
\documentclass{amsart}
\usepackage{hyperref}

\usepackage{amsrefs}


\linespread{1.0}

\usepackage{amsmath,amssymb,amsthm,amsfonts}

\usepackage{color}

\newtheorem{lemma}{Lemma}[section]
\newtheorem{theorem}{Theorem}[section]

\newtheorem{proposition}{Proposition}[section]
\newtheorem{remark}{Remark}[section]
\newtheorem{corollary}[theorem]{Corollary}

\numberwithin{equation}{section}

\newcommand{\Ndim}{n}

\newcommand{\dis}{\displaystyle}

\newcommand{\rmi}{{\rm i}}
\newcommand{\rmre}{{\rm Re}}

\newcommand{\id}{i}

\newcommand{\R}{\mathbb{R}}

\newcommand{\semiG}{\mathbb{A}}
\newcommand{\highG}{\Theta}
\newcommand{\highB}{\Lambda}
\newcommand{\sourceG}{g}
\newcommand{\solU}{f}
\newcommand{\wN}{\ell}

\DeclareMathOperator{\esssup}{ess\,sup}

\newcommand{\Ac}{A}

\newcommand{\vel}{v}
\newcommand{\dK}{K}

\newcommand{\sph}{{\mathbb S}^{\Ndim-1}}

\newcommand{\CurveP}{\zeta}
\newcommand{\Ctheta}{\vartheta}

\newcommand{\NgE}{{K \ge 2\ksob}}
\newcommand{\DgE}{\Ndim \ge 3}

\newcommand{\FM}{\mu}
\newcommand{\MM}{\sqrt{\mu}}

\newcommand{\FP}{\mathbf{P}}
\newcommand{\FL}{L}
\newcommand{\FI}{\mathbf{I}}

\newcommand{\CD}{\mathcal{D}}
\newcommand{\CE}{\mathcal{E}}
\newcommand{\CF}{\mathcal{F}}
\newcommand{\noCD}{\CD^0}

\newcommand{\CH}{\mathcal{H}}
\newcommand{\CHd}{\dot{\mathcal{H}}}

\newcommand{\CL}{\mathcal{L}}
\newcommand{\CN}{N(\FL)}

\newcommand{\AsoftI}{p(\Ndim)}
\newcommand{\Asoftq}{q(\Ndim)}
\newcommand{\jN}{j(\Ndim)}
\newcommand{\lN}{\ell(\Ndim)}

\newcommand{\na}{\nabla}
\newcommand{\al}{\alpha}
\newcommand{\be}{\beta}
\newcommand{\la}{\lambda}
\newcommand{\de}{\delta}
\newcommand{\pa}{\partial}
\newcommand{\ka}{\kappa}
\newcommand{\eps}{\epsilon}

\newcommand{\Ga}{\Gamma}

\newcommand{\eqdef}{\overset{\mbox{\tiny{def}}}{=}}
\newcommand{\opGstar}{\Gamma_{*}}

\newcommand{\weT}{q}

\newcommand{\teePLUSop}{T^{k}_{+}}
\newcommand{\teeSTARop}{T^{k}_{*}}


\newcommand{\Rap}{e}
\newcommand{\ArbI}{m}

\newcommand{\fcnP}{\hat{\rho}}

\newcommand{\wE}{j}
\newcommand{\wB}{b}

\newcommand{\SB}{\textbf{B}}

\newcommand{\threed}{{\mathbb R}^{\Ndim}}
\newcommand{\nsm}{|}

\newcommand{\ang}[1]{ \left< {#1} \right> }

\newcommand{\domain}{\mathbb{T}^\Ndim}

\newcommand{\ind}{ {\mathbf 1}}

\newcommand{\spacen}{N^{s,\gamma}}
\newcommand{\spaceELLn}{N^{s,\gamma}_\ell}
\newcommand{\ksob}{K^*_n}

\begin{document}

\title[Optimal time decay of the Boltzmann equation in $\threed_x$]
{Optimal time decay of the non cut-off Boltzmann equation in the whole space}

\author[R. M. Strain]{Robert M. Strain}
\address{
University of Pennsylvania, Department of Mathematics, David Rittenhouse Lab, 209 South 33rd Street, Philadelphia, PA 19104-6395, USA}
\email{strain at math.upenn.edu}
\urladdr{http://www.math.upenn.edu/~strain/}
\thanks{R.M.S. was partially supported by the NSF grant DMS-0901463, and an Alfred P. Sloan Foundation Research Fellowship.	}

\begin{abstract}
In this paper we study the large-time behavior of perturbative classical solutions to the hard and soft potential Boltzmann equation without the angular cut-off assumption in the whole space $\threed_x$ with $\DgE$.    We use the existence theory of global in time nearby Maxwellian solutions from \cite{gsNonCutA,gsNonCut0}.  
It has been a longstanding open problem to determine the large time decay rates for the soft potential Boltzmann equation in the whole space, with or without the angular cut-off assumption \cite{MR677262,MR2847536}.  For perturbative initial data, we prove that solutions converge to the global Maxwellian with the optimal large-time decay rate of  $O(t^{-\frac{\Ndim}{2}+\frac{\Ndim}{2r}})$ in the $L^2_\vel(L^r_x)$-norm for any $2\leq r\leq \infty$. 
\end{abstract}

\keywords{Kinetic Theory, Boltzmann equation, long-range interaction, non cut-off, soft potentials, hard potentials, fractional derivatives, time decay, convergence rates. \\
\indent 2010 {\it Mathematics Subject Classification.}  35Q20, 35R11, 76P05, 82C40, 35B65, 26A33}


%
\setcounter{tocdepth}{1}
\maketitle
\tableofcontents

\thispagestyle{empty}





\section{Introduction and main results}

In recent work the Boltzmann equation has been shown for the first time to have global in time classical perturbative solutions for physically realistic collision kernels in the case of the torus $(\domain_x)$ \cite{gsNonCutA,gsNonCut0} with $\Ndim \ge 2$ and in the whole space $(\R^3_x)$ case \cite{MR2847536}.  These results were able to successfully remove the  widespread (non-physical) ``Grad angular cut-off'' assumption in the context of perturbations.  The solutions on the torus exhibit exponential time decay $O(e^{-\la t})$ to equilibrium for the hard potentials and rapid polynomial decay  $O( t^{-k})$ for any $k>0$ in the case of the soft potentials.  This large time behavior is predicted by the celebrated Boltzmann H-theorem.  In the whole space, the presence of dispersion shackles the H-theorem to low order polynomial rates.  Even with angular cut-off \cite{MR677262}, it has remained a longstanding open problem to determine the  time decay rates for the soft potential kernels in $\threed_x$.  In this work, we prove optimal large-time decay rates for the full range of hard and soft potential collision kernels without angular cut-off.

We will study solutions to the {\em Boltzmann equation}, which is given by
  \begin{equation}
  \frac{\partial F}{\partial t} + v \cdot \nabla_x F = {\mathcal Q}(F,F),
  \quad
  F(0,x,\vel) = F_0(x,\vel).
  \label{BoltzFULL}
  \end{equation}
Here the unknown is $F=F(t,x,v)\ge 0$.  For each time $t\ge 0$, $F(t, \cdot, \cdot)$ represents the density of particles in phase space. The spatial coordinates we consider  are $x\in\threed_x$, and the velocities are $\vel\in\threed_v$ with 
$\Ndim \ge 2$.   The  {\em Boltzmann collision operator}, ${\mathcal Q}$,
is a bilinear operator which acts only on the velocity variables, $\vel$, as 
  \begin{equation*}
  {\mathcal Q} (G,F)(v) \eqdef 
  \int_{\mathbb{R}^n}  dv_* 
  \int_{\mathbb{S}^{n-1}}  d\sigma~ 
  B(v-v_*, \sigma) \, 
  \big[ G'_* F' - G_* F \big].
  \end{equation*} 
Here we are using the standard shorthand $F = F(v)$, $G_* = G(v_*)$, $F' = F(v')$, $G_*^{\prime} = G(v'_*)$. 
In this expression,  $v$, $v_*$ and $v'$, $v' _*$  are 
the velocities of a pair of particles before and after collision.  They are connected through the formulas
  \begin{equation}
  v' = \frac{v+v_*}{2} + \frac{|v-v_*|}{2} \sigma, \qquad
  v'_* = \frac{v+v_*}{2} - \frac{|v-v_*|}{2} \sigma,
  \qquad \sigma \in \mathbb{S}^{n-1}.
\notag 
  \end{equation}
The {\em Boltzmann collision kernel}, $B(v-v_*, \sigma)$, depends upon the {\em relative velocity} $|v-v_*|$ and on
the {\em deviation angle}  $\theta$ through 
$\cos \theta = ( v-v_* )\cdot \sigma/|v-v_*|$.  Without restriction we suppose that  $B(v-v_*, \sigma)$
is supported on $\cos \theta \ge 0$, as in \cite{MR1379589,gsNonCut0}.

\subsection*{The Collision Kernel}

Our assumptions are the following:
 \begin{itemize}
 \item 
We suppose that $B(v-v_*, \sigma)$ takes product form in its arguments as
\begin{equation}
B(v-v_*, \sigma) =\Phi( |v-v_*| ) \, b(\cos \theta).
\notag
\end{equation}
It generally holds that both $b$ and $\Phi$ are non-negative functions. 
 
  \item The angular function $t \mapsto b(t)$ is not locally integrable; for $c_b >0$ it satisfies
\begin{equation}
\frac{c_b}{\theta^{1+2s}} 
\le 
\sin^{\Ndim -2} \theta  ~ b(\cos \theta) 
\le 
\frac{1}{c_b\theta^{1+2s}},
\quad
s \in (0,1),
\quad
   \forall \, \theta \in \left(0,\frac{\pi}{2} \right].
   \label{kernelQ}
\end{equation}
 
  \item The kinetic factor $z \mapsto \Phi(|z|)$ satisfies for some $C_\Phi >0$
\begin{equation}
\Phi( |v-v_*| ) =  C_\Phi  |v-v_*|^\gamma , \quad \gamma  \ge -2s.
\label{kernelP}
\end{equation}
In the rest of this paper these will be called ``hard potentials.'' 

  \item Our results will also apply to the more singular situation
\begin{equation}
\Phi( |v-v_*| ) =  C_\Phi  |v-v_*|^\gamma , \quad  -2s > \gamma  > -n. 
\label{kernelPsing}
\end{equation}
These will be called ``soft potentials'' throughout this paper.

 \end{itemize}
 
These collision kernels are physically motivated since they can be derived from a spherical intermolecular repulsive potential such as 
$
\phi(r)=r^{-(p-1)}
$
with $p \in (2,+\infty)$ as was shown by Maxwell in 1866.  In the physical dimension $(n=3)$,  $B$ satisfies the conditions above with $\gamma = (p-5)/(p-1)$ and $s = 1/(p-1)$;  see \cite{MR1942465}.  A large amount of previous work requires the Grad angular cut-off assumption which usually means 
 either $b(\cos\theta) \in L^\infty(\sph)$ or $b(\cos\theta) \in L^1(\sph)$.  However neither of these assumptions are satisfied for angular factors such as \eqref{kernelQ}.
 
 We will study the linearization of \eqref{BoltzFULL} around the Maxwellian equilibrium states 
\begin{equation}
F(t,x,v) = \FM(v)+\sqrt{\FM(v)} f(t,x,v),
\label{maxLIN}
\end{equation}
where without loss of generality the Maxwellian is given by
$$
\FM(v) \eqdef (2\pi)^{-n/2}e^{-|v|^2/2}.
$$
We linearize the Boltzmann equation \eqref{BoltzFULL} around \eqref{maxLIN}.  This grants an equation for the perturbation, $f(t,x,v)$, that is given by
\begin{gather}
 \partial_t f + v\cdot \nabla_x f + \FL (f)
=
\Ga (f,f),
\quad
f(0, x, v) = f_0(x,v),
\label{Boltz}
\end{gather}
where the {\it linearized Boltzmann operator}, $\FL$, is defined as
$$
 \FL(g)
 \eqdef  
- \FM^{-1/2}\mathcal{Q}(\FM ,\MM g)- \FM^{-1/2}\mathcal{Q}(\MM g,\FM),
$$
and the bilinear operator, $\Gamma$, is then
\begin{gather}
\Gamma (g,h)
\eqdef
 \FM^{-1/2}\mathcal{Q}(\MM g,\MM h).
\label{gamma0}
\end{gather}
The $(\Ndim+2)$-dimensional null space of $\FL$ is well known \cite{MR1379589}: 
\begin{equation}\label{nullLsp}
    \CN \eqdef {\rm span}\left\{\sqrt{\FM}, ~  \vel_1\sqrt{\FM}, ~\ldots, ~  \vel_\Ndim\sqrt{\FM}, ~   (|\vel|^2-\Ndim)\sqrt{\FM} \right\}.
\end{equation}
Now, for fixed $(t,x)$, we define the orthogonal projection from
$L^2_\vel$ to $\CN$ as
\begin{equation}
 \FP f=a^f(t,x)\sqrt{\FM}
    +\sum_{i=1}^{\Ndim}
    b^f_i(t,x)\vel_i\sqrt{\FM}+c^f(t,x)(|\vel|^2-\Ndim)\sqrt{\FM}, \label{form.p}
\end{equation}
where the functions $a^f$, $b^f\eqdef [b_1,\cdots,b_n]$ and $c^f$ will depend on $\solU(t,x,\vel)$.

 Our main interest is in the large-time behavior of global classical solutions of the Cauchy problem for the Boltzmann equation \eqref{BoltzFULL} which are perturbations of the Maxwellian equilibrium states \eqref{maxLIN} for the long-range collision kernels \eqref{kernelQ}, \eqref{kernelP} and \eqref{kernelPsing}.  This behavior is controlled by the celebrated Boltzmann $H$-theorem. 
 
We define the Boltzmann H-functional by
$$
{H}(t)\eqdef-\int_{\threed}dx~ \int_{\threed}dv~ f(t,x,\vel) \log f(t,x,\vel).
$$
Then the Boltzmann H-theorem predicts that, for solutions of the Boltzmann equation, the entropy is increasing over time;  this corresponds to the formal statement
$$
\frac{d{H}(t)}{dt} = \int_{\threed} dx ~ D(f,f)  \ge 0.
$$
This is a part of the second law of thermodynamics.    Here $D(f,f)$ is the well-known ``entropy production functional'' which does not operate on $(t,x)$.  This shows that the H-theorem is degenerate in $(t,x)$ because characterizations of 
$D(f,f)$ will be local in those variables,
and so the transport terms in \eqref{BoltzFULL} become important.  Of course this is an extremely formal calculation, even more so in the whole space where the H-functional is infinity at any non-zero global Maxwellian.

In recent work \cite{gsNonCut0,gsNonCutA},
Gressman and the author have introduced into the Boltzmann theory the 
following sharp weighted geometric fractional  Sobolev norm:
\begin{equation} 
\nsm f\nsm_{\spacen}^2 
\eqdef 
\nsm f\nsm_{L^2_{\gamma+2s}}^2 + 
\int_{\mathbb{R}^n} dv \int_{\mathbb{R}^n} dv' ~
(\ang{v}\ang{v'})^{\frac{\gamma+2s+1}{2}}
 \frac{(f' - f)^2}{d(v,v')^{n+2s}} 
\ind_{d(v,v') \leq 1}. \label{normdef} 
\end{equation}
Generally, $\ind_{A}$ is the standard indicator function of the set $A$.
Now this space includes the weighted $L^2_\wN$ space, for $\wN\in\mathbb{R}$, with norm given by
$$
\nsm f\nsm_{L^2_{\wN}}^2 
\eqdef
\int_{\mathbb{R}^n} dv ~ 
\ang{v}^{\wN}
~
|f(v)|^2.
$$
The weight is
$
\ang{v}
\eqdef \sqrt{1+|v|^2}.
$
The fractional differentiation effects are measured 
using the anisotropic metric $d(v,v')$ on the ``lifted'' paraboloid (in $\R^{\Ndim+1}$) as
$$
d(v,v') \eqdef \sqrt{ |v-v'|^2 + \frac{1}{4}\left( |v|^2 -  |v'|^2\right)^2}.
$$
This metric encodes the nonlocal anisotropic changes in the power of the weight. 

 The linearized collision operator $\FL$ is non-negative and it is furthermore locally coercive in
the sense that there is a constant $\la>0$ such that \cite[Theorem 8.1]{gsNonCut0}:  
\begin{equation}\label{coerc}
\rmre \ang{g, \FL g}
=
\rmre \int_{\threed} dv ~ g(v) \cdot \overline{L(g)}(v)
\geq 
\la\nsm\{\FI-\FP\}g\nsm^2_{\spacen}.
\end{equation}
This may be interpreted loosely as a linearized statement of the H-theorem.  Note that the norm $\spacen$ provides a sharp  characterization of the linearized collision operator \cite[(2.13)]{gsNonCut0}; in earlier work \cite{MR2322149} the sharp gain of velocity weight in $L^2$ was established for the non-derivative part of \eqref{normdef}.  $\spacen$ also controls sharply the nonlinear collision operator, and its entropy production estimates \cite{gsNonCutEst} for $D(f,f)$.  However all of these coercive estimates are degenerate in the $(t,x)$ variables.

In a bounded domain such as the torus ($\domain_x$), the H-theorem is prominent and then rapid convergence can be established (as in \cite{gsNonCutA,gsNonCut0}).  However in the whole space ($\threed_x$) the dispersive effects dominate the H-theorem, and so the transport terms in \eqref{Boltz} restrict the convergence to low order polynomial rates.  This point of view illustrates the additional difficulty involved in proving decay rates in the whole space.
We state these time decay rates in our main theorems of Section \ref{sec:main}.  Prior to that we introduce the notation.

\subsection{Notation}

 For any $m\geq 0$, we use $H^m$ to denote the
usual Sobolev spaces $H^m(\threed_x\times\threed_\vel)$, $H^m(\threed_x)$, or  $H^m(\threed_\vel)$, respectively, where for example
$$
H^m_{\ell}(\threed_\vel)
\eqdef
\left\{ f \in L^2_\ell (\threed_\vel) : \nsm f \nsm_{H^m_{\ell}(\threed)}^2 
\eqdef \int_{\threed} dv \ang{v}^\ell \left| (I - \Delta_v)^{m/2} f(v) \right|^2 <\infty
\right\}.
$$
Then we denote $H^m_{0} = H^m$.
For a Banach space $X$,
we let $\|\cdot\|_{X}$ denote the corresponding norm over $X(\threed_x\times\threed_\vel)$, 
and 
$\nsm\cdot\nsm_{X}$ analogously denotes the norm only over $X(\threed_\vel)$.  For example we use the notation
$$
\| h\|_{\spacen}^2
=
\| h\|_{\spacen(\threed_x\times\threed_\vel)}^2
\eqdef
\left\|~ \nsm h\nsm_{\spacen(\threed_\vel)} ~ \right\|_{L^2(\threed_x)}^2.
$$
Furthermore $X(B)$ denotes the Banach space $X$ over the domain $B\subset \threed$.  In particular, we will use the notation $B_R$ to denote the $\Ndim$ dimensional ball of radius $R>0$ centered at the origin.  
Sometimes further $L^2_x$ and $L^2_\vel$ are
used to denote $L^2(\threed_x)$ and $L^2(\threed_\vel)$
respectively.  There should be no confusion between $L^2_x$, $L^2_\vel$ and $L^2_\ell$, etc, since $x$ and $\vel$ are never used to denote a weight.

For an integrable function $g: \threed\to\R$, its Fourier transform is defined by
\begin{equation*}
  \widehat{g}(k)= \CF g(k)\eqdef \int_{\threed} e^{-2\pi \rmi x\cdot k} g(x)dx, \quad
  x\cdot
   k\eqdef\sum_{j=1}^\Ndim x_jk_j,
   \quad
   k\in\threed,
\end{equation*}
where $\rmi =\sqrt{-1}\in \mathbb{C}$. For two
complex vectors $a,b\in\mathbb{C}^\Ndim$, $(a\mid b)=a\cdot
\overline{b}$ denotes the dot product over the complex field, where
$\overline{b}$ is the ordinary complex conjugate of $b$.

We use $\langle\cdot,\cdot\rangle$ to denote the inner product over the Hilbert space $L^2_\vel$, i.e.
\begin{equation*}
    \langle g,h\rangle=\int_{\threed} g(\vel)\cdot \overline{h(\vel)} ~ d\vel,\ \ g=g(\vel), ~h=h(\vel)\in
    L^2_\vel.
\end{equation*}
Analogously $\left(\cdot,\cdot\right)$ denotes the inner product over $L^2(\threed_x\times\threed_\vel)$.
For $r\geq 1$, we define the mixed Lebesgue
space $Z_r=L^2_\vel(L^r_x)=L^2(\threed_\vel;L^r(\threed_x))$ with the norm
\begin{equation*}
\|g\|_{Z_r}\eqdef \left(\int_{\threed}\left(\int_{\threed}
    |g(x,\vel)|^rdx\right)^{2/r}d\vel\right)^{1/2}.
\end{equation*}
We introduce the norms
$\|\cdot\|_{\CHd^m}$ and $\|\cdot\|_{\CH^m}$ with $m\geq 0$ given by
\begin{equation}\label{brief.norm}
    \|\solU\|_{\CHd^m}^2\eqdef \|\solU\|_{L^2_\vel(\dot{H}^m_x)}^2,
    \quad
    \|\solU\|_{\CH^m}^2\eqdef \|\solU\|_{L^2_\vel(H^m_x)}^2,
    \quad
    \|\solU\|_{\CL^2}^2 \eqdef\|\solU\|_{\CH^0}^2.
\end{equation}
Here $\dot{H}^m_x=\dot{H}^m(\threed_x)$ is the standard homogeneous $L^2_x$ based Sobolev space:
$$
\| g \|_{\dot{H}^m(\threed_x)}^2 \eqdef \int_{\threed} dk ~ |k|^{2m} | \hat{g}(k)|^2.
$$
We also define the unified weight function as follows
\begin{equation}
w(v) 
\eqdef 
\left\{
\begin{array}{ccc}
\ang{v}, &\gamma + 2s \ge 0, & \text{hard potentials: \eqref{kernelP}}, \\
\ang{v}^{-\gamma - 2s}, & \gamma + 2s < 0, & \text{soft potentials: \eqref{kernelPsing}.}\\
\end{array}
\right.
\label{weigh}
\end{equation}
We then consider the weighted anisotropic derivative space as in \eqref{normdef}: 
 \begin{gather*}
\nsm h \nsm^2_{{\spaceELLn}}
\eqdef 
\nsm w^{\ell} h\nsm_{L^2_{\gamma+2s}}^2 + \int_{\mathbb{R}^n} dv ~ 
\ang{v}^{\gamma+2s+1} w^{2\ell}(v)
\int_{\mathbb{R}^n} dv' 
~
 \frac{(h' - h)^2}{d(v,v')^{n+2s}} 
\ind_{d(v,v') \leq 1}.
\end{gather*}
Note that  
$
\nsm h \nsm_{{\spacen}}
=
\nsm h \nsm_{{\spacen_0}}.
$
For
multi-indices, we denote
\begin{equation*}
 \pa^{\al}_\be=\pa_{x_1}^{\al_1}\cdots\pa_{x_\Ndim}^{\al_\Ndim}
    \pa_{\vel_1}^{\be_1}\cdots\pa_{\vel_\Ndim}^{\be_\Ndim},
    \quad
    \al=[\al_1,\ldots,\al_\Ndim],
    \quad
\be=[\be_1,\ldots,\be_\Ndim].
\end{equation*}
The length of $\al$ is $|\al|=\al_1+\cdots+\al_\Ndim$ and the length of
$\be$ is $|\be|=\be_1+\cdots+\be_\Ndim$.

Fix $\wN\ge 0$.
Given a solution, $\solU(t,x,\vel)$, to the Boltzmann equation \eqref{Boltz}, we define an instant energy functional to be a continuous function, 
 $\CE_{\dK,\wN}(t)$, which satisfies
\begin{equation}
\CE_{\dK,\wN}(t)\approx
\sum_{|\al|+|\be|\leq \dK}\|w^{\wN-|\be|}\pa^\al_\be \solU(t)\|_{\CL^2}^2.
\label{def.eNm}
\end{equation}
Also define the high-order instant energy functional $\CE_{\dK,\wN}^{\rm h}(t)$ as
\begin{equation}
\CE_{\dK,\wN}^{\rm h}(t)\approx
 \sum_{1\leq |\al|\leq \dK}\|w^{\wN}\pa^\al \solU(t)\|_{\CL^2}^2+\sum_{|\al|+|\be|\leq \dK}\|w^{\wN-|\be|}\pa^\al_\be  \{\FI-\FP\}\solU(t)\|^2_{\CL^2}.
 \label{def.eNm.h}
\end{equation}
We furthermore define the dissipation rate $\CD_{\dK,\wN}(t)$ as
\begin{equation}
\CD_{\dK,\wN}(t)
\eqdef
\sum_{1\leq |\al|\leq \dK}\|\pa^\al \solU(t)\|_{\spacen_\wN}^2+\sum_{|\al|+|\be|\leq \dK}\|\pa^\al_\be \{\FI-\FP\}\solU(t)\|_{\spacen_{\wN-|\be|}}^2.
\label{def.dNm}
\end{equation}
For brevity, when $\wN=0$ we write $\CE_{\dK}(t)=\CE_{\dK,0}(t)$, $\CE_{\dK}^{\rm h}(t)=\CE_{\dK,0}^{\rm h}(t)$ and $\CD_{\dK}(t)=\CD_{\dK,0}(t)$.  We suppose once and for all that $\dK$ is an integer satisfying $\NgE$, where $\ksob \eqdef \lfloor \frac{n}{2} +1 \rfloor$ is the smallest integer which is strictly greater than $\frac{\Ndim}{2}$.

Throughout this paper  we let $C$  denote
some positive (generally large) inessential constant and $\la$ denotes some positive (generally small) inessential constant, where both $C$ and
$\la$ may change values from line to line. 
Furthermore $A \lesssim B$ means $A \le C B$, and 
$A \gtrsim B$ means $B \lesssim A$.
In addition,
$A\approx B$ means $A \lesssim B$ and $B \lesssim A$.

\subsection{Main results}\label{sec:main}
In this subsection we state our main optimal time decay results for the Boltzmann equation \eqref{Boltz}.  We begin with the following existence result, and Lyapunov inequalities, based on the theory from \cite{gsNonCut0}.

\begin{theorem}\label{thm.energy}
Fix $\wN \ge 0$ and $\solU_0(x,\vel)$.  
There are $\CE_{\dK,\wN}(t)$, $\CD_{\dK,\wN}(t)$ such that if $\CE_{\dK,\wN}(0)$ is
sufficiently small, then the Cauchy problem to
the Boltzmann equation \eqref{Boltz} admits a unique global solution
$\solU(t,x,\vel)$ satisfying the Lyapunov inequality
\begin{eqnarray}
\frac{d}{dt}\CE_{\dK,\wN}(t)+\la \CD_{\dK,\wN}(t)\leq 0,
\quad \forall t\geq 0.
\label{thm.energy.1}
\end{eqnarray}
Here $\la >0$ may depend on $\wN$.  In addition there is $\CE_{\dK,\wN}^{\rm h}(t)$
such that  
\begin{equation}\label{thm.energy.2}
\frac{d}{dt}\CE_{\dK,\wN}^{\rm h}(t)+\la \CD_{\dK,\wN}(t)
\lesssim
 \|\na_x\FP \solU(t)\|_{\CL^2}^2,
\quad \forall t\geq 0.
\end{equation}
Furthermore, if
$
 F_{0}
 =\FM+\MM\solU_{0}
 \geq 0,
$
then
$
F(t,x,\vel)
=\FM+\MM\solU
(t,x,\vel)
\geq 0.
$
\end{theorem}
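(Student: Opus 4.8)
The plan is to run the weighted energy method for the perturbation equation \eqref{Boltz}, following the torus analysis of \cite{gsNonCutA,gsNonCut0} but replacing every appeal to the Poincar\'e inequality by an estimate that recovers only the spatial gradient of the macroscopic part $\FP\solU$; this same split is what will later force the hierarchy between $\CE_{\dK,\wN}$ and $\CE_{\dK,\wN}^{\rm h}$ used for the decay rates.

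I would first derive the basic a priori inequality. For each $|\al|+|\be|\le\dK$, apply $\pa^\al_\be$ to \eqref{Boltz} and pair the result in $L^2(\threed_x\times\threed_\vel)$ against $w^{2(\wN-|\be|)}\pa^\al_\be\solU$. The transport operator $\vel\cdot\na_x$ is skew-adjoint when $|\be|=0$; for $|\be|\ge1$ it generates the commutator $[\pa_\be,\vel\cdot\na_x]\solU$, which trades one velocity derivative for one $x$-derivative and respects the weight drop $w^{\wN-|\be|}\le w^{\wN-(|\be|-1)}$ (here $w\ge1$), hence is absorbable into the dissipation \eqref{def.dNm} with a small constant. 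The linear term, through the weighted extension of the coercivity bound \eqref{coerc} proved in \cite{gsNonCut0}, contributes the microscopic dissipation $\la\sum_{|\al|+|\be|\le\dK}\|\pa^\al_\be\{\FI-\FP\}\solU\|^2_{\spacen_{\wN-|\be|}}$, up to lower-order terms supported on a compact velocity set that are absorbed afterwards. The nonlinear contribution $\pa^\al_\be\Ga(\solU,\solU)$ is controlled by the trilinear non-cut-off estimates of \cite{gsNonCut0,gsNonCutEst} together with the Sobolev embedding $H^{\ksob}(\threed_x)\hookrightarrow L^\infty(\threed_x)$, available since $\ksob>\Ndim/2$ and $\NgE$, yielding a bound of the form $\sqrt{\CE_{\dK,\wN}(t)}\,\CD_{\dK,\wN}(t)$. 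Summing over multi-indices then gives
\[
\frac{d}{dt}\sum_{|\al|+|\be|\le\dK}\|w^{\wN-|\be|}\pa^\al_\be\solU(t)\|^2_{\CL^2}+\la\,\widetilde{\CD}_{\dK,\wN}(t)\le C\sqrt{\CE_{\dK,\wN}(t)}\,\CD_{\dK,\wN}(t),
\]
where $\widetilde{\CD}_{\dK,\wN}$ denotes $\CD_{\dK,\wN}$ with the purely macroscopic block $\sum_{1\le|\al|\le\dK}\|\pa^\al\FP\solU\|^2_{\CL^2}$ deleted.

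The heart of the matter, and the step I expect to be the main obstacle, is recovering that macroscopic dissipation in the absence of a Poincar\'e inequality. Writing $\FP\solU=(a^f,b^f,c^f)$ as in \eqref{form.p}, I would test \eqref{Boltz} against $\sqrt{\FM}$, $\vel_i\sqrt{\FM}$, $(|\vel|^2-\Ndim)\sqrt{\FM}$ and a few higher moments of $\sqrt{\FM}$ to obtain the local conservation laws together with auxiliary elliptic-type identities expressing the $x$-derivatives of order $\ge1$ of $(a^f,b^f,c^f)$ through time derivatives of $(a^f,b^f,c^f)$, through velocity moments of $\pa^\al\{\FI-\FP\}\solU$, and through velocity moments of $\pa^\al\Ga(\solU,\solU)$. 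Mimicking the Guo-type interaction construction, but built purely from the $x$-derivatives of $(a^f,b^f,c^f)$ of order $\ge1$ (since $\|(a^f,b^f,c^f)\|_{L^2_x}$ is not controlled on $\threed_x$), I would assemble a functional $\mathcal{G}_{\dK}(t)$ with $|\mathcal{G}_{\dK}(t)|\lesssim\CE_{\dK,\wN}(t)$ and
\[
\frac{d}{dt}\mathcal{G}_{\dK}(t)+\la\sum_{1\le|\al|\le\dK}\|\pa^\al\FP\solU(t)\|^2_{\CL^2}\le C\,\widetilde{\CD}_{\dK,\wN}(t)+C\,\CE_{\dK,\wN}(t)\,\CD_{\dK,\wN}(t).
\]
The delicate points here are the anisotropic velocity weight $w$, which for the soft potentials \eqref{kernelPsing} is the negative power $\ang{v}^{-\gamma-2s}$ and must be carried through every moment bound, and checking that each velocity moment of $\Ga(\solU,\solU)$ that appears is dominated by $\CD_{\dK,\wN}$ once smallness is invoked.

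Finally, setting $\CE_{\dK,\wN}(t)\eqdef\sum_{|\al|+|\be|\le\dK}\|w^{\wN-|\be|}\pa^\al_\be\solU(t)\|^2_{\CL^2}+\ka\,\mathcal{G}_{\dK}(t)$ for a small $\ka>0$ makes $\CE_{\dK,\wN}(t)\approx\sum_{|\al|+|\be|\le\dK}\|w^{\wN-|\be|}\pa^\al_\be\solU(t)\|^2_{\CL^2}$ as demanded by \eqref{def.eNm}, and adding $\ka$ times the second display to the first produces $\frac{d}{dt}\CE_{\dK,\wN}+\la\CD_{\dK,\wN}\le C\sqrt{\CE_{\dK,\wN}}\,\CD_{\dK,\wN}$; a continuity argument together with local-in-time existence (by the usual iteration scheme and the same trilinear estimates, as in \cite{gsNonCutA,gsNonCut0}) propagates the smallness of $\CE_{\dK,\wN}(0)$, renders the right-hand side absorbable, and yields global existence, uniqueness, and \eqref{thm.energy.1}. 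For \eqref{thm.energy.2} I would rerun the scheme with the energy built from \eqref{def.eNm.h}, i.e.\ from $\pa^\al\solU$ with $1\le|\al|\le\dK$ and from all $\pa^\al_\be\{\FI-\FP\}\solU$; the source $\pa_t\FP\solU$ entering the equation for $\{\FI-\FP\}\solU$ produces the term $\|\na_x\FP\solU\|^2_{\CL^2}$ on the right (the nonlinear terms being absorbed via the smallness already established), and the matching interaction functional, being formable only from the $x$-derivatives of $(a^f,b^f,c^f)$ of order $\ge1$, recovers $\sum_{2\le|\al|\le\dK}\|\pa^\al\FP\solU\|^2_{\CL^2}$ but not the bottom rung $\|\na_x\FP\solU\|^2_{\CL^2}$, which must therefore remain on the right-hand side --- giving exactly \eqref{thm.energy.2}. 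The positivity assertion is treated separately: for an angular cut-off approximation $b_k$ of $b$ (with $k\to\infty$) it follows from the representation of $F$ along characteristics, schematically $F=F_0\,e^{-\int_0^t R_k(F)}+\int_0^t e^{-\int_\tau^t R_k(F)}\,\mathcal{Q}^+_k(F,F)\,d\tau$ with $R_k(F)\ge0$, which is preserved along the approximating iteration scheme; the general non-cut-off case then follows by passing to the limit $k\to\infty$, under which the solutions converge (cf.\ \cite{gsNonCut0}).
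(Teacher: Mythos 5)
The broad architecture of your proposal is right: unweighted energy estimates in $x$-derivatives plus a Kawashima/Guo-type interaction functional to recover the purely macroscopic dissipation without a Poincar\'e inequality, weighted estimates for the remaining pieces, a smallness closure, and positivity via cut-off approximation and passage to the limit. But there is a genuine gap at the very first step, and one blurred point worth flagging.

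The gap is in your opening paragraph, where you propose to derive the basic a priori inequality by applying $\pa^\al_\be$ directly to \eqref{Boltz} and pairing against $w^{2(\wN-|\be|)}\pa^\al_\be\solU$ for \emph{all} $|\al|+|\be|\le\dK$. This works for $|\al|\ge 1$, $\be=0$, but when $\al=0$ (whether $\be=0$ or $|\be|\ge 1$) the weighted coercivity of $\FL$ from \cite[Lemma 2.6]{gsNonCut0} returns an error term of the form $\nsm g\nsm_{L^2(B_{C})}^2$ with $g=\pa_\be\solU$, and this contains the \emph{macroscopic} contribution $\|\FP\solU\|_{\CL^2}^2$ with no spatial derivative, which is exactly what the dissipation \eqref{def.dNm} cannot control in $\threed_x$ --- the whole-space Poincar\'e obstruction resurfacing on the velocity-weight side. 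The same difficulty arises from the cross pairings $\langle w^{2(\wN-|\be|)}\FL(\pa_\be\{\FI-\FP\}\solU),\pa_\be\FP\solU\rangle$, whose bound involves $\|\FP\solU\|_{\CL^2}$ with no derivative. Your interaction functional cannot rescue these: it lives inside the energy budget, and these uncontrolled quantities sit on the right of the Lyapunov inequality. The paper avoids this by \emph{first projecting the equation}: for $|\al|\le\dK-1$, $\be=0$, and for all $|\be|\ge 1$, the weighted estimates are derived from $\{\FI-\FP\}(\text{Boltz})$ paired against $w^{2(\wN-|\be|)}\pa^\al_\be\{\FI-\FP\}\solU$ (see \eqref{app.vw.p05.int1} and \eqref{app.vw.p08}); then the coercivity error is $\nsm\pa^\al\{\FI-\FP\}\solU\nsm_{L^2(B_C)}^2$, which is controlled. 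Only at the single top order $|\al|=\dK$, $\be=0$, where the extra $x$-derivative absorbs the error, does the paper pair against the weighted full solution (\eqref{app.vw.pp05}). The unweighted $x$-derivative-only estimate \eqref{zero.est} together with the interaction functional via \eqref{coerc.m} then supplies the macroscopic block of the dissipation. The final energy is the linear combination of these four pieces, which is equivalent to $\sum_{|\al|+|\be|\le\dK}\|w^{\wN-|\be|}\pa^\al_\be\solU\|_{\CL^2}^2$ but is not that quantity itself; a Lyapunov inequality for an equivalent quantity does not transfer, so you cannot simply redefine the energy afterward. You need this splitting.

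A smaller point: for \eqref{thm.energy.1} the interaction functional $\mathcal{I}(t)$ in the paper does include the $\al=0$ block, and may, because $\|\solU\|_{\CL^2}^2$ is part of $\CE_{\dK,\wN}(t)$. The restriction to $1\le|\al|$ is needed only for the modified $\tilde{\mathcal{I}}(t)$ entering \eqref{thm.energy.2}, where the bottom term $\|\solU\|_{\CL^2}^2$ is absent from $\CE^{\rm h}_{\dK,\wN}$ and the absorption \eqref{key.inter.est} would otherwise fail. Your description of ``$x$-derivatives of order $\ge1$'' blurs these two cases. Your positivity sketch is consistent in spirit with the paper's deferral to \cite{gsNonCut0}.
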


The nonlinear energy estimates in Theorem \ref{thm.energy} together with time-decay estimates on the linearized
 system in Theorem \ref{thm.ls} lead us to the time-decay rates of
the instant energy functionals $\CE_{\dK,\wN}(t)$,
$\CE_{\dK}^{\rm h}(t)$ and the $Z_r$ norms  if we make additional integrability assumptions on the initial data. Precisely, for $\wN\ge0$, set $\eps_{\dK,\wN}$ 
to be
\begin{equation}\label{def.id.jm}
    \eps_{\dK,\wN}\eqdef\CE_{\dK,\wN}(0)+\|\solU_0\|_{Z_1}^2.
\end{equation}
Our main optimal time decay result is as follows:

\begin{theorem}\label{thm.ns}
Let $\solU(t,x,\vel)$ be the solution to the Cauchy problem of the Boltzmann equation \eqref{Boltz} obtained in Theorem \ref{thm.energy}. Suppose $\eps_{\dK,\wN+ \AsoftI}$ is sufficiently small where 
$\AsoftI =0$ for the hard potentials \eqref{kernelP} and for the soft potentials \eqref{kernelPsing}
we take any number $\AsoftI> \frac{\Ndim}{2}$.  Then for any $t\geq 0$ we uniformly have
\begin{equation}\label{thm.ns.2}
    \CE_{\dK,\wN}(t)\lesssim \eps_{\dK,\wN+\AsoftI} (1+t)^{-\frac{\Ndim}{2}}.
\end{equation}
Suppose further that $\eps_{\dK,\lN+\AsoftI}$ is sufficiently small, 
where the weight factor $\lN$  is defined, for any small $\varepsilon>0$,  as follows
\begin{equation} 
\lN
\eqdef 
\left\{
\begin{array}{ccc}
2(\gamma+2s), & \text{ for the hard potentials: \eqref{kernelP}}, \\
 \frac{\Ndim}{2}+\ksob+1 + \varepsilon, &  \text{for the soft potentials: \eqref{kernelPsing}.}\\
\end{array}
\right.
\label{exponent.ln}
\end{equation}
Then for any $2\leq r\leq \infty$, we have the following  estimate:  
\begin{equation} 
 \|\solU(t)\|_{Z_r}\lesssim(1+t)^{-\frac{\Ndim}{2}+\frac{\Ndim}{2r}},
 \label{cor.1}
\end{equation}
which holds uniformly over $t \ge 0$.
\end{theorem}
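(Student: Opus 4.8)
The plan is to combine the nonlinear energy estimates of Theorem~\ref{thm.energy} with the linear time--decay estimates of Theorem~\ref{thm.ls} through a Duhamel representation and a continuity (bootstrap) argument. Let $\semiG(t)$ denote the solution operator of the linearized equation $\pa_t\solU+\vel\cdot\na_x\solU+\FL(\solU)=0$, so that Duhamel applied to \eqref{Boltz} gives
\begin{equation*}
\solU(t)=\semiG(t)\solU_0+\int_0^t\semiG(t-\tau)\,\Ga(\solU,\solU)(\tau)\,d\tau,
\end{equation*}
and we shall use that the conservation laws imply $\FP\,\Ga(\solU,\solU)=0$. We assume Theorem~\ref{thm.ls} delivers the free--transport dispersive decay $\|\semiG(t)h\|_{Z_r}\lesssim(1+t)^{-\frac\Ndim2+\frac\Ndim{2r}}\big(\|h\|_{Z_1}+\|w^{\AsoftI}h\|_{\CL^2}\big)$, the extra weight $w^{\AsoftI}$ being present only in the soft case and accounting for the loss $\AsoftI>\tfrac\Ndim2$ imposed on the data.

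First we prove the decay of $\CE_{\dK,\wN}$. Set $\mathcal M(t)\eqdef\sup_{0\le\tau\le t}(1+\tau)^{\Ndim/2}\CE_{\dK,\wN}(\tau)$. In the hard--potential case $\CD_{\dK,\wN}$ controls every term of $\CE_{\dK,\wN}$ except the zeroth--order hydrodynamic piece, so $\CE_{\dK,\wN}\lesssim\CD_{\dK,\wN}+\|\FP\solU\|_{\CL^2}^2$ and \eqref{thm.energy.1} becomes $\tfrac{d}{dt}\CE_{\dK,\wN}+\la\CE_{\dK,\wN}\lesssim\|\FP\solU\|_{\CL^2}^2$. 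Estimating $\|\FP\solU(t)\|_{\CL^2}$ from the Duhamel formula in $\CL^2$, using a bilinear bound $\|\Ga(\solU,\solU)\|_{Z_1}\lesssim\|\solU\|_{\CL^2}\big(\|\solU\|_{\CL^2}+\|\solU\|_{\spacen}\big)$, the smallness of $\CE_{\dK,\wN}$, and the Lyapunov bound $\int_0^\infty\CD_{\dK,\wN}\,d\tau\lesssim\eps_{\dK,\wN}$, we obtain $\|\FP\solU(t)\|_{\CL^2}^2\lesssim(1+t)^{-\Ndim/2}(\eps_{\dK,\wN}+\mathcal M(t)^2)$; plugging this into the differential inequality and integrating yields $\mathcal M(t)\lesssim\eps_{\dK,\wN}+\mathcal M(t)^2$, hence \eqref{thm.ns.2} for small data. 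In the soft--potential case $\CD_{\dK,\wN}$ is strictly weaker than $\CE_{\dK,\wN}$ because the velocity weight in the dissipation degenerates as $|\vel|\to\infty$ (since $\gamma+2s<0$); we then replace the comparison above by an interpolation
\begin{equation*}
\CE_{\dK,\wN}\lesssim\CD_{\dK,\wN}^{\,\theta}\,\CE_{\dK,\wN+\AsoftI}^{\,1-\theta}+\|\FP\solU\|_{\CL^2}^2,\qquad\theta=\theta(\AsoftI)\in(0,1),
\end{equation*}
carry along the bound on $\CE_{\dK,\wN+\AsoftI}(t)$ furnished by Theorem~\ref{thm.energy} applied at weight $\wN+\AsoftI$, and run the same bootstrap; the hypothesis $\AsoftI>\tfrac\Ndim2$ is precisely what makes $\theta$ large enough for the iteration to reach the optimal exponent $\tfrac\Ndim2$ rather than a smaller one.

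Next we deduce the $Z_r$ rates. It suffices to treat $r=2$ and $r=\infty$, the intermediate exponents following from $\|\solU\|_{Z_r}\le\|\solU\|_{Z_2}^{2/r}\|\solU\|_{Z_\infty}^{1-2/r}$. The case $r=2$ is immediate from the previous step, since $\|\solU\|_{Z_2}^2=\|\solU\|_{\CL^2}^2\le\CE_{\dK,\wN}(t)\lesssim(1+t)^{-\Ndim/2}$. For $r=\infty$, Theorem~\ref{thm.ls} gives $\|\semiG(t)\solU_0\|_{Z_\infty}\lesssim(1+t)^{-\Ndim/2}$, while the nonlinear term is bounded by
\begin{equation*}
\Big\|\int_0^t\semiG(t-\tau)\,\Ga(\solU,\solU)(\tau)\,d\tau\Big\|_{Z_\infty}\lesssim\int_0^t(1+t-\tau)^{-\Ndim/2}\,\|\Ga(\solU,\solU)(\tau)\|_{Z_1}\,d\tau.
\end{equation*}
A bilinear estimate $\|\Ga(\solU,\solU)\|_{Z_1}\lesssim\|\solU\|_{\CL^2}\,\|w^{?}\solU\|_{\CL^2}$, with the velocity weight dictated by the non cut-off operator $\Ga$, together with the decay from the previous step used at weight levels $0$ and $\lN$ (which is why $\eps_{\dK,\lN+\AsoftI}$ must be small), gives $\|\Ga(\solU,\solU)(\tau)\|_{Z_1}\lesssim(1+\tau)^{-\Ndim/2}$; since $\Ndim\ge3$ makes $\Ndim/2>1$, the convolution $\int_0^t(1+t-\tau)^{-\Ndim/2}(1+\tau)^{-\Ndim/2}\,d\tau\lesssim(1+t)^{-\Ndim/2}$ and \eqref{cor.1} follows. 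The value of $\lN$ in \eqref{exponent.ln} is exactly the velocity weight needed to run this $Z_1$ bilinear estimate, the extra $\ksob$ in the soft case arising from the Sobolev embedding $H^{\ksob}_x\hookrightarrow L^\infty_x$ used to extract the $x$--variable.

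We expect the main obstacle to be the soft--potential regime. Because $\gamma+2s<0$, the dissipation $\CD_{\dK,\wN}$ genuinely lacks the large--velocity decay that $\CE_{\dK,\wN}$ contains, so \eqref{thm.energy.1} alone yields neither exponential nor even any polynomial rate. One must (i) recover the missing decay by borrowing velocity weight via the interpolation above, at the cost of the losses $\AsoftI$ and $\lN$; (ii) bootstrap the resulting polynomial rate up to the sharp $(1+t)^{-\Ndim/2}$ by iteratively re-inserting the dispersive decay of $\semiG$; and (iii) keep the non cut-off bilinear estimates for $\Ga$ coherent within the anisotropic norm $\spacen$ throughout. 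Pinning the exponent exactly to the free--transport dispersive rate — proving optimality rather than merely some decay — is the delicate point.
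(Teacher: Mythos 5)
Your overall architecture — Duhamel's formula, the linear dispersive decay of Theorem~\ref{thm.ls}, the bilinear $Z_1$ estimates on $\Gamma$, and a bootstrap via $\mathcal M(t)=\sup_{0\le\tau\le t}(1+\tau)^{\Ndim/2}\CE_{\dK,\wN}(\tau)$ — matches the paper exactly, and the hard-potential case is essentially the paper's argument. Where you diverge is the treatment of the soft-potential mismatch between $\CE_{\dK,\wN}$ and $\CD_{\dK,\wN}$. The paper explicitly says the interpolation route ``fails by necessity at the nonlinear level\dots because the dissipation does not (and can not) contain the macroscopic components,'' and it instead introduces the time-velocity splitting $E(t)=\{w(v)\le t^{p'}\}$, $E^c(t)=\{w(v)>t^{p'}\}$, which produces a \emph{linear} differential inequality $\tfrac{d}{dt}\CE_{\dK,\wN}+\la p\,t^{p-1}\CE_{\dK,\wN}\lesssim t^{p-1}\bigl(\|\FP\solU\|_{\CL^2}^2+\CE_{\dK,\wN}^{\rm high}\bigr)$ with explicit integrating factor $e^{\la t^p}$, $p=1-p'\in(0,1)$, and the error $\CE^{\rm high}_{\dK,\wN}\lesssim(1+t)^{-\Ndim/2}\CE_{\dK,\wN+\AsoftI}(0)$ because $1\lesssim w^{\Ndim/(2p')}(v)(1+t)^{-\Ndim/2}$ on $E^c(t)$. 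Your proposed fix — interpolating $\CE_{\dK,\wN}\lesssim\CD_{\dK,\wN}^{\theta}\CE_{\dK,\wN+\AsoftI}^{1-\theta}+\|\FP\solU\|_{\CL^2}^2$ — is the natural response to the paper's objection, and the exponent bookkeeping (your $\theta=\theta(\AsoftI)$, which comes out to $\theta=2\AsoftI/(1+2\AsoftI)$ when $w=\ang{v}^{-(\gamma+2s)}$, giving dissipation rate $\theta/(1-\theta)=2\AsoftI>\Ndim$) is consistent. But plugging this into the Lyapunov inequality produces a \emph{nonlinear forced} ODE of the form $y'+c\,[(y-C\|\FP\solU\|^2)_+]^{1/\theta}\,\CE_{\dK,\wN+\AsoftI}(0)^{-(1-\theta)/\theta}\le 0$, whose reduction to the optimal rate $(1+t)^{-\Ndim/2}$ and whose compatibility with the $\mathcal M(t)\lesssim\eps+\mathcal M(t)^2$ bootstrap is not ``the same bootstrap'' — it requires a comparison/barrier argument that you do not carry out. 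This is the main gap in your proposal; whether it is a genuine dead end or just an unverified alternative route, it is not the paper's proof and is not justified as written.

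A second, more concrete gap is in the $Z_\infty$ step. You write that ``Theorem~\ref{thm.ls} gives $\|\semiG(t)\solU_0\|_{Z_\infty}\lesssim(1+t)^{-\Ndim/2}$,'' but Theorem~\ref{thm.ls} only bounds $\|w^\ell\semiG(t)\solU_0\|_{\CHd^m}$. Passing to $L^\infty_x$ via the naive embedding $H^{\ksob}_x\hookrightarrow L^\infty_x$ would lose the extra $m/2$ gain in $\sigma_{r,m}$ and only give rate $\Ndim/4$; to get the full $\Ndim/2$ one must use the optimized Gagliardo–Nirenberg inequality $\|h\|_{L^\infty_x}\lesssim\|\nabla_x^{k+1}h\|_{L^2_x}^{1/2}\|\nabla_x^{k}h\|_{L^2_x}^{1/2}$ (respectively with $\nabla_x^{k-1}$ for $\Ndim=2k$), which the paper isolates as Lemma~\ref{lem.semi.inf} using \eqref{lem.semi.inf.p1}, applied at the two different orders $m=k,k+1$ (or $k-1,k+1$) of Theorem~\ref{thm.ls}. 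Without this, the decay rate claimed for $r=\infty$ is not obtained. Finally, the bilinear $Z_1$ bound you invoke, $\|\Ga(\solU,\solU)\|_{Z_1}\lesssim\|\solU\|_{\CL^2}(\|\solU\|_{\CL^2}+\|\solU\|_{\spacen})$, is not what is actually needed: the paper proves the weighted $L^2_\vel$ estimate of Proposition~\ref{lem.h1h2}, which \emph{must allow negative velocity weights} (a nonstandard feature) so that the loss $\wE>2\sigma_{r,m}$ imposed by the soft-potential linear decay can be absorbed; see \eqref{main.u.e} and \eqref{main.u.e2}. Your outline does not account for this weight bookkeeping, which is exactly where the value of $\lN$ in \eqref{exponent.ln} comes from.
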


 Furthermore, we have faster time decay rates for higher derivatives and special components of the solution as follows.

\begin{corollary}\label{cor}
Let $\solU(t,x,\vel)$ be the solution to the Cauchy problem \eqref{Boltz} of the Boltzmann equation from Theorem \ref{thm.energy}.  If, in addition, $\eps_{\dK,\wN+\Asoftq}$ is sufficiently small then we have uniformly in $t\geq 0$ the time decay estimate
\begin{equation}\label{thm.ns.3}
    \CE_{\dK,\wN}^{\rm h}(t)  \lesssim  \eps_{\dK,\wN+\Asoftq} (1+t)^{-\frac{\Ndim+2}{2}+\varepsilon}.
\end{equation}
Here for the soft potentials, \eqref{kernelPsing}, for any small $\varepsilon > 0$ we choose 
$\Asoftq =\Asoftq(\varepsilon)$ sufficiently large.  For the hard potentials, \eqref{kernelP}, then we can choose  $\varepsilon = \Asoftq= 0$.

If both $\eps_{\dK,\Asoftq}$ and $\eps_{\dK,\lN+\AsoftI}$ are sufficiently small, 
where $\lN$ is defined in \eqref{exponent.ln}, then
 for any $2\leq r\leq \infty$, we have the following time decay estimate:  
\begin{equation}
  \|\{\FI-\FP\}\solU(t)\|_{Z_r}\lesssim(1+t)^{-\frac{\Ndim}{2}
 -\frac{1}{r}+\frac{\Ndim}{2r}+\varepsilon}.\label{cor.2}
\end{equation}
This will hold uniformly for any $t\geq 0$.    Again for the soft potentials, \eqref{kernelPsing}, we use any small $\varepsilon > 0$.  But the hard potentials, \eqref{kernelP}, we can choose  $\varepsilon = 0$.
\end{corollary}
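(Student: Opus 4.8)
The plan is to run a short bootstrap on the Lyapunov inequality \eqref{thm.energy.2}, feeding it the decay $\CE_{\dK,\wN}(t)\lesssim\eps_{\dK,\wN+\AsoftI}(1+t)^{-\Ndim/2}$ of Theorem~\ref{thm.ns} and the linearized time-decay estimates of Theorem~\ref{thm.ls}, so as to obtain \eqref{thm.ns.3}, and then to deduce \eqref{cor.2} from \eqref{thm.ns.3} and \eqref{cor.1} by interpolating $L^r_x$ between $L^2_x$ and $L^\infty_x$. The heart of the matter in \eqref{thm.ns.3} is to upgrade the right side of \eqref{thm.energy.2} to the bound
\[
\|\na_x\FP\solU(t)\|_{\CL^2}^2\lesssim\eps_{\dK,\wN+\Asoftq}\,(1+t)^{-\frac{\Ndim+2}{2}}
\]
(with an extra $(1+t)^{\varepsilon}$ for the soft potentials), which is a full power of $t$ faster than the decay of $\CE_{\dK,\wN}$ itself and is invisible to the nonlinear energy method. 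I would extract it from Duhamel's formula — $\solU(t)$ is the linearized solution operator applied to $\solU_0$, plus the time-integral of that operator applied to $\Ga(\solU(s),\solU(s))$ — using that $\FP$ commutes with $\na_x$: Theorem~\ref{thm.ls} provides the gain of one extra half-power of $t$-decay per $x$-derivative of the macroscopic (fluid) part of the linearized flow from $Z_1$ data, while the forcing is controlled by $\|\Ga(\solU(s),\solU(s))\|_{Z_1}\lesssim\CE_{\dK,\wN'}(s)\lesssim(1+s)^{-\Ndim/2}$ (Theorem~\ref{thm.ns}, at a suitably enlarged weight $\wN'$) together with $\|\na_x\Ga(\solU,\solU)(s)\|_{\CL^2}\lesssim(1+s)^{-\Ndim/2}$ from Leibniz and Sobolev product estimates; a routine time-convolution estimate then closes it, $\DgE$ keeping all exponents above $1$. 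Granting this, I close \eqref{thm.energy.2}: for the hard potentials \eqref{kernelP}, $\gamma+2s\ge0$, so $\ang{v}^{\gamma+2s}\ge1$ and $\CD_{\dK,\wN}(t)\gtrsim\CE_{\dK,\wN}^{\rm h}(t)$ term by term from \eqref{def.dNm}, \eqref{def.eNm.h}, and Gronwall against the source bound yields $\CE_{\dK,\wN}^{\rm h}(t)\lesssim(1+t)^{-(\Ndim+2)/2}$ with $\varepsilon=0$; for the soft potentials \eqref{kernelPsing}, the embedding $\spacen_{\wN}\hookrightarrow L^2_{\wN}$ fails by a power $|\gamma+2s|$ of the velocity weight, and I would instead interpolate
\[
\CE_{\dK,\wN}^{\rm h}(t)\lesssim\bigl(\CD_{\dK,\wN}(t)\bigr)^{\theta}\bigl(\CE_{\dK,\wN+\Asoftq}^{\rm h}(t)\bigr)^{1-\theta},
\]
where $\theta\in(0,1)$ is driven as close to $1$ as wanted by enlarging $\Asoftq$, and where $\CE_{\dK,\wN+\Asoftq}^{\rm h}(t)$ stays uniformly bounded (by the same source bound applied at weight $\wN+\Asoftq$). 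Inserting this into \eqref{thm.energy.2} gives a differential inequality $\frac{d}{dt}y+\la\,y^{1/\theta}\lesssim(1+t)^{-(\Ndim+2)/2}$ for $y=\CE_{\dK,\wN}^{\rm h}(t)$, whose solution decays like $(1+t)^{-(\Ndim+2)/2+\varepsilon}$; this is \eqref{thm.ns.3}.

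\emph{The $Z_r$ bound \eqref{cor.2}.} By the elementary interpolation $\|h\|_{L^r_x}\le\|h\|_{L^2_x}^{2/r}\|h\|_{L^\infty_x}^{1-2/r}$, performed inside the $L^2_\vel$ integral, it suffices to treat $r=2$ and $r=\infty$. For $r=2$, the $\al=\be=0$ term of \eqref{def.eNm.h} with $\wN=0$ gives $\|\{\FI-\FP\}\solU(t)\|_{Z_2}^2=\|\{\FI-\FP\}\solU(t)\|_{\CL^2}^2\lesssim\CE_{\dK,0}^{\rm h}(t)\lesssim(1+t)^{-(\Ndim+2)/2+\varepsilon}$ by \eqref{thm.ns.3} with $\wN=0$, which uses the smallness of $\eps_{\dK,\Asoftq}$. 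For $r=\infty$, since for each fixed $(t,x)$ the operator $\FP$ is the orthogonal projection of $L^2_\vel$ onto the null space $\CN$ of \eqref{nullLsp}, one has $\nsm\{\FI-\FP\}\solU(t,x,\cdot)\nsm_{L^2_\vel}\le\nsm\solU(t,x,\cdot)\nsm_{L^2_\vel}$ pointwise, whence $\|\{\FI-\FP\}\solU(t)\|_{Z_\infty}\le\|\solU(t)\|_{Z_\infty}\lesssim(1+t)^{-\Ndim/2}$ straight from \eqref{cor.1}, which uses the smallness of $\eps_{\dK,\lN+\AsoftI}$. Interpolating the two endpoint exponents,
\[
\frac{2}{r}\Bigl(-\frac{\Ndim+2}{4}\Bigr)+\Bigl(1-\frac{2}{r}\Bigr)\Bigl(-\frac{\Ndim}{2}\Bigr)=-\frac{\Ndim}{2}-\frac{1}{r}+\frac{\Ndim}{2r},
\]
which is exactly the exponent of \eqref{cor.2}; the $\varepsilon$ enters only through the $r=2$ endpoint and so is absent for the hard potentials.

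\emph{Main obstacle.} The genuinely delicate point is the extra half-power of $t$-decay for $\na_x\FP\solU$ in $\CL^2$: purely energetic arguments (Gagliardo--Nirenberg interpolation between $\CE_{\dK,\wN}$ and $\CE_{\dK,\wN}^{\rm h}$) fall a full power short of the required $(1+t)^{-(\Ndim+2)/4}$ for $\|\na_x\FP\solU\|_{\CL^2}$, so one must genuinely route through the linearized dispersive decay of Theorem~\ref{thm.ls} and carefully track the time-convolution against the decaying nonlinear forcing — in particular one must verify that the high-frequency, exponentially damped part of the Duhamel integral of $\Ga(\solU,\solU)$ still decays, which forces the factor $\|\na_x\Ga(\solU,\solU)(s)\|_{\CL^2}\lesssim(1+s)^{-\Ndim/2}$. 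The secondary difficulty, specific to the soft potentials, is the velocity-weight loss in $\CD_{\dK,\wN}$: the weight gaps $\Asoftq$ (and correspondingly $\lN$, $\AsoftI$) must be chosen large enough that $\theta$ is close enough to $1$ for the nonlinear differential inequality to produce the rate $(\Ndim+2)/2-\varepsilon$, while all the higher-weighted energy functionals remain small and uniformly bounded; this is the source of the $\varepsilon$-losses throughout.
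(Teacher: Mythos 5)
Your proposal is essentially correct and tracks the paper's structure for the two crucial inputs: (i) the upgraded decay $\|\na_x\FP\solU(t)\|_{\CL^2}^2 \lesssim (1+t)^{-(\Ndim+2)/2}$ is obtained exactly as in the paper, via Duhamel, Theorem~\ref{thm.ls} with one $x$-derivative ($m=1$, $r=1$), and the $Z_1\cap\CHd^1$ control of $\Ga(\solU,\solU)$ combined with \eqref{thm.ns.2}; and (ii) the $Z_r$ estimate \eqref{cor.2} is obtained by the same $L^2_x$--$L^\infty_x$ interpolation inside $L^2_\vel$, using the $r=2$ endpoint from \eqref{thm.ns.3} and the $r=\infty$ endpoint from \eqref{cor.1} together with the pointwise contraction property of $\{\FI-\FP\}$.

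Where you genuinely depart from the paper is in closing the soft-potential Lyapunov inequality \eqref{thm.energy.2}. You interpolate
\[
\CE_{\dK,\wN}^{\rm h}(t)\lesssim\bigl(\CD_{\dK,\wN}(t)\bigr)^{\theta}\bigl(\CE_{\dK,\wN+\Asoftq}^{\rm h}(t)\bigr)^{1-\theta},
\]
using that the latter factor stays uniformly bounded, and then solve a Bernoulli-type ODE $\dot y + \la y^{1/\theta}\lesssim(1+t)^{-(\Ndim+2)/2}$, driving $\theta\to 1$ by enlarging $\Asoftq$. The paper instead uses the time-velocity splitting $E(t)=\{w(v)\le t^{p'}\}$, giving $\CE_{\dK,\wN}^{{\rm h},\mathrm{low}}(t)/t^{p'}\lesssim\CD_{\dK,\wN}(t)$ and an exponentially decaying error on $E^c(t)$ (see \eqref{soft.up.h}--\eqref{decay.high.h}), then runs a \emph{linear} Gronwall argument with the sub-exponential integrating factor $e^{\la t^p}$, $p=1-p'$, as in \eqref{main.h.split}. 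Both routes produce the $\varepsilon$-loss (yours from $\theta<1$, the paper's from $p<1$). One small consequence of your choice: a supersolution comparison for $\dot y+\la y^q\lesssim\eps(1+t)^{-\alpha}$ with $q>1$ forces $y\lesssim\eps^{1/q}(1+t)^{-\alpha/q}=\eps^\theta(1+t)^{-\alpha\theta}$, so you recover the stated decay rate but with the (harmless, yet weaker) prefactor $\eps_{\dK,\wN+\Asoftq}^\theta$ rather than $\eps_{\dK,\wN+\Asoftq}$ as written in \eqref{thm.ns.3}; the paper's linear-in-$y$ Gronwall preserves the first power of $\eps$. This is cosmetic and does not affect \eqref{cor.2}, which only uses the time-decay rate, not the data-size prefactor.
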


These time decay rates for the $L^2$-norms in \eqref{thm.ns.2} and \eqref{cor.1} 
are optimal in the sense that they are the same as those for the linearized system, as in Theorem \ref{thm.ls}, which is studied using Fourier analysis.  These rates also coincide with those in the case of the Boltzmann equation \cite{MR0363332} for hard-sphere particles, and further they are the same in $L^r_x$ as those for solutions to the Heat equation. Corollary \ref{cor} shows that higher derivatives and the microscopic part of the solution decays faster.

\subsection{Historical discussion, new methods, and future directions}
Now our main theorems and their corollary show that the Cauchy problem for the non cut-off Boltzmann equation \eqref{Boltz} is {\it Hypocoercive} for perturbations, this holds in the sense of the description given by Villani \cite{villani-2006}.

We would like to point out that there have been extensive investigations on the rate of convergence to Maxwellian equilibrium for the nonlinear Boltzmann equation or related kinetic equations.  We only have the space to mention a brief few.   In the context of perturbations,  it was Ukai \cite{MR0363332}
who in 1974 proved the first decay result.  Here the spectral analysis was used to obtain the exponential rates for the
Boltzmann equation with hard potentials on torus.  Further time decay results on the torus were obtained in \cite{MR2116276,arXiv:0912.1742,MR575897,MR2209761,MR2366140,strainSOFT} and the references therein.

In particular we have studies of the decay rates for the Vlasov-Poisson-Boltzmann \cite{arXiv:0912.1742} and Vlasov-Maxwell-Boltzmann \cite{MR2209761} system using the existence theory from \cite{MR2000470}.  
We also mention the interaction functional approach from Duan \cite{MR2420519} which removes the time derivatives.
We further have decay results with an angular cut-off for the moderate soft potentials in \cite{MR575897} and for the full range of soft potentials in \cite{MR2209761,MR2366140}.  Now there are also rapid decay results for the relativistic Boltzmann equation \cite{strainSOFT}.  Of course these methods apply rigorously in the perturbative regime.

With the entropy production method Desvillettes-Villani \cite{MR2116276} obtained the first almost exponential
rate of convergence for solutions to the Boltzmann equation on the torus
with cut-off soft potentials for initial data without a size restriction assuming additional global in time uniform 
high regularity and moment bounds and  good lower bounds for the density at high velocities.  Then Strain-Guo \cite{MR2366140}
provided a very simple proof of the main decay results in \cite{MR2116276} for the unconditional perturbative regime, assuming only the high moment bounds without extra regularity.


To study the optimal convergence rates in the whole space has proven to be harder than the case of the torus because of the additional dispersive effects of the transport term in \eqref{Boltz}.  
The early results are well documented in Glassey \cite{MR1379589}.  

In particular we mention the Kawashima \cite{MR1057534} method of thirteen moments, also the optimal linear decay analysis of Duan, Ukai, Yang, Zhao \cite{MR2357430} for the hard sphere case.  
Also \cite{D-Hypo} studies in particular the Boltzmann equation with confining forces.
Further recently we have seen proofs of the optimal time decay for the one-species Vlasov-Poisson-Boltzmann system \cite{arXiv:0912.1742}  in $\R^3_x$, and the two-species Vlasov-Maxwell-Boltzmann system \cite{2010arXiv1006.3605D} using the existence theory from \cite{MR2259206}.
In the spirit of the Kawashima's work \cite{MR1057534}, for the linearized time-decay  analysis, instead of using the compensation function as in \cite{MR1057534}, a key idea in \cite{D-Hypo,arXiv:0912.1742,2010arXiv1006.3605D} is to design several interactive functionals in order to exploit  the dissipation which is present in the degenerate parts of the solution.  For further references and discussions of these and other related results we refer to the commentary in \cite{arXiv:0912.1742,2010arXiv1006.3605D}.

 We point out that the methods above do not apply to the Boltzmann equation, with or without angular cut-off, in the whole space for a soft potential.  In that context we only have the result of Ukai-Asano \cite{MR677262} from 1982.   
For the cut-off moderately soft potentials, so that roughly $b(\cos\theta)\in L^\infty(\sph)$ instead of \eqref{kernelQ} and $-1< \gamma \le 0$ instead of \eqref{kernelP} and \eqref{kernelPsing}, they obtain in the whole space
$$
\| \ang{\cdot}^{\wB} \solU(t,\cdot,\cdot) \|_{L^\infty_v(H^m_x)}
\lesssim (1+t)^{-a}.
$$
Above $\ang{\cdot}$ denotes $\ang{\vel}$ in the norm as usual.  Here $a \eqdef \min\left\{\frac{n}{2}\left(\frac{1}{p} - \frac{1}{2}\right), 1\right\}$ for $p\in [1,2)$.  
Their initial data uses that the following quantity is sufficiently small  
$$
\| \ang{\cdot}^{\wB+a \gamma} \solU_0(\cdot,\cdot) \|_{L^\infty_v(H^m_x)}
+ \| \solU_0\|_{Z_p},
$$
where $\wB > \frac{\Ndim}{2} - \gamma$ and $m > \frac{\Ndim}{2}$.
This convergence rate of $a=\frac{n}{4}$ when $p=1$ is optimal, in comparison to ours, in dimensions $\Ndim =2$, $3$, $4$ but the rate of $a=1$ is not optimal for $\Ndim \ge 5$.  Their methods involve the spectral analysis of the semi-group, which as far as we are aware has not been extended lower than $\gamma>-1$.  

At the same time, we would like to mention that after the main results in this article were complete, a related paper by Alexandre, Morimoto, Ukai, Xu, and Yang appeared in \cite{MR2847536}.  For the non cut-off soft potentials $\gamma + 2s <0$ in the range $\max\{-3, -2s -3/2\}< \gamma \le -2s$, they give the following convergence rate in \cite{MR2847536}:
$$
\esssup_{x\in \R^3_x} \nsm \solU(t,x) \nsm_{H^{\dK-3}(\R^3_\vel)}  \lesssim  (1+t)^{-1}.
$$
This non-optimal rate is proven on the basis of the pure energy method and a time differential inequality.
Note that in the existence theorem they use $\dK \ge 6$ derivatives and $\wN \ge \dK +1$ weights in $\Ndim = 3$ dimensions.  
They also prove some optimal convergence rates for the hard potential case when $\gamma + 2s > 0$. We point out that 
our optimal decay results will apply to their solutions  \cite{MR2847536}.

As far as we know, other than ours, these are the only two results obtaining time decay rates in the whole space for the soft potential Boltzmann equation with or without the angular cut-off assumption.

To obtain the results from this paper in Theorem \ref{thm.energy}, Theorem \ref{thm.ns}, and Corollary \ref{cor}, 
we build upon previous work developed in  collaborations of the author with Renjun Duan \cite{arXiv:0912.1742,2010arXiv1006.3605D}, Phillip T. Gressman \cite{gsNonCutA,gsNonCut0,gsNonCutEst}, and Yan Guo \cite{MR2209761,MR2366140}.  The ideas in those works were to estimate the dispersion in the whole space using the interactive functionals and the Fourier analysis \cite{arXiv:0912.1742,2010arXiv1006.3605D}, to prove sharp estimates for the Boltzmann collision operator without angular cut-off \cite{gsNonCutA,gsNonCut0,gsNonCutEst}, and also to prove rapid decay on the torus for the full range of cut-off soft potentials using interpolation 
\cite{MR2209761}
and splitting methods \cite{MR2366140}. 
 
 The present work incorporates a fusion of each of these different ideas, but it also requires several new methodologies.  
In particular previously the Fourier analysis techniques for studying the dispersion were designed around the hard-potential cases where the dissipation is as strong as the instant energy.  For the soft potentials this is just simply false.

To fix this difficulty, we need to prove new weighted instant time-frequency Lyapunov inequalities in $(t,k)$ on the Fourier transform side after integrating in $\vel$.  We then must show that a functional satisfying \eqref{thm.tfli.1}, i.e.
\begin{equation}\notag
    \CE_\wN(t,k) \approx \nsm w^\ell \hat{\solU}(t,k)\nsm_{L^2}^2,
\end{equation}
is preserved by the linear flow for any $\wN\in\R$.  Once we have this preservation, we can use interpolation with a family of higher weight functions to obtain the linear decay.   Yet to prove the preservation is not so obvious, in fact we find an appropriate functional $\CE_\wN(t,k)$ which must be defined in two different ways for $|k| \le 1$ and alternatively for $|k| >1$.  Fortunately different inequalities are available in each case that allow us to exploit the changing behavior of the solution in these two separate regimes.

However unfortunately this interpolation method fails by necessity at the non-linear level in the whole space because the dissipation \eqref{def.dNm} does not (and can not) contain the macroscopic components \eqref{form.p}.   To prove the nonlinear decay we use the time-velocity splitting.  Note that this splitting was designed to deduce exponential decay $O(e^{-\la t^p})$ for $p\in (0,1)$ in the case of a cut-off soft potential \cite{MR2366140} with an exponential velocity weight on the initial data.  This is a completely different purpose from the one for which we use the splitting herein.    For this paper, the novelty is in that the error terms can be controlled by extra polynomial velocity weights for the soft potentials \eqref{kernelPsing}, and the linear decay allows us to deduce the convergence rates in Theorem \ref{thm.ns} and Corollary \ref{cor} using the Lyapunov inequalities from Theorem \ref{thm.energy}.  

We also gain back a large ``weight loss'' nonlinearly since the microscopic part $\FP \solU$ decays exponentially in the velocity variables and the linear decay works for any $\wN \in \R$.  This gain is further aided by the fact that we prove new estimates for the $L^2_v$ norm of $\Ga(\solU, \solU)$ from \eqref{gamma0} which allow negative weights on the initial data.

To obtain the optimal time decay rates in $L^r_x$ with $2\le r \le \infty$ as in \eqref{cor.1}  we use  an optimized Sobolev inequality \eqref{lem.semi.inf.p1}.  Here we recall \cite{2010arXiv1006.3605D}.

Furthermore,  we expect that the methods developed in this paper can be useful in several other physical contexts.   We believe that our new approaches developed in this paper are generally applicable for proving time decay rates for soft-potential kinetic equations with perturbative initial data.
In particular, including numerous additional efforts, with Zhu we can prove the optimal time decay for the relativistic Boltzmann equation \cite{ZhuStrain} in the whole space.  Moreover, in particular, it may be interesting to check if these optimal decay results could also be carried out for the solutions to the Landau equation obtained by Guo \cite{MR1946444} in the whole space.  
We additionally believe that these methods can apply to several other various kinetic equations where the soft potentials are present.

Lastly we point out that these results are constructive in the sense that it is possible to track all of the constants, although we make no effort to do so.

\begin{remark}
Note that for simplicity we have used $\NgE$ derivatives in the above existence results.  However for the hard potentials \eqref{kernelP}, or more generally under \eqref{kernelPsing} combined with $\gamma + 2s > -\frac{\Ndim}{2}$, these decay results will apply under less stringent regularity assumptions, see \cite{gsNonCut0}.
\end{remark}

\begin{remark}
It is worth mentioning that all of our main results above hold in $\DgE$ dimensions.   Furthermore, Theorem \ref{thm.energy} holds under $\Ndim \ge 2$.  When $\Ndim =2$, it can be seen from the proofs in Section \ref{sec.decayNL}
that logarithms in the time decay rate show up as in the estimate \cite[Proposition 4.5]{strainSOFT}.   Thus easy modifications of our proofs would allow the same decay results as above when $\Ndim =2$, however in each case above we would lose a small epsilon in the decay rate as a result of the temporal log.  This could be upgraded to optimal decay by proving that for a given data $\solU_0$ which satisfies $\FP \solU_0 =0$, then the linear decay in Theorem \ref{thm.ls} is actually faster by $\frac{1}{2}$.  Such a property is well known for the Boltzmann equation \cite{MR677262,MR0363332} with an angular cut-off and hard or moderately soft-potentials. We are willing to conjecture that this property holds true even without angular cut-off, but proofs of such properties usually use the spectral analysis of the linearized collision operator.  We avoid a complicated spectral study by using the Fourier transform, and with such methods we are unaware of any proof of this property.    Note further that all of the the linear decay estimates in Section \ref{sec.decayl} hold true for any $\Ndim \ge 2$. 
\end{remark}

\subsection{Organization of the paper}
The rest of this paper is organized as follows.  In Section \ref{sec.decayl}, we will study the time decay of solutions to the linear Boltzmann equation \eqref{ls}.  This is decomposed into several steps which are outlined at the beginning of Section \ref{sec.decayl} below.  Then in Section \ref{sec.decayNL} we start out by proving the energy Lyapunov inequalities from Theorem \ref{thm.energy}, and we finish by proving the nonlinear time decay rates from Theorem \ref{thm.ns} and Corollary \ref{cor}.

\section{Linear time decay}
\label{sec.decayl}

In this section we study the time-decay properties of solutions to the Cauchy problem for the linearized non cut-off Boltzmann equation \eqref{Boltz}.
We state our main results in the first subsection.  Then in Section \ref{sec.sub.tfli} we derive several velocity weighted pointwise time-frequency Lyapunov inequalities.  A key point here is that we can include weights and prove pointwise instantaneous bounds for solutions to the linearized equation.  
Finally in Section \ref{sec.tf} the temporal decay rates of the solution and
its derivatives in $L^2_x$ are proven as in Theorem \ref{thm.ls} and Corollary \ref{cor.ls}.

\subsection{Time decay properties of solutions to the linearized equation}
We consider the linearized Boltzmann equation with a
microscopic source $\sourceG=\sourceG(t,x,\vel)$:
\begin{equation}\label{ls}
    \left\{\begin{array}{l}
  \dis     \pa_t \solU+\vel\cdot\na_x \solU +\FL \solU =\sourceG,\\
\dis \solU|_{t=0}=\solU_0.
    \end{array}\right.
\end{equation}
For the nonlinear system \eqref{Boltz}, the non-homogeneous source term is given by
\begin{equation}\label{def.g.non}
    \sourceG=\Ga(\solU,\solU).
\end{equation}
In this case $\sourceG=\{\FI-\FP\} \sourceG$.  Solutions of \eqref{ls} formally take the following form
\begin{equation}
    \solU(t)=\semiG(t)\solU_0 + \int_0^t ds ~ \semiG(t-s)~\sourceG(s), \quad \semiG(t)\eqdef e^{-t\SB}, \quad \SB \eqdef \FL+\vel\cdot\na_x.
    \label{ls.semi}
\end{equation}
Here $\semiG(t)$ is the linear solution operator for the Cauchy problem corresponding to  \eqref{ls} with $\sourceG=0$.
  The main result of this section is stated as follows.

\begin{theorem}\label{thm.ls}
Fix $1\leq r\leq 2$, $m \ge 0$,   and $\ell \in\R$.
Consider the Cauchy problem \eqref{ls} with $\sourceG=0$.  The solution of the
linearized homogeneous system satisfies 
\begin{equation}
 \|w^\wN \semiG(t)\solU_0\|_{\CHd^m}
\lesssim
(1+t)^{-\sigma_{r,m}}
\|w^{\wN}\solU_0\|_{\CHd^m\cap Z_r},
 \label{thm.ls.1}
\end{equation}
for the hard potentials \eqref{kernelP} and any $t\geq 0$.  Here $\sigma_{r,m}$ is given by
\begin{equation}
\label{rateLIN}
    \sigma_{r,m}\eqdef \frac{\Ndim}{2}\left(\frac{1}{r}-\frac{1}{2}\right)+\frac{m}{2}.
\end{equation}
For the soft potentials \eqref{kernelPsing} with $\wE > 2 \sigma_{r,m}$ we further obtain for any $t\geq 0$ that
\begin{equation}
 \|w^\wN \semiG(t)\solU_0\|_{\CHd^m}
\lesssim
(1+t)^{-\sigma_{r,m}}  \|w^{\wN+\wE}\solU_0\|_{\CHd^m \cap Z_r}.
 \label{thm.ls.1.soft}
\end{equation}
\end{theorem}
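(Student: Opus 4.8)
The plan is to pass to the Fourier side in $x$ and reduce the theorem to pointwise-in-$k$ bounds on a velocity-weighted energy. If $\widehat{\solU}(t,k)$ denotes the Fourier transform of $\solU(t,x,\vel)$ in $x$, then the homogeneous version of \eqref{ls} becomes the family of equations $\pa_t\widehat{\solU}+2\pi\rmi(k\cdot\vel)\widehat{\solU}+\FL\widehat{\solU}=0$ in $L^2_\vel$, indexed by $k\in\threed$. The first step is to construct, in Section~\ref{sec.sub.tfli}, a velocity-weighted instant time-frequency functional $\CE_\wN(t,k)\approx\nsm w^\wN\widehat{\solU}(t,k)\nsm^2_{L^2_\vel}$ together with a dissipation rate $\CD_\wN(t,k)$ for which
\begin{equation}\label{sketch.lyap}
\frac{d}{dt}\,\CE_\wN(t,k)+\la\,\CD_\wN(t,k)\le0,\qquad\forall\,t\ge0,\ k\in\threed .
\end{equation}
The functional is obtained by adding to $\nsm w^\wN\widehat{\solU}\nsm^2_{L^2_\vel}$ a small Kawashima-type interaction term coupling the frequency $k$ to the macroscopic part $\FP\widehat{\solU}$; this is what converts the degenerate transport term $2\pi\rmi(k\cdot\vel)$ into genuine dissipation $\frac{|k|^2}{1+|k|^2}\nsm\FP\widehat{\solU}\nsm^2_{L^2_\vel}$ of the hydrodynamic field, while the weighted coercivity of $\FL$ from \cite{gsNonCut0} supplies the microscopic dissipation $\nsm w^\wN\{\FI-\FP\}\widehat{\solU}\nsm^2_{\spacen}$. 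The correct interaction term must be chosen differently for $|k|\le1$ and $|k|>1$, since the dissipation coming from $\FL$ is of order one when $|k|>1$ but only competes with the weak $|k|^2$ hydrodynamic gain when $|k|\le1$. Once \eqref{sketch.lyap} is in hand, $\CD_\wN\ge0$ gives the preservation $\CE_\wN(t,k)\le\CE_\wN(0,k)$, hence $\nsm w^\wN\widehat{\solU}(t,k)\nsm^2_{L^2_\vel}\lesssim\nsm w^\wN\widehat{\solU_0}(k)\nsm^2_{L^2_\vel}$, for every real $\wN$.

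For the hard potentials \eqref{kernelP}, since $\ang{\vel}^{\gamma+2s}\ge1$ the microscopic $\spacen$-dissipation already controls $\nsm w^\wN\{\FI-\FP\}\widehat{\solU}\nsm^2_{L^2_\vel}$, and combined with the hydrodynamic gain this yields $\CD_\wN(t,k)\gtrsim\frac{|k|^2}{1+|k|^2}\CE_\wN(t,k)$; then \eqref{sketch.lyap} and Gronwall give $\CE_\wN(t,k)\lesssim e^{-\la\frac{|k|^2}{1+|k|^2}t}\nsm w^\wN\widehat{\solU_0}(k)\nsm^2_{L^2_\vel}$. Multiplying by $|k|^{2m}$ and integrating in $k$: the region $|k|>1$ contributes an exponentially small term bounded by $\|w^\wN\solU_0\|^2_{\CHd^m}$, while on $|k|\le1$ the exponent is $\sim|k|^2t$ and a standard Hausdorff--Young (in $x$) and Hölder (in $k$) estimate, as in \cite{arXiv:0912.1742,2010arXiv1006.3605D} — using $\nsm w^\wN\widehat{\solU_0}(\cdot,k)\nsm_{L^2_\vel}\in L^{r'}_k$ with $\tfrac1r+\tfrac1{r'}=1$ — produces the bound $(1+t)^{-2\sigma_{r,m}}\|w^\wN\solU_0\|^2_{Z_r}$, with $\sigma_{r,m}$ as in \eqref{rateLIN}. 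Adding the two pieces and taking square roots gives \eqref{thm.ls.1}.

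For the soft potentials \eqref{kernelPsing}, $w=\ang{\vel}^{-(\gamma+2s)}\ge1$ and the $\spacen$-norm is strictly weaker than $L^2_\vel$, so the dissipation only delivers $\CD_\wN(t,k)\gtrsim\frac{|k|^2}{1+|k|^2}\nsm w^\wN\widehat{\solU}(t,k)\nsm^2_{L^2_{\gamma+2s}}$, which no longer controls $\CE_\wN$. The remedy is interpolation: since $\ang{\vel}^{\gamma+2s}=w^{-1}$, Hölder in $\vel$ with exponents $\tfrac{2\wE+1}{2\wE}$ and $2\wE+1$ yields, for any weight loss $\wE>0$,
\begin{equation}\label{sketch.interp}
\nsm w^\wN\widehat{\solU}\nsm^2_{L^2_\vel}\lesssim\Big(\nsm w^\wN\widehat{\solU}\nsm^2_{L^2_{\gamma+2s}}\Big)^{\frac{2\wE}{2\wE+1}}\Big(\nsm w^{\wN+\wE}\widehat{\solU}\nsm^2_{L^2_\vel}\Big)^{\frac1{2\wE+1}} .
\end{equation}
Using \eqref{sketch.interp} together with the preservation of $\CE_{\wN+\wE}$ and $\CE_\wN\approx\nsm w^\wN\widehat{\solU}\nsm^2_{L^2_\vel}$ turns the dissipation lower bound into $\CD_\wN(t,k)\gtrsim\frac{|k|^2}{1+|k|^2}\CE_\wN(t,k)^{1+\frac1{2\wE}}\CE_{\wN+\wE}(0,k)^{-\frac1{2\wE}}$, so \eqref{sketch.lyap} becomes a Bernoulli-type differential inequality $y'\lesssim-c\,y^{1+\frac1{2\wE}}$ in $t$ whose solution satisfies $y(t)\lesssim(c\,t)^{-2\wE}$; combining this with the trivial bound $\CE_\wN(t,k)\lesssim\CE_\wN(0,k)\lesssim\CE_{\wN+\wE}(0,k)$ (valid since $w\ge1$) gives
\begin{equation}\label{sketch.soft}
\CE_\wN(t,k)\lesssim\Big(1+\tfrac{|k|^2}{1+|k|^2}\,t\Big)^{-2\wE}\nsm w^{\wN+\wE}\widehat{\solU_0}(k)\nsm^2_{L^2_\vel} .
\end{equation}
Multiplying by $|k|^{2m}$ and integrating, the region $|k|>1$ decays like $(1+t)^{-2\wE}$, which exceeds $(1+t)^{-2\sigma_{r,m}}$ because $\wE>2\sigma_{r,m}$, and is controlled by $\|w^{\wN+\wE}\solU_0\|^2_{\CHd^m}$; on $|k|\le1$ the factor is $(1+|k|^2t)^{-2\wE}$, the large power again guaranteeing convergence of the $k$-integral near the origin, and the same Hausdorff--Young/Hölder computation as above yields $(1+t)^{-2\sigma_{r,m}}\|w^{\wN+\wE}\solU_0\|^2_{Z_r}$. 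Adding these and taking square roots gives \eqref{thm.ls.1.soft}.

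I expect the main obstacle to be the first step: constructing the modified functional $\CE_\wN(t,k)$ so that \eqref{sketch.lyap} holds \emph{while} $\CE_\wN(t,k)\approx\nsm w^\wN\widehat{\solU}(t,k)\nsm^2_{L^2_\vel}$ for \emph{every} $\wN\in\R$ (including negative exponents for soft potentials) — this two-sided comparison is exactly what gives the weight preservation used in \eqref{sketch.soft}. One must carry the velocity weight $w^\wN$ through the commutator of $\FL$ with the transport operator $2\pi\rmi(k\cdot\vel)$ and through the interaction corrections while keeping all error terms absorbable into $\CD_\wN$, and the correct functional genuinely changes form across $|k|=1$. By contrast everything downstream — the interpolation \eqref{sketch.interp}, the Bernoulli ODE, and the Fourier-space integration — is essentially routine, following the scheme of \cite{arXiv:0912.1742,2010arXiv1006.3605D}.
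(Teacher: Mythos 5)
Your proposal is correct and follows essentially the same route as the paper: build the velocity-weighted time-frequency Lyapunov functional $\CE_\wN(t,k)$ with different forms on $|k|\le1$ and $|k|>1$ (Theorem \ref{thm.tfli}), then for hard potentials apply Gronwall plus a low/high frequency split with H\"older--Hausdorff--Young, and for soft potentials use a weighted-norm interpolation to convert the Lyapunov inequality into a Bernoulli ODE in $t$, integrate it, and repeat the frequency split. The only (inconsequential) difference is that your interpolation uses $\nsm w^\wN\widehat{\solU}\nsm_{L^2_{\gamma+2s}}^2=\nsm w^{\wN-1/2}\widehat{\solU}\nsm_{L^2}^2$ directly, giving exponent $2\wE/(2\wE+1)$ and a $(1+|k|^2t)^{-2\wE}$ decay where the paper gets $(1+|k|^2t/\wE)^{-\wE}$; this would in fact let you relax the hypothesis to $\wE>\sigma_{r,m}$, but the method is identical.
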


We remark that the $\CHd^m$ norm above is a convenient tool, which could be replaced by any $\partial^\al_x$ with $|\al | =m$.  On the basis of the previous theorem, we have the following corollary which allows faster linear time decay away from the null space \eqref{form.p}.

\begin{corollary}\label{cor.ls}
Under the same conditions as Theorem \ref{thm.ls} we have
\begin{equation}
 \|\{\FI - \FP\}\semiG(t)\solU_0\|_{\CHd^m}
\lesssim
(1+t)^{-\sigma_{r,m+1}+\epsilon}\|w^{\wE}\solU_0\|_{\CHd^{m+1}\cap Z_r}.
\notag
\end{equation}
For the hard potentials \eqref{kernelP} we take $\wE=0$ and $\epsilon =0$.   Considering the soft potentials \eqref{kernelPsing},
 for any $\epsilon>0$
 we choose $\wE = \wE(\epsilon)>0$ sufficiently large.  
\end{corollary}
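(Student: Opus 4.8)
The plan is to pass to the Fourier variable $k$, establish a high-order \emph{microscopic} time-frequency Lyapunov inequality in $(t,k)$ --- the frequency-localized, whole-space analogue of the high-order estimate \eqref{thm.energy.2} --- and then run a frequency-by-frequency Gronwall/Duhamel argument in which the macroscopic component enters the source only through the streaming term, hence with an extra factor of $|k|$; this single extra factor is exactly what improves the decay rate from $\sigma_{r,m}$ to $\sigma_{r,m+1}$. Throughout I write $\hat{\solU}(t,k)=\widehat{\semiG(t)\solU_0}(k)$, which for each fixed $k$ solves $\pa_t\hat{\solU}+2\pi\rmi(k\cdot\vel)\hat{\solU}+\FL\hat{\solU}=0$ in $L^2_\vel$.

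The first step is the Fourier-side energy identity: pairing the equation with $\overline{\hat{\solU}}$ in $L^2_\vel$ and taking real parts, the streaming term drops because $k\cdot\vel\in\R$, and \eqref{coerc} applied to the real and imaginary parts gives $\frac{d}{dt}\nsm\hat{\solU}(t,k)\nsm_{L^2}^2+2\la\nsm\{\FI-\FP\}\hat{\solU}(t,k)\nsm_{\spacen}^2\le 0$. By itself this only controls the microscopic dissipation integrated in time, so to extract a pointwise-in-$t$ rate I would construct, in the spirit of Section \ref{sec.sub.tfli}, an instantaneous high-order functional $\CE^{\rm h}_\wN(t,k)\approx\nsm w^\wN\{\FI-\FP\}\hat{\solU}(t,k)\nsm_{L^2}^2$, corrected by small $|k|$-weighted interactive terms whose role is to absorb the streaming of the microscopic part $\{\FI-\FP\}\hat{\solU}$ itself, so that it satisfies
\begin{equation}\notag
\frac{d}{dt}\CE^{\rm h}_\wN(t,k)+\la\,\CD_\wN(t,k)\lesssim|k|^2\,\nsm w^\wN\hat{\solU}(t,k)\nsm_{L^2}^2,\qquad \CD_\wN(t,k)\gtrsim\nsm\{\FI-\FP\}\hat{\solU}(t,k)\nsm_{\spacen_\wN}^2.
\end{equation}
The right-hand side is exactly the quantity already controlled in the proof of Theorem \ref{thm.ls}, and the factor $|k|^2$ reflects that the only genuine source in the equation for $\{\FI-\FP\}\hat{\solU}$ is $\{\FI-\FP\}(2\pi\rmi(k\cdot\vel)\FP\hat{\solU})$, i.e. $|k|$ times a fixed velocity moment of $\FP\hat{\solU}$.

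For the hard potentials \eqref{kernelP} the weight exponent in $\spacen_\wN$ is non-negative, so $\CD_\wN\gtrsim\CE^{\rm h}_\wN$ and the displayed inequality becomes a true Gronwall inequality. Taking $\wN=0$, writing the Duhamel formula for $\CE^{\rm h}_0(t,k)$, multiplying by $|k|^{2m}$ and integrating over $k$ with the usual splitting into $|k|\le 1$ and $|k|>1$, the extra $|k|^2$ converts the level-$m$ estimate behind Theorem \ref{thm.ls} into the level-$(m+1)$ one, while the convolution of the exponential damping with the polynomially decaying source costs nothing; this gives the claimed bound with $\epsilon=0$ and $\wE=0$. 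For the soft potentials \eqref{kernelPsing} the obstruction is that $\spacen_\wN$ controls only $L^2_{\gamma+2s}$ with $\gamma+2s<0$, so $\CD_\wN$ no longer dominates $\CE^{\rm h}_\wN$; I would close by interpolating $\CE^{\rm h}_\wN\lesssim(\CD_\wN)^{1-\theta}(\widetilde\CE_\wN)^{\theta}$ against a higher-weight microscopic functional $\widetilde\CE_\wN$ that is bounded --- with only slow decay --- by Theorem \ref{thm.ls} applied with a large velocity weight, which is precisely the origin of the requirement that $\wE=\wE(\epsilon)$ be large on the initial data. The resulting super-linear-damping differential inequality is then treated by the standard interpolation/decay lemma of the type used in \cite{MR2209761,strainSOFT}, producing the rate $\sigma_{r,m+1}$ up to the arbitrarily small loss $\epsilon$ fixed by the size of that extra weight.

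The step I expect to be the main obstacle is the construction of $\CE^{\rm h}_\wN(t,k)$ with exactly the right $|k|$-weighted interactive corrections, so that the streaming of the microscopic part is genuinely absorbed into $\CD_\wN$ and only the $|k|^2\FP\hat{\solU}$ contribution survives on the right; and, for the soft potentials, carrying out the interpolation against $\widetilde\CE_\wN$ uniformly in $k$ in a way compatible with the low/high frequency splitting. Once this inequality is in place, the remaining steps are the Fourier-space computations already developed for Theorem \ref{thm.ls}.
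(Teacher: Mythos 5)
Your proposal captures the correct starting point --- the Fourier-side microscopic Lyapunov inequality, which the paper records as \eqref{macroWeightINEQ.noL} --- and the hard-potential branch of your argument (Gronwall/Duhamel in $k$, extra $|k|^2$ upgrades $m$ to $m+1$) is essentially what the paper does. But for the soft potentials there is a genuine gap, because the device you want to use does not transplant from Theorem \ref{thm.ls} to this setting.

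The interpolation trick in the proof of Theorem \ref{thm.ls} hinges on the linearized equation being \emph{source-free} ($g=0$): interpolating $\CE_\wN\lesssim(\CD_\wN)^{1-\theta}\widetilde\CE_\wN^{\theta}$ turns \eqref{thm.tfli.2} into an autonomous inequality of the form $\pa_t y\le -c\,\fcnP(k)\,y^{1+1/\wE}\,Z^{-1/\wE}$, which integrates exactly because there is nothing on the right. In the Corollary the ODE for $\nsm\{\FI-\FP\}\hat{\solU}(t,k)\nsm^2_{L^2}$ necessarily carries the macroscopic source $|k|^2\nsm\FP\hat{\solU}(t,k)\nsm^2_{L^2}$, which is \emph{not} small and does \emph{not} decay pointwise in $k$ at low frequency (indeed $\CE_\wN(t,k)\le\CE_\wN(0,k)$ is all one has pointwise). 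After interpolation you would be facing a driven Bernoulli/Riccati-type inequality; the ``standard interpolation/decay lemma'' of \cite{MR2209761,strainSOFT} handles the homogeneous case, and there is no superposition principle by which the source contribution can be split off. Your write-up does not explain how to close this, and I do not see a clean way to do so. The paper avoids the issue entirely by using a \emph{time-velocity splitting}: writing $E(t)=\{w(v)\le t^{p'}\}$, the weak dissipation $L^2_{\gamma+2s}$ is converted on $E(t)$ into a linear damping coefficient $\la'/t^{p'}$ for the $L^2$ norm, which admits the explicit integrating factor $e^{\la t^p}$ and hence a genuine Duhamel formula with both the $|k|^2\nsm\FP\hat{\solU}\nsm^2$ source and the $E^c$ remainder on the right; the remainder is then killed by paying velocity weights via $1\lesssim w^{2\sigma_{r,m+1}/p'}(1+s)^{-2\sigma_{r,m+1}}$ on $E^c(s)$ together with the boundedness $\CE_\wE(t,k)\le\CE_\wE(0,k)$. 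This linear reduction is exactly what your interpolation route lacks.

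One smaller point: you posit that $\CE^{\rm h}_\wN(t,k)$ needs $|k|$-weighted interactive corrections to absorb the streaming of $\{\FI-\FP\}\hat{\solU}$. It does not. The streaming term $2\pi\rmi(k\cdot\vel)\{\FI-\FP\}\hat{\solU}$ is anti-Hermitian and drops when paired against $\overline{\{\FI-\FP\}\hat{\solU}}$; only the cross terms $\{\FI-\FP\}(\rmi v\cdot k\,\FP\hat{\solU})$ and $\FP(\rmi v\cdot k\,\{\FI-\FP\}\hat{\solU})$ survive, and these are handled by Cauchy--Schwarz alone, yielding \eqref{macroWeightINEQ.noL} with the plain functional $\nsm\{\FI-\FP\}\hat{\solU}\nsm^2_{L^2}$ and no interactive terms. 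The interactive functional $\CE_{\rm int}$ is used in Section \ref{macro.diss.d} for an entirely different purpose --- recovering dissipation of the \emph{macroscopic} part $\FP\hat{\solU}$, which you do not need for this corollary since $\FP\hat{\solU}$ appears only in the source.
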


The rest of this section is devoted to the proof of Theorem \ref{thm.ls} and Corollary \ref{cor.ls}.
First, in the next Section \ref{sec.sub.tfli} we prove several velocity weighted Lyapunov inequalities for the linear evolution \eqref{ls} which are pointwise in $(t,k)$.  Then, in Section \ref{sec.tf}, we will use these inequalities to prove the time decay.

\subsection{Weighted time-frequency Lyapunov inequalities}\label{sec.sub.tfli}
In this subsection, we shall construct the desired weighted time-frequency
Lyapunov functional as in Theorem \ref{thm.tfli}.  In the proof we have to take great care to estimate the microscopic and macroscopic parts for $|k| \le 1$ and $|k| > 1$ respectively each in different ways in order to capture the delicate individual behavior of each separate piece.

\subsubsection{Estimate on the microscopic dissipation}

The first step in our construction of the time-frequency Lyapunov
functional is to estimate the microscopic dissipation on the basis of the
coercivity property \eqref{coerc} of $\FL$. 

Consider  \eqref{ls}, taking the Fourier transform in $x$ grants us
\begin{equation}\label{ls-1f}
  \dis     \pa_t \hat{\solU}+\rmi\vel\cdot k ~ \hat{\solU}+\FL \hat{\solU} =\hat{\sourceG}.
\end{equation}
Then we multiply equation \eqref{ls-1f} with $\overline{\hat{\solU}(t,k,\vel)}$ and integrate over $\vel$ to achieve  
\begin{equation}
\notag
    \frac{1}{2}\frac{d}{dt} \nsm\hat{\solU}(t,k)\nsm_{L^2}^2+\rmre \ang{\FL \hat{\solU}, \hat{\solU} }
    =\rmre \ang{\hat{\sourceG}, \hat{\solU}}.
\end{equation}
From the coercivity estimate \eqref{coerc} and \eqref{form.p} one has that
\begin{equation}\label{diss-micr}
\frac{d}{dt} \nsm\hat{\solU}(t,k)\nsm_{L^2}^2
+
\la \nsm \{\FI-\FP\} \hat{\solU}\nsm_{L^2_{\gamma+2s}}^2
\lesssim
\left|  \rmre \ang{\hat{\sourceG}, \hat{\solU}} \right|.
\end{equation}
This is the first main estimate which we will use in the following.

Notice that in \eqref{diss-micr} we use the inclusion $L^2_{\gamma+2s} \supset \spacen$ from \eqref{normdef}.  This lower bound will also be implicitly used several times below since the $L^2_{\gamma+2s}(\threed_v)$ norm already captures the control that we will need in order to prove the linear decay.

\subsubsection{Microscopic weighted time-frequency  inequality}
In this section we prove the following instantaneous Lyapunov inequality
with a velocity weight $\wN \in \R$:
\begin{multline}\label{macroWeightINEQ}
\frac{d}{dt}\nsm w^{\wN}\{\FI-\FP\}\hat{\solU}(t,k)\nsm^2_{L^2} +
\la \nsm w^{\wN}\{\FI-\FP\}\hat{\solU}(t,k)\nsm^2_{L^2_{\gamma+2s}}
\\
\le C_\lambda |k|^2 \nsm \hat{\solU}\nsm^2_{L^2_{\gamma+2s}}
+
 C \nsm\{\FI-\FP\}\hat{\solU}\nsm_{L^2(B_{C})}^2
+
\left| \langle w^{2\wN} \{\FI-\FP\}\hat{\sourceG},\{\FI-\FP\} \hat{\solU}\rangle \right|.
\end{multline}
We split the solution $\solU$ to equation \eqref{ls} into $\solU=\FP \solU + \{\FI-\FP\}\solU$, take the Fourier transform as in \eqref{ls-1f},  and then apply $\{\FI-\FP\}$ to the resulting equation:
\begin{multline}\notag
\pa_t \{\FI-\FP\}\hat{\solU} + \rmi \vel \cdot k \{\FI-\FP\}\hat{\solU} 
+
\FL\{\FI-\FP\} \hat{\solU} 
=\{\FI-\FP\}\hat{\sourceG}
 \\
-\{\FI-\FP\}(\rmi \vel \cdot k \FP \hat{\solU} )
 +\FP (\rmi \vel \cdot k \{\FI-\FP\}\hat{\solU}).
\end{multline}
Multiply the last equation by $w^{2\wN} \overline{\{\FI-\FP\}\hat{\solU}}$ and integrate in $\vel$ to obtain
\begin{equation}
\label{app.vw.p05.int1.new}
\frac{1}{2}\frac{d}{dt}\nsm w^{\wN}\{\FI-\FP\}\hat{\solU}(t,k)\nsm^2_{L^2} +
 \rmre \langle w^{2\wN}\FL\{\FI-\FP\} \hat{\solU},\{\FI-\FP\} \hat{\solU}\rangle
 =
 \Ga_1+ \Ga_2,
\end{equation}
where $ \Ga_1 = \rmre \langle w^{2\wN} \{\FI-\FP\}\hat{\sourceG},\{\FI-\FP\} \hat{\solU}\rangle$ and
\begin{multline*}
 \Ga_2 =
-\rmre\left\langle
\{\FI-\FP\}(\rmi \vel \cdot k\FP \hat{\solU} ),
w^{2\wN}  \{\FI-\FP\}\hat{\solU}
\right\rangle
\\
 +\rmre \left\langle
 \FP (\rmi \vel \cdot k \{\FI-\FP\}\hat{\solU}),
w^{2\wN}  \{\FI-\FP\}\hat{\solU}  \right\rangle.
\end{multline*}
We will estimate each of the
three terms in \eqref{app.vw.p05.int1.new}.

As a result of the rapid decay in the coefficients of \eqref{form.p} we obtain
$$
\left| \Ga_2 \right| \le
\eta \nsm w^{\wN}\{\FI-\FP\}\hat{\solU}\nsm^2_{L^2_{\gamma+2s}}
+
C_\eta |k|^2 \left( \nsm w^{-\wE}\{\FI-\FP\}\hat{\solU}\nsm^2_{L^2}+\nsm \FP \hat{\solU}\nsm^2_{L^2} \right), 
$$
which holds for any small $\eta >0$ and any large $\wE>0$. For the second estimate, we invoke \cite[Lemma 2.6]{gsNonCut0} to achieve the following coercive bound
$$
\rmre  \langle w^{2\wN}\FL\{\FI-\FP\} \hat{\solU},\{\FI-\FP\} \hat{\solU}\rangle
 \ge \la \nsm w^{\wN}\{\FI-\FP\}\hat{\solU}\nsm^2_{L^2_{\gamma+2s}}
 -
 C  \nsm\{\FI-\FP\}\hat{\solU}\nsm_{L^2(B_{C})}^2.
$$
(Now (2.10) in \cite[Lemma 2.6]{gsNonCut0} indeed holds for any $\ell \in \R$ as follows from \cite[Lemma 2.4 and Lemma 2.5]{gsNonCut0}.)  Plug the last estimates into \eqref{app.vw.p05.int1.new} to obtain \eqref{macroWeightINEQ}.

Note that following the procedure as above when $\wN =0$, using \eqref{coerc}, yields
\begin{equation}\label{macroWeightINEQ.noL}
\frac{d}{dt}\nsm \{\FI-\FP\}\hat{\solU}(t,k)\nsm^2_{L^2} +
\la \nsm \{\FI-\FP\}\hat{\solU}(t,k)\nsm^2_{L^2_{\gamma+2s}}
\lesssim
 |k|^2 \nsm \FP \hat{\solU}\nsm^2_{L^2}.
\end{equation}
This holds when $\sourceG=0$, it will be useful in the proof of Corollary \ref{cor.ls}.

We furthermore remark, following the same procedure as above, that we get
\begin{equation}\label{macroWeightINEQ.noM}
\frac{1}{2}\frac{d}{dt}\nsm w^{\wN}\hat{\solU}(t,k)\nsm^2_{L^2} +
\la \nsm w^{\wN}\hat{\solU}(t,k)\nsm^2_{L^2_{\gamma+2s}}
\le 
 C  \nsm\hat{\solU}\nsm_{L^2(B_{C})}^2
+
\left| \langle w^{2\wN} \hat{\sourceG}, \hat{\solU}\rangle \right|.
\end{equation}
In other words, if we multiply \eqref{ls-1f} by $w^{2\wN}\overline{\hat{\solU}(t,k)}$, integrate in $\vel$ and use the same estimates as in the last case it follows that we obtain \eqref{macroWeightINEQ.noM}.

\subsubsection{Estimate on the macroscopic dissipation}\label{macro.diss.d}
In this section, we recall some arguments from \cite{D-Hypo} which are used to estimate the macroscopic dissipation, in the spirit of \cite{MR1057534}.   Now the form \eqref{form.p} of the orthogonal projection $\FP$ implies the identities
\begin{equation}
\label{coef.p.def}
\begin{split}
  \dis   & a= \langle \MM, \solU\rangle= \langle \MM, \FP \solU\rangle,
  \\
  \dis  & b_i=\langle \vel_i \MM, \solU\rangle
=\langle \vel_i \MM,\FP \solU\rangle,
\\
  \dis & c= \frac{1}{2\Ndim}\langle (|\vel|^2-\Ndim) \MM, \solU\rangle
= \frac{1}{2\Ndim}\langle (|\vel|^2-\Ndim) \MM, \FP \solU\rangle.
    \end{split}
\end{equation}
We will also use the following high-order moment functions $\highG(\solU)=(\highG_{ij}(\solU))_{\Ndim\times \Ndim}$ and 
$\highB(\solU)=(\highB_1(\solU),\cdots,\highB_\Ndim(\solU))$ which are given by
\begin{equation}
  \highG_{ij}(\solU) = \langle(\vel_i\vel_j-1)\MM, \solU\rangle,\ \
  \highB_i(\solU)=\langle(|\vel|^2-\Ndim - 2)\vel_i\MM,
  \solU\rangle.\label{def.gala}
\end{equation}
Now as in \cite{D-Hypo} these high-order moment functions satisfy some mixed hyperbolic-parabolic equations.  In the case when $\FP g=0$, the equations for these moment functions can be used to prove the following lemma from \cite[Lemma 4.1]{D-Hypo}:

\begin{lemma}
There is a time-frequency functional $\CE_{\rm int}(t,k)$ defined by 
\begin{eqnarray}
  \label{def.int1}
  \CE_{\rm int}(t,k) &=& \frac{1}{1+|k|^2}\sum_{i=1}^{\Ndim}\frac{1}{2} (\rmi k_i \hat{c}\mid \highB_i(\{\FI-\FP\}\hat{\solU}))\\
  \notag
  &&+\frac{\ka_1}{1+|k|^2}\sum_{i,j=1}^{\Ndim}(\rmi k_i \hat{b}_j+\rmi k_j\hat{b}_i\mid \frac{1}{2} \highG_{ij}(\{\FI-\FP\}\hat{\solU})+2\hat{c}\de_{ij})\\
  && +\frac{\ka_2}{1+|k|^2}\sum_{i=1}^{\Ndim}\left(\rmi k_i \hat{a}\mid \hat{b}_i\right),
\notag
\end{eqnarray}
with two properly chosen constants $0<\kappa_2\ll\kappa_1\ll 1$ such that
\begin{equation}
\dis \pa_t \rmre \CE_{\rm int}(t,k)+\frac{\la |k|^2}{1+|k|^2} \left(|\widehat{a}|^2+|\hat{b}|^2+|\hat{c}|^2\right)
\dis 
\lesssim
\left| \ang{\{\FI-\FP\}\hat{\solU}, \Rap} \right|^2
+
\left| \ang{\hat{\sourceG}, \Rap} \right|^2,
\label{diss-macro+}
\end{equation}
holds for any $t\geq 0$, and $k\in \threed$.  Now the $\{ \Rap_{\ArbI} \}$ are the smooth exponentially decaying velocity basis vectors which are contained in \eqref{form.p} and \eqref{def.gala}, and $\Rap$ is a linear combination of the $\{ \Rap_{\ArbI} \}$ whose precise form is unimportant herein.  
\end{lemma}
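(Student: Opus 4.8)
The plan is to transcribe the interactive-functional argument of Duan \cite[Lemma 4.1]{D-Hypo}, which is in the spirit of Kawashima's compensating-function method \cite{MR1057534}, checking along the way that the purely microscopic source $\hat{\sourceG} = \{\FI-\FP\}\hat{\sourceG}$ enters only through the harmless terms $|\ang{\hat{\sourceG},\Rap}|^2$ appearing in \eqref{diss-macro+}.  The only structural input on the collision operator is that $\FL$ annihilates its null space $\CN$ and is self-adjoint on $L^2_\vel$; since this remains true without angular cut-off, the macroscopic equations below are unchanged from the cut-off theory, and in particular the coercivity \eqref{coerc} is not needed in this lemma.

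First I would derive the macroscopic (local conservation law) equations.  Testing the Fourier-transformed system \eqref{ls-1f} against $\MM$, $\vel_i\MM$ and $(|\vel|^2-\Ndim)\MM$, and using \eqref{coef.p.def}, the self-adjointness of $\FL$ together with $\FL(\CN) = 0$, and $\FP\hat{\sourceG} = 0$, one obtains a closed first-order system for $(\hat{a},\hat{b},\hat{c})$ of the schematic form
\begin{align*}
&\pa_t\hat{a} + \rmi k\cdot\hat{b} = 0, \\
&\pa_t\hat{b}_i + \rmi k_i(\hat{a} + 2\hat{c}) + \sum_j \rmi k_j\, \highG_{ij}(\{\FI-\FP\}\hat{\solU}) = \ang{\vel_i\MM, \hat{\sourceG}}, \\
&\pa_t\hat{c} + \tfrac{1}{\Ndim}\rmi k\cdot\hat{b} + \tfrac{1}{2\Ndim}\sum_i \rmi k_i\, \highB_i(\{\FI-\FP\}\hat{\solU}) = \tfrac{1}{2\Ndim}\ang{(|\vel|^2-\Ndim)\MM, \hat{\sourceG}}.
\end{align*}
Testing \eqref{ls-1f} further against $(\vel_i\vel_j - 1)\MM$ and $(|\vel|^2-\Ndim-2)\vel_i\MM$ as in \eqref{def.gala} yields mixed hyperbolic--parabolic equations of the schematic form $\pa_t\highG_{ij}(\{\FI-\FP\}\hat{\solU}) = -(\rmi k_i\hat{b}_j + \rmi k_j\hat{b}_i) + \ang{\{\FI-\FP\}\hat{\solU},\Rap_\ArbI} + \ang{\hat{\sourceG},\Rap_\ArbI}$ and $\pa_t\highB_i(\{\FI-\FP\}\hat{\solU}) = c_0\,\rmi k_i\hat{c} + \ang{\{\FI-\FP\}\hat{\solU},\Rap_\ArbI} + \ang{\hat{\sourceG},\Rap_\ArbI}$, where each $\Rap_\ArbI$ is one of the fixed smooth exponentially decaying velocity vectors from \eqref{form.p}, \eqref{def.gala} (the $\FL$-moments contribute only to the $\ang{\{\FI-\FP\}\hat{\solU},\cdot}$ pieces because $\FL(\cdot)\perp\CN$).

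Then I would differentiate $\rmre\CE_{\rm int}(t,k)$ from \eqref{def.int1} in time and substitute.  With the sign and weight conventions inherited from \cite{D-Hypo}, the three groups are engineered so that, modulo cross terms and moments of $\{\FI-\FP\}\hat{\solU}$ and $\hat{\sourceG}$: the leading group produces $+\la\tfrac{|k|^2}{1+|k|^2}|\hat{c}|^2$ (via $\pa_t\highB_i \sim c_0\rmi k_i\hat{c}$); the $\ka_1$ group produces $+\la\ka_1\tfrac{|k|^2}{1+|k|^2}|\hat{b}|^2$ (via $\pa_t\highG_{ij} \sim -(\rmi k_i\hat{b}_j + \rmi k_j\hat{b}_i)$ together with $\sum_{i,j}|k_i\hat{b}_j + k_j\hat{b}_i|^2 \ge 2|k|^2|\hat{b}|^2$); and the $\ka_2$ group produces $+\la\ka_2\tfrac{|k|^2}{1+|k|^2}|\hat{a}|^2$ (via $\pa_t\hat{a} = -\rmi k\cdot\hat{b}$ and $\pa_t\hat{b}_i \sim -\rmi k_i(\hat{a} + 2\hat{c})$).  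Choosing $0 < \ka_2 \ll \ka_1 \ll 1$ gives the triangular absorption: the $O(\tfrac{|k|^2}{1+|k|^2})$ errors involving $|\hat{b}|^2$ and $|\hat{c}|^2$ generated at the $\ka_2$-level are dominated by the good $\ka_1$ and leading terms, and the errors involving $|\hat{c}|^2$ generated at the $\ka_1$-level are dominated by the leading term.  Every remaining term is, after using $\tfrac{|k|^2}{1+|k|^2}\le 1$, $\tfrac{1}{1+|k|^2}\le 1$, and Young's inequality, bounded by $|\ang{\{\FI-\FP\}\hat{\solU},\Rap}|^2 + |\ang{\hat{\sourceG},\Rap}|^2$ for a suitable linear combination $\Rap$ of the basis vectors $\{\Rap_\ArbI\}$; this yields \eqref{diss-macro+} uniformly in $k\in\threed$, the normalization $\tfrac{1}{1+|k|^2}$ being precisely what makes the high-frequency estimates uniform (and what keeps $\CE_{\rm int}$ small relative to $\nsm\hat{\solU}\nsm_{L^2}^2$).

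The main obstacle is the bookkeeping of the numerous cross terms and making the triangular choice $\ka_2 \ll \ka_1 \ll 1$ close --- in particular checking that the symmetrized gradient $\rmi k_i\hat{b}_j + \rmi k_j\hat{b}_i$ recovers the full $|k|^2|\hat{b}|^2$ and that the couplings between the $\highB$, $\highG$ equations and the $\hat{c}$ equation do not spoil the sign of the leading terms.  All of this is carried out in detail in \cite[Lemma 4.1]{D-Hypo}; since that proof uses only the macroscopic structure, which is unchanged here, the sole new point is to carry along the microscopic source, which cannot enter the macroscopic equations destructively because $\FP\hat{\sourceG} = 0$.
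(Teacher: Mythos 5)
Your proposal takes the same route the paper does: both delegate the real work to \cite[Lemma~4.1]{D-Hypo}, observing that the only needed structural inputs on $\FL$ are that it is self-adjoint and annihilates $\CN$, so the macroscopic moment equations are unchanged from the cut-off setting, and that $\FP\hat{\sourceG}=0$ routes the source harmlessly into $|\ang{\hat\sourceG,\Rap}|^2$. The paper's ``proof'' is in fact even thinner than yours --- it is a citation together with the single remark that the upper bound in \eqref{diss-macro+} is slightly weaker than the one in \cite{D-Hypo}, so that following Duan's proof directly gives it; your sketch of the testing/differentiation/triangular-absorption argument is a faithful expansion of what that reference contains.

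One small inaccuracy worth fixing: in your schematic conservation laws, the right-hand sides $\ang{\vel_i\MM,\hat\sourceG}$ and $\ang{(|\vel|^2-\Ndim)\MM,\hat\sourceG}$ are identically zero, since those test vectors lie in $\CN$ and $\hat\sourceG$ is purely microscopic. The source only genuinely enters through the higher-moment equations for $\highG_{ij}$ and $\highB_i$ (whose test vectors are not in $\CN$); this is consistent with your closing observation that $\hat\sourceG$ cannot enter the macroscopic equations destructively, but the displays should show zeros there rather than leaving the source terms present.
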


Note that in \eqref{diss-macro+} the upper bound is slightly different from what is written in the statement of \cite[Lemma 4.1]{D-Hypo}. However following the proof of \cite[Lemma 4.1]{D-Hypo} it is easy to see that our weaker upper bound indeed holds true.

\subsubsection{Derivation of the weighted time-frequency Lyapunov inequality}
In this subsection, we collect all of the disparate estimates from the previous subsections to prove in Theorem \ref{thm.tfli}
the main weighted time-frequency energy inequality.

\begin{theorem}\label{thm.tfli}
Fix $\wN\in \R$.
Let $\solU$ be the solution to the Cauchy problem \eqref{ls}
with $g=0$. Then there is a weighted time-frequency functional $\CE_\wN(t,k)$
such that
\begin{equation}\label{thm.tfli.1}
    \CE_\wN(t,k) \approx \nsm w^\ell \hat{\solU}(t,k)\nsm_{L^2}^2,
\end{equation}
where for any $t\geq 0$ and $k\in \threed$ we have 
\begin{equation}\label{thm.tfli.2}
\dis\pa_t \CE_{\wN} (t,k)+\la \left( 1 \wedge |k|^2\right) \nsm w^{\wN}  \hat{\solU}(t,k)\nsm_{L^2_{\gamma+2s}}^2
\le 0.
\end{equation}
We use the notation $1 \wedge |k|^2 \eqdef\min\{1,|k|^2\}$.
\end{theorem}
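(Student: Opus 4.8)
The plan is to build $\CE_\wN(t,k)$ as a carefully balanced linear combination of the three functionals already at hand: the weighted microscopic energy $\nsm w^\wN\{\FI-\FP\}\hat\solU\nsm_{L^2}^2$ controlled by \eqref{macroWeightINEQ}, the full weighted energy $\nsm w^\wN\hat\solU\nsm_{L^2}^2$ controlled by \eqref{macroWeightINEQ.noM}, the unweighted microscopic energy $\nsm\{\FI-\FP\}\hat\solU\nsm_{L^2}^2$ with \eqref{diss-micr}, and the interaction functional $\rmre\CE_{\rm int}(t,k)$ from \eqref{diss-macro+}. Because the excerpt stresses that the solution behaves differently for $|k|\le 1$ and $|k|>1$, I would define $\CE_\wN(t,k)$ by two separate formulas in these two regimes and check the norm equivalence \eqref{thm.tfli.1} and the differential inequality \eqref{thm.tfli.2} separately in each. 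In the $|k|\le 1$ regime the localized error terms $\nsm\{\FI-\FP\}\hat\solU\nsm_{L^2(B_C)}^2$ are absorbed by the coercive microscopic dissipation (since $L^2_{\gamma+2s}(B_C)\approx L^2(B_C)$ on a bounded ball and $1\wedge|k|^2=|k|^2$), while the macroscopic piece $|\hat a|^2+|\hat b|^2+|\hat c|^2$ is recovered from the interaction term \eqref{diss-macro+}, whose $|k|^2/(1+|k|^2)\approx|k|^2$ prefactor matches $1\wedge|k|^2$ there. In the $|k|>1$ regime, $1\wedge|k|^2=1$ and the factor $|k|^2$ appearing in front of the "bad" terms on the right of \eqref{macroWeightINEQ} and \eqref{macroWeightINEQ.noM} is no longer small, so one instead leans on \eqref{macroWeightINEQ.noM} directly: the full weighted $L^2_{\gamma+2s}$ dissipation dominates $\nsm\hat\solU\nsm^2_{L^2(B_C)}$ up to a small constant, and the interaction functional is again used (now with $|k|^2/(1+|k|^2)\approx 1$) to close the macroscopic part, or one simply observes that for large $|k|$ the $\rmi v\cdot k$ streaming term provides enough structure that a much coarser combination suffices.

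Concretely I would set, for $|k|\le 1$,
\begin{equation}\notag
\CE_\wN(t,k)\eqdef \nsm w^\wN\{\FI-\FP\}\hat\solU(t,k)\nsm_{L^2}^2
+ A\nsm\{\FI-\FP\}\hat\solU(t,k)\nsm_{L^2}^2
+ B\,\rmre\CE_{\rm int}(t,k),
\end{equation}
with $1\gg B\gg$ (reciprocal of) $A\gg 1$ chosen in the usual hierarchical order: first fix $A$ large enough that \eqref{diss-micr} (with $\sourceG=0$) absorbs the localized error term $C\nsm\{\FI-\FP\}\hat\solU\nsm_{L^2(B_C)}^2$ and the $C_\eta|k|^2\nsm w^{-\wE}\{\FI-\FP\}\hat\solU\nsm^2_{L^2}$ term coming from $\Ga_2$ in \eqref{macroWeightINEQ}; then fix $B$ small enough that the interaction term's right-hand side $|\ang{\{\FI-\FP\}\hat\solU,\Rap}|^2 \lesssim \nsm\{\FI-\FP\}\hat\solU\nsm^2_{L^2_{\gamma+2s}}$ (using the exponential decay of $\Rap$ and the inclusion $L^2_{\gamma+2s}\subset$ weighted-against-$\Rap$) is absorbed by the dissipation already gained, while $B$ is still large enough that \eqref{diss-macro+} contributes a genuine $\la|k|^2(|\hat a|^2+|\hat b|^2+|\hat c|^2)$ of macroscopic dissipation. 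The term $C_\lambda|k|^2\nsm\hat\solU\nsm^2_{L^2_{\gamma+2s}}$ on the right of \eqref{macroWeightINEQ} splits as $|k|^2\nsm\{\FI-\FP\}\hat\solU\nsm^2_{L^2_{\gamma+2s}} + |k|^2\nsm\FP\hat\solU\nsm^2_{L^2_{\gamma+2s}}$; the first is absorbed for $|k|\le 1$ into the $A$-term dissipation, the second into the macroscopic dissipation from $B\CE_{\rm int}$. Finally $\nsm w^\wN\hat\solU\nsm^2_{L^2_{\gamma+2s}}\lesssim \nsm w^\wN\{\FI-\FP\}\hat\solU\nsm^2_{L^2_{\gamma+2s}}+\nsm\FP\hat\solU\nsm^2_{L^2_{\gamma+2s}}$ (the last by equivalence of norms on the finite-dimensional null space and the exponential weight) so all three pieces of the left-hand dissipation in \eqref{thm.tfli.2} are produced. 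Norm equivalence \eqref{thm.tfli.1} holds because $|\CE_{\rm int}|\lesssim \nsm\{\FI-\FP\}\hat\solU\nsm^2_{L^2}+(|\hat a|^2+|\hat b|^2+|\hat c|^2)\lesssim \nsm w^\wN\hat\solU\nsm^2_{L^2}$, so the extra terms are lower-order perturbations of the leading $\nsm w^\wN\{\FI-\FP\}\hat\solU\nsm^2_{L^2}+A\nsm\{\FI-\FP\}\hat\solU\nsm^2_{L^2}$ which together with $B$ small are comparable to $\nsm w^\wN\hat\solU\nsm^2_{L^2}$ once the macroscopic part is added back via the coefficient identities \eqref{coef.p.def}.

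For $|k|>1$ I would instead take $\CE_\wN(t,k)\eqdef \nsm w^\wN\hat\solU(t,k)\nsm^2_{L^2}+ B'\rmre\CE_{\rm int}(t,k)$ with $B'\ll 1$: \eqref{macroWeightINEQ.noM} (with $\sourceG=0$) gives $\frac{d}{dt}\nsm w^\wN\hat\solU\nsm^2_{L^2}+\la\nsm w^\wN\hat\solU\nsm^2_{L^2_{\gamma+2s}}\le C\nsm\hat\solU\nsm^2_{L^2(B_C)}$, and on $B_C$ we have $\nsm\hat\solU\nsm_{L^2(B_C)}\approx \nsm w^\wN\hat\solU\nsm_{L^2_{\gamma+2s}(B_C)}$ only up to a constant, so this localized term is NOT directly absorbable — and this is where the macroscopic functional is essential. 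Splitting $\nsm\hat\solU\nsm^2_{L^2(B_C)}\lesssim \nsm\{\FI-\FP\}\hat\solU\nsm^2_{L^2(B_C)}+(|\hat a|^2+|\hat b|^2+|\hat c|^2)$ and using \eqref{coerc}/\eqref{macroWeightINEQ.noL} to control the microscopic piece by a multiple of $\nsm w^\wN\hat\solU\nsm^2_{L^2_{\gamma+2s}}$ plus $|k|^2\nsm\FP\hat\solU\nsm^2_{L^2}$, together with \eqref{diss-macro+} which for $|k|>1$ yields macroscopic dissipation with prefactor $\la|k|^2/(1+|k|^2)\ge\la/2$, one closes the loop since $1\wedge|k|^2=1$ there. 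The main obstacle I anticipate is precisely this bookkeeping in the $|k|>1$ regime: ensuring the localized low-velocity error terms, which the pure coercivity estimate only dominates up to an $O(1)$ constant rather than with a small constant, get genuinely absorbed — this forces the two-regime definition and a delicate ordering of the constants $A,B,B'$, and is the reason the theorem cannot be proven with a single functional valid for all $k$. Matching the interaction functional's $|k|^2/(1+|k|^2)$ weight to the target $1\wedge|k|^2$ on both sides of $|k|=1$ is the crux, and once that is arranged the two local differential inequalities patch together (the functional $\CE_\wN$ need not be continuous across $|k|=1$ since \eqref{thm.tfli.2} is required only pointwise in $k$).
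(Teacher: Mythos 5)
Your high-level plan is the right one — a two-regime split at $|k|=1$, with the weighted microscopic piece from \eqref{macroWeightINEQ} used for $|k|\le 1$, the fully weighted piece from \eqref{macroWeightINEQ.noM} used for $|k|>1$, and $\CE_{\rm int}$ from \eqref{diss-macro+} supplying the macroscopic dissipation in both — and this is essentially what the paper does. But the construction you write down for $|k|\le 1$ has a genuine gap that traces back to a misreading of \eqref{diss-micr}.

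You describe \eqref{diss-micr} as the Lyapunov inequality controlling "the unweighted microscopic energy $\nsm\{\FI-\FP\}\hat\solU\nsm_{L^2}^2$", and accordingly take $A\nsm\{\FI-\FP\}\hat\solU\nsm_{L^2}^2$ as the leading piece of $\CE_\wN(t,k)$ on $|k|\le 1$. But \eqref{diss-micr} differentiates the \emph{full} unweighted energy $\nsm\hat\solU(t,k)\nsm_{L^2}^2$; the microscopic quantity appears there only in the dissipation, not under the time derivative. This matters for two reasons. First, your proposed functional $\nsm w^{\wN}\{\FI-\FP\}\hat\solU\nsm^2_{L^2}+A\nsm\{\FI-\FP\}\hat\solU\nsm^2_{L^2}+B\,\rmre\CE_{\rm int}$ fails the required equivalence \eqref{thm.tfli.1}: take $\hat\solU=\FP\hat\solU$ purely macroscopic, so both microscopic terms vanish, and $B\,\rmre\CE_{\rm int}$ (with $B$ small and $\CE_{\rm int}$ neither positive definite nor comparable to the macroscopic energy) cannot reproduce $\nsm w^\wN\hat\solU\nsm^2_{L^2}\approx|\hat a|^2+|\hat b|^2+|\hat c|^2$. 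Second, the only inequality in the paper that does bound $\tfrac{d}{dt}\nsm\{\FI-\FP\}\hat\solU\nsm^2_{L^2}$ is \eqref{macroWeightINEQ.noL}, and it carries the source term $|k|^2\nsm\FP\hat\solU\nsm^2_{L^2}$ on the right; to absorb that you would need the $B$-weighted macroscopic dissipation from \eqref{diss-macro+} to dominate $A|k|^2\nsm\FP\hat\solU\nsm^2_{L^2}$, i.e. $A\lesssim B$, while simultaneously you argue for $B$ small relative to the $A$-term's dissipation so as to absorb $|\ang{\{\FI-\FP\}\hat\solU,\Rap}|^2$. These two requirements are contradictory, so the constant hierarchy cannot be closed.

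The fix is exactly what the paper does: replace $A\nsm\{\FI-\FP\}\hat\solU\nsm^2_{L^2}$ by the full $\nsm\hat\solU(t,k)\nsm^2_{L^2}$ and use \eqref{diss-micr} for it. Since \eqref{diss-micr} already differentiates the full energy and produces only microscopic dissipation with no extra $|k|^2\nsm\FP\hat\solU\nsm^2_{L^2}$ on the right, a single small $\kappa_3$ in front of $\CE_{\rm int}$ absorbs $|\ang{\{\FI-\FP\}\hat\solU,\Rap}|^2$ into that dissipation without circularity, giving the paper's auxiliary inequality \eqref{instant.noW}. Adding $\kappa_4\nsm w^\wN\{\FI-\FP\}\hat\solU\nsm^2_{L^2}$ on $|k|\le 1$ (resp.\ $\kappa_5\nsm w^\wN\hat\solU\nsm^2_{L^2}$ on $|k|>1$) with small $\kappa_4,\kappa_5$ then restores the weight while preserving \eqref{thm.tfli.1}, because the macroscopic part is already present in $\nsm\hat\solU\nsm^2_{L^2}$ and is weight-insensitive due to the Maxwellian decay of the basis vectors. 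Your $|k|>1$ piece, which does keep $\nsm w^\wN\hat\solU\nsm^2_{L^2}$, is closer to correct; it is the $|k|\le 1$ piece that needs the full unweighted energy reinstated.
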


\begin{proof}
We first define 
\begin{equation}\label{thm.tfli.p1}
    \CE(t,k)\eqdef \nsm\hat{\solU}(t,k)\nsm_{L^2}^2
    +\kappa_3 \CE_{\rm int}(t,k),
\end{equation}
for a constant $\kappa_3>0$ to be determined later, where $\CE_{\rm int}(t,k)$ is given in \eqref{def.int1}. 
One can fix $\kappa_3>0$ small enough such that 
$
\CE(t,k)\approx \nsm\hat{\solU}(t,k)\nsm_{L^2}^2.
$

A linear combination of \eqref{diss-micr} and \eqref{diss-macro+} according to \eqref{thm.tfli.p1}  implies  that
\begin{equation}\label{instant.noW}
\dis\pa_t \CE(t,k)+\la \nsm\{\FI-\FP\}\hat{\solU}\nsm_{L^2_{\gamma+2s}}^2
+\frac{\la |k|^2}{1+|k|^2}(|\hat{a}|^2
+|\hat{b}|^2+|\hat{c}|^2)
\le 0,
\end{equation}
where note further that 
$
|\hat{a}|^2+|\hat{b}|^2+|\hat{c}|^2
\gtrsim
\nsm
\FP \hat{\solU}\nsm_{L^2_\wN}^2 
$
holding $\forall \wN\in \R$.

To do the weighted estimates, in particular for the soft potentials \eqref{kernelPsing}, we need to introduce a new energy splitting as follows.
With \eqref{thm.tfli.p1} we define 
\begin{equation}\notag
\begin{split}
    \CE_\wN^0(t,k)\eqdef &
    \ind_{|k|\le 1} \left(
    \CE(t,k)
    +\kappa_4  \nsm w^{\wN}\{\FI-\FP\}\hat{\solU}(t,k)\nsm_{L^2}^2 \right),
    \\
        \CE_\wN^1(t,k)\eqdef &
    \ind_{|k|> 1} \left(
    \CE(t,k)
    +\kappa_5  \nsm w^{\wN}\hat{\solU}(t,k)\nsm_{L^2}^2 \right).
    \end{split}
\end{equation}
Again $ \kappa_4, \kappa_5>0$ will be determined later.

We prove estimates for each of these individually.  For $\CE^1_{\wN}(t,k)$ we combine \eqref{instant.noW} with \eqref{macroWeightINEQ.noM} for $|k|> 1$ to  obtain for a suitably small $\kappa_5>0$  that
$$
\partial_t  \mathcal{E}^1_{\wN}(t,k) 
+ \la  \nsm w^\wN  \hat{\solU}(t,k)\nsm_{L^2_{\gamma + 2s}}^2 \ind_{|k|> 1} 
\le 0.
$$
Here we have used the fact that when $|k|> 1$ then 
$
\frac{ |k|^2}{1+|k|^2} \ge \frac{1}{2}.
$

Furthermore, when $|k|\le 1$ it holds that
$
\frac{ |k|^2}{1+|k|^2} \ge \frac{|k|^2}{2}.
$
In this case we combine \eqref{instant.noW} with \eqref{macroWeightINEQ}  on $|k|\le 1$ to  obtain for a small $\kappa_4>0$  that
\begin{equation}\notag
\partial_t  \mathcal{E}_{\wN}^0(t,k) 
+ \la |k|^2 \nsm w^\wN  \hat{\solU}(t,k)\nsm_{L^2_{\gamma + 2s}}^2  \ind_{|k|\le 1} 
\le 0.
\end{equation}
Lastly we define 
$
\mathcal{E}_{\wN}(t,k) 
\eqdef
\mathcal{E}_{\wN}^0(t,k) 
+
\mathcal{E}_{\wN}^1(t,k) 
$
and we notice that \eqref{thm.tfli.1} is satisfied.
Then \eqref{thm.tfli.2} follows from adding the previous two differential inequalities.
\end{proof}

\subsection{Proof of time decay of linear solutions}\label{sec.tf}
In this subsection we prove Theorem \ref{thm.ls} based on Theorem \ref{thm.tfli}.
For the soft-potentials \eqref{kernelPsing}, the time-frequency dissipation is weaker than $\CE_\wN(t,k)$ so we close the estimates using interpolation.

\begin{proof}[Proof of Theorem \ref{thm.ls}]  
Firstly we restrict to the hard potentials \eqref{kernelP}.
We define 
$
  \fcnP(k)\eqdef
    \la \left( 1 \wedge |k|^2\right).
$
Since $\gamma + 2s \ge 0$ and $\sourceG=0$, 
we can rewrite \eqref{thm.tfli.2}, for any $t\geq 0$ and $k\in \threed$, as
\begin{equation*}
    \pa_t \CE_\wN(t,k)+\fcnP(k)  \CE_\wN(t,k)
\le 0.
\end{equation*}
The Gronwall inequality then gives us that
\begin{equation}\label{thm.ls.p02}
   \CE_\wN(t,k)\leq e^{-\fcnP(k)t}\CE_\wN(0,k).
\end{equation}
As a result of \eqref{thm.tfli.1}, for any fixed $m\geq 0$ and $\wN \in \R$ we notice that
\begin{equation}\label{thm.ls.p08}
    \|w^{\wN} \solU(t)\|_{\CHd^m}^2 \approx \int_{\threed} |k|^{2m}\CE_\wN(t,k)dk.
\end{equation}
Now, to prove \eqref{thm.ls.1}, we can apply \eqref{thm.ls.p02}.  
Here, notice that for $|k|\leq 1$,
$
\fcnP(k)\geq \la|k|^2,
$
and for $|k|\geq 1$,
$
\fcnP(k)\geq \la.
$
With these two estimates we have the upper bound
$$
 \int_{\threed}dk~ |k|^{2m}\CE_\wN(t,k) \leq   
 \int_{|k|\leq 1}dk~ |k|^{2m}e^{-\la |k|^2t}\CE_\wN(0,k)
 +e^{-\la t} \int_{|k|\geq 1}dk~ |k|^{2m}\CE_\wN(0,k).
$$
In \eqref{ineqW} from Appendix \ref{secAPP:HHY}, using the H\"{o}lder and Hausdorff-Young inequalities, we showed that the integration over $|k|\leq 1$ is bounded for $1\leq r \leq 2$ as follows
$$
 \int_{|k|\leq 1}|k|^{2m}e^{-\la |k|^2t}\CE_\wN(0,k)dk
\lesssim (1+t)^{-2\sigma_{r,m}}
\|w^{\wN}\solU_0\|_{Z_r}^2.
$$
Here we recall \eqref{rateLIN}.  For the integration over $|k|\geq 1$:
\begin{equation}
\notag
 \int_{|k|\geq 1}|k|^{2m}e^{-\la t} \CE_\wN(0,k)dk
 \lesssim
 e^{-\la t} \|w^\wN\solU_0\|_{\CHd^m}^2.
\end{equation}
Collecting the above estimates as well as \eqref{thm.ls.p08} gives \eqref{thm.ls.1}.

It remains to prove \eqref{thm.ls.1.soft} for the soft potentials \eqref{kernelPsing}.  
By \eqref{thm.tfli.2} we have that
$
\mathcal{E}_{\wN}(t,k)
\le
\mathcal{E}_{\wN}(0,k)
$
for any $\wN \in \R$.
On the other hand notice that \eqref{thm.tfli.2} is insufficient because $\gamma + 2s < 0$, and in this case $\mathcal{E}_{\wN}(t,k)$ is not controlled by $\nsm w^\wN  \hat{\solU}(t,k)\nsm_{L^2_{\gamma+2s}}^2$.
To resolve this difficulty we interpolate with a family of norms.  

In particular, for $\wE>0$, using \eqref{thm.tfli.1} and \eqref{weigh} we have
$$
\mathcal{E}_{\wN}(t,k)
\lesssim
 \mathcal{E}_{\wN-1}^{\wE/(\wE+1)}(t,k) ~ \mathcal{E}_{\wN+\wE}^{1/(\wE+1)}(t,k)
\lesssim
\nsm w^\wN  \hat{\solU}(t,k)\nsm_{L^2_{\gamma+2s}}^{2 \wE/(\wE+1)}\mathcal{E}_{\wN+\wE}^{1/(\wE+1)}(t,k).
$$
We therefore conclude that
$$
\mathcal{E}_{\wN}^{(\wE+1)/\wE}(t,k)
\lesssim
\nsm w^\wN  \hat{\solU}(t,k)\nsm_{L^2_{\gamma+2s}}^{2 }\mathcal{E}_{\wN+\wE}^{1/\wE}(t,k)
\lesssim
\nsm w^\wN  \hat{\solU}(t,k)\nsm_{L^2_{\gamma+2s}}^{2 }\mathcal{E}_{\wN+\wE}^{1/\wE}(0,k).
$$
Now we can rewrite \eqref{thm.tfli.2}, for any  $k\in \threed$, as
\begin{equation*}
    \pa_t \CE_{\wN}(t,k)+ \la \fcnP(k)  \mathcal{E}_{\wN}^{(\wE+1)/\wE}(t,k)\mathcal{E}_{\wN+\wE}^{-1/\wE}(0,k)  \leq 0.
\end{equation*}
To prove \eqref{thm.ls.1.soft}, one can bound $\CE_{\wN}(t,k)$ as follows
\begin{equation*}
    \pa_t \CE_{\wN}(t,k)  \mathcal{E}_{\wN}^{-1-1/\wE}(t,k)    \lesssim -\fcnP(k)  \mathcal{E}_{\wN+\wE}^{-1/\wE}(0,k).
\end{equation*}
Integrating this over time, we obtain  
\begin{equation*}
    \wE \CE_{\wN}^{-1/\wE}(0,k) - \wE \mathcal{E}_{\wN}^{-1/\wE}(t,k)    \lesssim - t \fcnP(k)  \mathcal{E}_{\wN+\wE}^{-1/\wE}(0,k).
\end{equation*}
For any $\wN \in \R$ and $\wE>0$, uniformly in $k\in \threed$, we have shown that
\begin{equation*}
     \mathcal{E}_\wN(t,k)    \lesssim
    \mathcal{E}_{\wN+\wE}(0,k) 
    \left(\frac{ t \fcnP(k)}{\wE}   + 1\right)^{-\wE}.
\end{equation*}
We also just used the estimate $\mathcal{E}_{\wN}(0,k) \lesssim\mathcal{E}_{\wN+\wE}(0,k)$.

As before, we integrate over $k$ and split into $|k|\leq 1$ and $|k|> 1$ to achieve
\begin{equation*}
\int_{|k| > 1}dk~ |k|^{2m}
     \mathcal{E}_\wN(t,k)   \lesssim
     \left(\frac{ t }{\wE}   + 1\right)^{-\wE}
  \int_{|k|> 1}dk~ |k|^{2m}  \mathcal{E}_{\wN+\wE}(0,k).
\end{equation*}
Alternatively, when $|k|\leq 1$ we choose $\wE$ to satisfy $\wE > 2 \sigma_{r,m}$ and obtain  
\begin{multline*}
\int_{|k| \leq 1}dk~ |k|^{2m}
     \mathcal{E}_\wN(t,k)    
\lesssim     
     \int_{|k| \leq 1}dk~ |k|^{2m}
    \mathcal{E}_{\wN+\wE}(0,k) 
    \left(\frac{ t |k|^2}{\wE}   + 1\right)^{-\wE}
    \\
    \lesssim     
    \left( 1   + t\right)^{-2\sigma_{r,m}}
     \| w^{\wN+\wE} \solU_0\|_{Z_r}^2.
\end{multline*}
For $1\le r \le 2$, this last inequality again uses \eqref{ineqW} in Appendix \ref{secAPP:HHY}.
\end{proof}

Next we give our proof of Corollary \ref{cor.ls}, using different methods.

\begin{proof}[Proof of Corollary \ref{cor.ls}]  To prove time decay, now and in subsequent sections
without loss of generality we can suppose that $t\ge 1$.  
We will use a time-velocity splitting on the energy inequality from \eqref{macroWeightINEQ.noL}.  Recalling \eqref{weigh}, we define the sets
\begin{equation}
E(t) = \{w(v)\le  t^{p^\prime}\}, \quad  E^c(t)= \{ w(v) >  t^{p^\prime}\}.
\label{split.E}
\end{equation}
Here $p^\prime\ge 0$ will be chosen later. We initially restrict to the case of the soft potentials \eqref{kernelPsing}.  
For the dissipation term in the energy inequality \eqref{macroWeightINEQ.noL} we have
\begin{equation}
\frac{\nsm \ind_E \{\FI-\FP\}\hat{\solU}(t,k)\nsm^2_{L^2}}{ t^{p^\prime} }
\lesssim 
\nsm \ind_E \{\FI-\FP\}\hat{\solU}(t,k)\nsm^2_{L^2_{\gamma+2s}}.
\label{soft.up.d.lin}
\end{equation}
We plug \eqref{soft.up.d.lin} into \eqref{macroWeightINEQ.noL} to achieve
\begin{equation}\notag
\frac{d}{dt}\nsm \{\FI-\FP\}\hat{\solU}(t,k)\nsm^2_{L^2} 
+
\frac{\la^\prime}{t^{p^\prime} } \nsm \{\FI-\FP\}\hat{\solU}\nsm^2_{L^2}
\lesssim
 |k|^2 \nsm \FP \hat{\solU}\nsm^2_{L^2}
 +
 \frac{\la^\prime}{t^{p^\prime} } \nsm \ind_{E^c} \{\FI-\FP\}\hat{\solU}\nsm^2_{L^2}.
\end{equation}
Define $\lambda= \frac{\la^\prime}{ p } $ where now $p= -p^\prime+1>0$.  Then we have
\begin{equation*}
\frac{d}{dt}\nsm \{\FI-\FP\}\hat{\solU}(t,k)\nsm^2_{L^2} 
+
\lambda p t^{p-1} \nsm \{\FI-\FP\}\hat{\solU}\nsm^2_{L^2} 
\lesssim
 |k|^2 \nsm \FP \hat{\solU}\nsm^2_{L^2}
 +
t^{p-1} \nsm \ind_{E^c} \{\FI-\FP\}\hat{\solU}\nsm^2_{L^2}.
\end{equation*}
Equivalently 
\begin{equation*}
\frac{d}{dt}\left(e^{\lambda t^{p}} \nsm \{\FI-\FP\}\hat{\solU}(t,k)\nsm^2_{L^2}   \right) 
\lesssim
e^{\lambda t^p} |k|^2 \nsm \FP \hat{\solU}\nsm^2_{L^2}
+
 t^{p-1} e^{\lambda t^p}
\nsm \ind_{E^c} \{\FI-\FP\}\hat{\solU}\nsm^2_{L^2}.
\end{equation*}
The integrated form of this inequality is 
\begin{multline}
\nsm \{\FI-\FP\}\hat{\solU}(t,k)\nsm^2_{L^2}
\lesssim
e^{-\lambda t^{p}}\nsm \{\FI-\FP\}\hat{\solU}_0(k)\nsm^2_{L^2} 
\\
+   
\int_0^t 
ds ~ e^{-\lambda (t^p-s^p)}
\left( 
|k|^2 \nsm \FP \hat{\solU}\nsm^2_{L^2}
+
\Ac s^{p-1} \nsm \ind_{E^c} \{\FI-\FP\}\hat{\solU}(s,k)\nsm^2_{L^2}
\right).
\label{main.gs.split.lin}
\end{multline}
Here $\Ac \ge 0$, and we take $p>0$, or $0< p^\prime<1$,  so that the integral is finite.

In this case, 
since $\nsm \ind_{E^c} \{\FI-\FP\}\hat{\solU}(s,k)\nsm^2_{L^2}$ is restricted to $E^c(s)$  we have that
\begin{equation}
\label{decay.high.lin}
\nsm \ind_{E^c} \{\FI-\FP\}\hat{\solU}(s,k)\nsm^2_{L^2}
\lesssim
(1+s)^{-2\sigma_{r,m+1}} \CE_{\wE}(0,k),
\end{equation}
where we have used 
$
1  \lesssim w^{\frac{2}{p^\prime} \sigma_{r,m+1} }(v) (1+s)^{-2\sigma_{r,m+1}}
$
on $E^c(s)$ and \eqref{thm.tfli.2}.  

For the soft potentials \eqref{kernelPsing} the proof of Corollary \ref{cor.ls} now follows by multiplying \eqref{main.gs.split.lin} by $|k|^{2m}$, integrating over $\threed_k$, applying \eqref{thm.ls.1.soft} in  Theorem \ref{thm.ls}, and choosing $p^\prime>0$ small enough so that $\epsilon = \epsilon(p^\prime$) is arbitrarily small.  The hard potential \eqref{kernelP} case is similar since then we obtain \eqref{main.gs.split.lin} with $p=1$ and $\Ac=0$.  \end{proof}

This completes our estimates for the time decay of the Cauchy problem for the linearized non cut-off Boltzmann equation \eqref{ls}.  In the next section we explain how to prove these decay rates in the nonlinear regime, for \eqref{Boltz}.

\section{Nonlinear time decay}\label{sec.decayNL}

This section is devoted to the proof of Theorem \ref{thm.ns} and
Corollary \ref{cor}.   But first we prove the energy estimates \eqref{thm.energy.1} and \eqref{thm.energy.2} from Theorem \ref{thm.energy}.  Note that once these energy estimates are established, the rest of the statements in Theorem \ref{thm.energy} can be shown by using the methods elaborated in \cite{gsNonCut0}.

\subsection{Velocity-weighted energy estimates}\label{sec:vwee}
In this subsection, we will prove the velocity-weighted energy estimates in \eqref{thm.energy.1} for solutions to \eqref{Boltz}. The first step is to prove the following unweighted estimate for the norms in \eqref{def.eNm} and \eqref{def.dNm}:
\begin{multline}  
\label{zero.est}
\frac{d}{dt}\sum_{|\al|\leq \dK}C_\al \|\pa^\al \solU(t)\|_{\CL^2}^2
+
\la \noCD_\dK(t)
\\ 
\lesssim 
\sum_{|\al| \leq \dK} \left| \left( \pa^\al \Ga(f,f), \pa^\al f \right) \right|
\lesssim 
\sqrt{\CE_\dK(t)} \CD_\dK(t),
\end{multline}  
where $C_\alpha >0$, $\sum_{|\al|\leq \dK}C_\al \|\pa^\al \solU(t)\|_{\CL^2}^2$ denotes a continuous functional which is comparable to $\sum_{|\al|\leq \dK} \|\pa^\al \solU(t)\|_{\CL^2}^2$, and $\noCD_\dK(t)$ contains no velocity derivatives:
\begin{equation}
\label{dk.no.be}
\noCD_\dK(t)
\eqdef
 \sum_{1\le |\al| \leq \dK}   \| \partial^\alpha \solU(t) \|_{_{N^{s,\gamma}}}^2 
+
 \| \{ {\bf I - P } \} \solU(t) \|_{_{N^{s,\gamma}}}^2. 
\end{equation}
Following the proof of \cite[Theorem 8.4]{gsNonCut0}, we find for suitable solutions to \eqref{Boltz} that there are constants $\delta>0$ and $C_2>0$ such that
\begin{equation}
\sum_{|\alpha| \le \dK} \| \{ {\bf I - P } \} \partial^\alpha \solU(t) \|_{_{N^{s,\gamma}}}^2
\ge 
\delta
\sum_{|\alpha| \le \dK-1} \| \nabla_x{ \bf  P  } \partial^\alpha \solU(t) \|_{_{N^{s,\gamma}}}^2 - C_2\frac{d\mathcal{I}(t)}{dt}.
\label{coerc.m}
\end{equation}
Note that in the whole space we can not use the Poincar{\'e} inequality to obtain the term ${ \bf  P  }f$ in the lower bound.  
Otherwise the rest of the arguments in the proof of \cite[Theorem 8.4]{gsNonCut0} generalize from $\domain_x$ to $\threed_x$.  Furthermore $\mathcal{I}(t)$ is a suitable functional defined precisely in \cite[(8.25)]{gsNonCut0}.  The key property of $\mathcal{I}(t)$ is that it can be absorbed into the energy, for a small $\kappa >0$, as follows
\begin{equation}
 \|\solU(t)\|_{\CH^\dK}^2+\ka~  \mathcal{I}(t) \approx \sum_{|\al|\leq \dK}C_\al \|\pa^\al \solU(t)\|_{\CL^2}^2,
 \quad C_\al > 0.
\label{interact.f}
\end{equation}
Then with \eqref{coerc.m} and \eqref{interact.f} we obtain \eqref{zero.est} with the first upper bound. The proof of this inequality follows exactly the proof in \cite[(8.26)]{gsNonCut0} when $\ell = |\beta| =0$.

To obtain the second upper bound in \eqref{zero.est}, we need to work a bit harder because $\CD_{\dK, \wN}(t)$ in \eqref{def.dNm} does not contain $\FP \solU$.  To overcome this, we {\it claim} that
\begin{equation}
\label{decay.ga}
 \left| \left(  w^{2\wN-2|\beta|}\partial^\alpha_\beta \Ga(\solU,\solU), \partial^\alpha_\beta \{\FI-\FP\}\solU\right) \right| 
 \lesssim 
\sqrt{\CE_{\dK,\wN}(t)} \CD_{\dK,\wN}(t),
\end{equation}
where $\ell \ge 0$ and $|\al| + |\be| \le \dK$ with $\NgE$.  This clearly implies the second upper bound inequality in \eqref{zero.est} since
$
\left(  \Ga(f,f),  f \right)
=
\left(  \Ga(f,f),  \{\FI-\FP\}\solU \right).
$

The first step in our proof of \eqref{decay.ga} is to use the well-known expansion
\begin{equation}
\label{ga.expand}
\Ga(\solU,\solU) =
\Gamma (\FP \solU, \FP \solU)
+
\Ga (\{\FI-\FP\}\solU, \FP \solU)
+
\Ga (\solU, \{\FI-\FP\}\solU).
\end{equation}
The estimate \eqref{decay.ga} for the third term above, 
$
\Ga (\solU, \{\FI-\FP\}\solU),
$
follows directly from \cite[Lemmas 2.2 and 2.3]{gsNonCut0} since 
$\{\FI-\FP\}\solU$ is a part of $\CD_{\dK,\wN}(t)$ from \eqref{def.dNm}.

For the first and second terms in \eqref{ga.expand}, we notice from \eqref{form.p} that
$$
\Ga (\solU, \FP \solU)= \sum_{i=1}^{\Ndim +2} \psi_i(t,x) \Ga (\solU, \phi_i),
$$
where the $\psi_i(t,x)$ are the elements from \eqref{coef.p.def} and the $\phi_i(v)$ are the smooth rapidly decaying velocity basis vectors in \eqref{nullLsp}.  Thus from \cite[Proposition 6.1]{gsNonCut0}:
\begin{multline}\notag
\left| \left(  w^{2\wN-2|\beta|}\partial^\alpha_\beta \Gamma (\{\FI-\FP\} \solU, \FP \solU), \partial^\alpha_\beta \{\FI-\FP\}\solU \right) \right|
\\
\lesssim
\int_{\threed}dx ~ 
\nsm w^{\wN-|\beta|} \partial^\alpha_\beta \{\FI-\FP\}\solU\nsm_{L^2_{\gamma+2s}}
\sum_{\substack{\al_1 \le \al \\ \be_1 \le \be}}
|\pa^{\al_1} [a,b,c]| ~
  \nsm w^{\wN-|\beta|} \partial^{\al-{\al_1}}_{\be_1} \{\FI-\FP\} \solU\nsm_{L^2_{\gamma+2s}}.
\end{multline}
 Here $|[a,b,c]|$ is just the Euclidean square norm of the coefficients from \eqref{coef.p.def}.  Now take the supremum of either $|\pa^{\al_1} [a,b,c]|$ or $\nsm w^{\wN-|\beta|} \partial^{\al-{\al_1}}_{\be_1} \{\FI-\FP\} \solU\nsm_{L^2_{\gamma+2s}}$, whichever contains the largest total number of derivatives, and use the embedding $H^{\ksob}_x \subset L^\infty_x$ for this term and Cauchy-Schwartz for the others to obtain \eqref{decay.ga}.
 
 The last case to consider is $\Gamma (\FP \solU, \FP \solU)$.  From \cite[Proposition 6.1]{gsNonCut0} again
 \begin{multline}\notag
\left| \left(  w^{2\wN-2|\beta|}\partial^\alpha_\beta \Gamma (\FP \solU, \FP \solU), \partial^\alpha_\beta \{\FI-\FP\}\solU \right) \right|
\\
\lesssim
\int_{\threed}dx ~ 
\nsm w^{\wN-|\beta|} \partial^\alpha_\beta \{\FI-\FP\}\solU\nsm_{L^2_{\gamma+2s}}
\sum_{\al_1 \le \al }
|\pa^{\al_1} [a,b,c]| ~|\pa^{\al-\al_1} [a,b,c]|.
\end{multline}
This follows from the rapid decay of the basis vectors in \eqref{nullLsp}.  Now if $|\al|>0$ then we again use
$H^{\ksob}_x \subset L^\infty_x$ and Cauchy-Schwartz to get \eqref{decay.ga}.

However, when $\al =0$, we combine the $L^{2^*}(\threed_x)$ gradient Sobolev inequality, for $\Ndim \ge 3$
and $2^* = \frac{2n}{n-2}$,
with $W^{m,2^*}(\threed_x)\subset L^\infty (\threed_x)$, for $m> \frac{\Ndim}{2}-1$,
 to obtain
\begin{equation}
 \|\solU \|^2_{L^\infty(\threed_x)}
\lesssim
 \|\solU \|^2_{W^{m,2^*}(\threed_x)}
\lesssim
 \|\nabla_x \solU\|^2_{H^{m}(\threed_x)}.
\label{sobolev.trick}
\end{equation}
We can fortunately choose $m+1= \ksob$.    Now take the $L^\infty_x$ norm of 
$| [a,b,c]|$, use \eqref{sobolev.trick} and Cauchy-Schwartz to get \eqref{decay.ga} when $\al =0$ and $\Ndim \ge 3$.
Alternatively, when $\al =0$ and $\Ndim =2$ we use Cauchy-Schwartz combined with the following inequality
$$
\| [a,b,c] \|_{L^4(\R^2_x)}^2
\lesssim
\| [a,b,c] \|_{L^2(\R^2_x)}
\| \nabla_x [a,b,c] \|_{L^2(\R^2_x)}.
$$
We have then shown \eqref{decay.ga} in all the different cases.

In the rest of this subsection we will prove weighted energy estimates with spatial derivatives for the macroscopic part, $\{\FI-\FP\}\solU$ in Step 1.  Then in Step 2 we will prove velocity weighted estimates with both space and velocity derivatives also for $\{\FI-\FP\}\solU$.  A suitable linear combination of these and \eqref{zero.est} will establish \eqref{thm.energy.1}.

\medskip

\noindent{\it Step 1.} We split the solution $\solU$ to equation \eqref{Boltz} into $\solU=\FP \solU + \{\FI-\FP\}\solU$ and take
$\{\FI-\FP\}$ of the resulting equation to obtain
\begin{multline}\label{app.vw.p04}
\pa_t \{\FI-\FP\}\solU + \vel \cdot \na_x \{\FI-\FP\}\solU 
+
\FL\{\FI-\FP\} \solU 
=
\Ga(\solU,\solU)
 \\
-\{\FI-\FP\}(\vel \cdot \na_x \FP \solU )
 +\FP (\vel \cdot \na_x \{\FI-\FP\}\solU).
\end{multline}
For $|\al|\le \dK-1$ we take $\pa^\al$ of \eqref{app.vw.p04}, multiply the result by $w^{2\wN}\pa^\al\{\FI-\FP\}\solU$ for $\wN \ge 0$, and then integrate in $x,\vel$ to achieve:
\begin{equation}
\label{app.vw.p05.int1}
\frac{1}{2}\frac{d}{dt}\|w^{\wN}\pa^\al\{\FI-\FP\}\solU\|_{\CL^2}^2 +
 \left( w^{2\wN}\FL\pa^\al\{\FI-\FP\}\solU,\pa^\al\{\FI-\FP\} \solU\right)
 =
 \Ga_1+ \Ga_2,
\end{equation}
where $ \Ga_1 =  \langle w^{2\wN} \pa^\al\Ga(\solU,\solU),\pa^\al\{\FI-\FP\} \solU\rangle$,
and
\begin{multline*}
 \Ga_2 =
-\left(
\{\FI-\FP\}(\vel \cdot \na_x \FP\pa^\al \solU ),
w^{2\wN} \{\FI-\FP\}\pa^\al\solU
\right)
\\
 +\left(
 \FP (\vel \cdot \na_x \{\FI-\FP\}\pa^\al\solU),
w^{2\wN}  \{\FI-\FP\}\pa^\al\solU  \right).
\end{multline*}
We will estimate each of the three terms in \eqref{app.vw.p05.int1}.

Now from \eqref{decay.ga} we have 
$
\left| \Ga_1 \right| 
\lesssim
\sqrt{\CE_{\dK,\wN}(t)}\CD_{\dK,\wN}(t).
$
Then with Cauchy-Schwartz, we have
$
\left| \Ga_2 \right|
\lesssim
\noCD_\dK(t)
$
from \eqref{dk.no.be}.
Furthermore, from \cite[Lemma 2.6]{gsNonCut0}, we see that
$$
 \left( w^{2\wN}\FL\{\FI-\FP\}\pa^\al \solU,\{\FI-\FP\} \pa^\al\solU\right)
 \ge \la \|  \{\FI-\FP\}\pa^\al\solU\|_{\spacen_{\wN}}^2
 -
 C_\lambda  \|\{\FI-\FP\}\pa^\al\solU\|_{L^2_{\gamma+2s}}^2.
$$
We collect these estimates into \eqref{app.vw.p05.int1} to achieve the final estimate of
\begin{eqnarray}
\frac{1}{2}\frac{d}{dt}\sum_{|\al| \le \dK-1} \|w^{\wN}\{\FI-\FP\}\pa^\al\solU\|_{\CL^2}^2
&+&
\la \sum_{|\al| \le \dK-1}\|\{\FI-\FP\}\pa^\al\solU\|_{\spacen_{\wN}}^2\label{app.vw.p05}
\\
&&\lesssim 
\noCD_\dK(t)+ \sqrt{\CE_{\dK,\wN}(t)}\CD_{\dK,\wN}(t).
\notag
\end{eqnarray}
Similarly, for $|\al | =\dK$ we take $\pa^\al$ of \eqref{Boltz}, multiply the result by $w^{2\wN}\pa^\al \solU$ for $\wN \ge 0$, and then integrate in $x,\vel$ and sum over $|\al | =\dK$ to achieve:
\begin{equation}
\sum_{|\al| = \dK} \left( \frac{1}{2} \frac{d}{dt} \|w^{\wN}\pa^\al\solU\|_{\CL^2}^2
+
\la \|\pa^\al\solU\|_{\spacen_{\wN}}^2\right)\label{app.vw.pp05}
\lesssim 
\noCD_\dK(t)+ \sqrt{\CE_{\dK,\wN}(t)}\CD_{\dK,\wN}(t).
\end{equation}
These are the main energy inequalities in the first step in our proof of \eqref{thm.energy.1}.

\medskip

\noindent{\it Step 2.}  Fix $|\al|+|\be|\leq \dK$ with $|\be|\geq 1$. We apply $\pa^\al_\be$ to \eqref{app.vw.p04} to obtain
\begin{equation}
\pa_t \pa^\al_\be \{\FI-\FP\} \solU +\vel\cdot \na_x \pa^\al_\be \{\FI-\FP\} \solU + 
\pa_\be\FL \pa^\al \{\FI-\FP\} \solU =\pa_\be^\al \Ga(\solU,\solU)+I_1+I_2,\label{app.vw.p07}
\end{equation}
where the functionals $I_1$ and $I_2$ are denoted by
$$
I_1\eqdef-\pa_\be^\al\{\FI-\FP\}(\vel \cdot \na_x \FP \solU )
+\pa_\be^\al\FP (\vel \cdot \na_x \{\FI-\FP\}\solU ),
$$
and for $C_{\be_1}^\be \ge 0$ we have 
$$
  I_2 \eqdef -\sum_{|\beta_1|=1} C_{\be_1}^\be (\pa_{\be_1}\vel)\cdot \na_x  \pa^\al_{\be-\be_1} \{\FI-\FP\} \solU.
$$
We multiply \eqref{app.vw.p07} by $w^{2\wN-2|\beta|} \pa^\al_\be \{\FI-\FP\} \solU$ and integrate over $x,\vel$ to get
$$
\frac{1}{2}\frac{d}{dt}\|w^{\wN-|\beta|}\pa^\al_\be \{\FI-\FP\} \solU\|^2_{\CL^2}
+
\left(w^{2\wN-2|\beta|}\pa_\be\FL \pa^\al \{\FI-\FP\} \solU,\pa^\al_\be \{\FI-\FP\} \solU\right)
=
\sum_{j=1}^3\tilde{I}_j.
$$
Above $\tilde{I}_j \eqdef \left( I_j, w^{2\wN-2|\beta|} \pa^\al_\be \{\FI-\FP\} \solU\right)$ for $j=1,2$ and
$$
\tilde{I}_3 \eqdef  
\left(w^{2\wN-2|\beta|} \pa_\be^\al \Ga(\solU,\solU),\pa^\al_\be \{\FI-\FP\} \solU\right).
$$
From \eqref{decay.ga} we have 
$
\left| \tilde{I}_3 \right| 
\lesssim
\sqrt{\CE_{\dK,\wN}(t)}\CD_{\dK,\wN}(t).
$
Analogous to the estimates in Step 1, 
$
\left| \tilde{I}_1 \right|
\lesssim
\noCD_{\dK}(t)
$
for \eqref{dk.no.be}.
Furthermore from \cite[Lemma 2.6]{gsNonCut0} for a small $\eta>0$
\begin{multline}\notag
\left(w^{2\wN-2|\beta|}\pa_\be\FL (\pa^\al \{\FI-\FP\} \solU),\pa^\al_\be \{\FI-\FP\} \solU\right)
\gtrsim
\| \partial^{\alpha}_{\beta}\{\FI-\FP\} \solU\|_{N^{s,\gamma}_{\ell - |\beta|}}^2   
\\  
-
\eta
\sum_{|\beta_1| \le |\beta|}\| \partial^{\alpha}_{\beta_1} \{\FI-\FP\} \solU\|_{N^{s,\gamma}_{\ell - |\beta_1|}}^2  
-
C     \| \partial^\alpha \{\FI-\FP\} \solU\|_{L^2(B_{C})}^2.
\end{multline}
Lastly considering $\tilde{I}_2$ as in \cite[(8.29)]{gsNonCut0} for any small $\eta'>0$ we see that
$$
\left| \tilde{I}_2 \right| 
\le
\eta'
\|\partial_{\beta}^{\alpha}\{\FI-\FP\}\solU(t)\|_{N^{s,\gamma}_{\ell - |\beta|}}^{2}
+
C_{\eta'}\|\nabla_{x}\partial_{\beta-\beta_{1}}^{\alpha}\{\FI-\FP\}\solU \|_{N^{s,\gamma}_{\ell - |\beta-\beta_1|}}^{2}.
$$
We add together each of these estimates, use a simple induction, and sum to obtain
\begin{multline}
\frac{1}{2}\frac{d}{dt}\sum_{\substack{|\al|+|\be|\leq \dK\\ |\be|\geq 1}}\|w^{\wN-|\be|}\pa^\al_\be \{\FI-\FP\} \solU\|_{\CL^2}^2
+
\la 
\sum_{\substack{|\al|+|\be|\leq \dK\\ |\be|\geq 1}}\|\pa^\al_\be \{\FI-\FP\} \solU\|^2_{N^{s,\gamma}_{\ell - |\beta|}}
\label{app.vw.p08}
\\
\lesssim  \noCD_{\dK}(t)+ \sqrt{\CE_{\dK,\wN}(t)}
\CD_{\dK,\wN}(t).
\end{multline}
This is the third and final estimate which we need to prove \eqref{thm.energy.1}.

We are ready to define the suitable instant energy functional as
\begin{eqnarray*}
\CE_{\dK,\wN}(t)&\eqdef& 
\sum_{|\al|\leq \dK}C_\al \|\pa^\al \solU(t)\|_{\CL^2}^2
+
\kappa_1\sum_{|\al| \le \dK-1} \|w^{\wN}\{\FI-\FP\}\pa^\al\solU\|_{\CL^2}^2
\\
&&
+
\kappa_2\sum_{|\al|= \dK} \|w^{\wN}\pa^\al\solU\|_{\CL^2}^2
+
\kappa_3\sum_{\substack{|\al|+|\be|\leq \dK\\ |\be|\geq 1}}\|w^{\wN-|\be|}\pa^\al_\be \{\FI-\FP\} \solU\|_{\CL^2}^2,
\end{eqnarray*}
for constants $0< \kappa_3\ll \kappa_2\ll \kappa_1\ll 1$ to be chosen small enough.  Notice that \eqref{def.eNm} is satisfied. The sum of  \eqref{zero.est}, \eqref{app.vw.p05}$\times \kappa_1$, \eqref{app.vw.pp05}$\times \kappa_2$ and \eqref{app.vw.p08}$\times \kappa_3$ implies that 
\begin{equation}  \notag
    \frac{d}{dt}\CE_{\dK,\wN}(t)+\la \CD_{\dK,\wN}(t)\lesssim \sqrt{\CE_{\dK,\wN}(t)}\CD_{\dK,\wN}(t).
\end{equation}
Since $\CE_{\dK,\wN}(t)$ will be sufficiently small by continuity, 
we have shown \eqref{thm.energy.1}.   \qed

\subsection{High-order energy estimates}

In this subsection, we will prove the second Lyapunov inequality in Theorem \ref{thm.energy}. Our goal is to construct a high-order instant energy functional $\CE_{\dK, \wN}^{\rm h}(t)$ satisfying \eqref{thm.energy.2} if $\CE_{\dK, \wN}(0)$ is sufficiently small (which we assume throughout this subsection). Due to \eqref{thm.energy.1}, $\CE_{\dK, \wN}(t)$ is also sufficiently small uniformly in time. Recall also the definition \eqref{def.dNm} of $\CD_{\dK, \wN}(t)$.  

\medskip

\noindent{\it Step 1.} From the system \eqref{Boltz} with \eqref{decay.ga} we obtain
\begin{equation}\label{app.ho.p01}
\dis \frac{1}{2}\frac{d}{dt}\sum_{1\leq |\al|\leq \dK}\|\pa^\al \solU\|_{\CL^2}^2+ \la \sum_{1\leq |\al|\leq \dK}
\|\pa^\al\{\FI-\FP\} \solU\|_{\spacen}^2
\lesssim \sqrt{\CE_{\dK}(t)}\CD_{\dK}(t).
\end{equation}
%
In particular we differentiate \eqref{Boltz} with $\pa^\al$, then multiply the result by $\pa^\al \solU$, integrate over 
$x,\vel$ and sum over $1\leq |\al|\leq \dK$ to obtain \eqref{app.ho.p01} using also \eqref{coerc} and \eqref{decay.ga} 
with $\wN =0$.
Multiply equation \eqref{app.vw.p04} by $\{\FI-\FP\}\solU$ to additionally get
\begin{multline}\label{app.ho.p02}
     \frac{1}{2}\frac{d}{dt}\|\{\FI-\FP\}\solU\|_{\CL^2}^2+\la \|\{\FI-\FP\}\solU\|_{\spacen}^2
     \lesssim
      \|\na_x\FP \solU\|_{\CL^2}^2
     +
     \sqrt{\CE_{\dK}(t)}\CD_{\dK}(t),
\end{multline}
and we will furthermore use \eqref{app.vw.p08} with $\wN =0$.

Recall also \eqref{coerc.m}.  Following the proof of \cite[Theorem 8.4]{gsNonCut0} we can define
$$
\tilde{\mathcal{I}}(t) 
\eqdef
\sum_{1\le |\alpha|\le \dK-1}
\left\{
\mathcal{I}_a^\alpha(t)
+
\mathcal{I}_b^\alpha(t)
+
\mathcal{I}_c^\alpha(t)\right\}.
$$
The terms 
$\mathcal{I}_a^\alpha(t)$, $\mathcal{I}_b^\alpha(t)$, and $\mathcal{I}_c^\alpha(t)$ are defined in the proof of \cite[Theorem 8.4]{gsNonCut0}.  Note that $\mathcal{I}(t)$ from \eqref{coerc.m}, as defined in \cite[(8.25)]{gsNonCut0}, is as above except that the sum is instead over
$
|\alpha|\le \dK-1.
$
It is established in the proof of \cite[Theorem 8.4]{gsNonCut0}, similar to just below \cite[(8.25)]{gsNonCut0},  that
\begin{multline}
\label{inner.t}
-\frac{d}{dt} \tilde{\mathcal{I}}(t) 
+
\sum_{1\le |\alpha|\le \dK-1}
\left(
 \| \nabla_x  \pa^\al a \|^2_{L^2_x}
 +
\|\nabla_x \pa^\al b\|^2_{L^2_x}
+ \| \nabla_x  \pa^\al c \|^2_{L^2_x}
\right)
\\
\lesssim
\sum_{|\alpha| \le \dK}\|\{{\bf I-P}\} \partial^\alpha f\|^2_{L^2_{\gamma+2s}}
+
\sqrt{\CE_{\dK}(t)}\CD_{\dK}(t).
\end{multline}
Strictly speaking, to prove \eqref{inner.t} one has to replace the use of \cite[Lemma 8.7]{gsNonCut0} in the proof of 
\cite[Theorem 8.4]{gsNonCut0} with the following estimate for $\NgE$:  
\begin{equation}
\left\| \langle \Gamma(f,f), \Rap_k\rangle\right\|_{H^{K-1}_x}^2
\lesssim
\CE_{\dK}(t)\CD_{\dK}(t).
\label{new.g.est}
\end{equation}
Here $\Rap_k$ are the exponentially decaying velocity basis vectors defined in \cite[(8.10)]{gsNonCut0}.  This is not a problem.  We use (6.12) from \cite[Proposition 6.1]{gsNonCut0} to see that 
\begin{gather*}
\left\| \langle \Gamma(f,f), \Rap_k\rangle\right\|_{H^{K-1}_x}
\lesssim
\sum_{|\alpha |\le K-1}\sum_{\alpha_1 \le \alpha}
\left\|
 \nsm \partial^{\alpha - \alpha_1} f \nsm_{L^2_{-m}} \nsm \partial^{\alpha_1} f \nsm_{L^2_{-m}} 
\right\|_{L^2_x},
\end{gather*}
which holds for any $m\ge 0$.  Then we apply \eqref{sobolev.trick} to the term in the upper bound above with fewer derivatives to obtain \eqref{new.g.est}.  It is furthermore a key point that for any functional $\CE^{\rm h}_{\dK}(t)$ satisfying \eqref{def.eNm.h}, from the proof of  \cite[Theorem 8.4]{gsNonCut0} it can be seen directly that we have the upper bound for $\tilde{\mathcal{I}}(t)$ of
\begin{equation}
\left| \tilde{\mathcal{I}}(t)  \right| \lesssim \CE^{\rm h}_{\dK}(t)\lesssim \CE^{\rm h}_{\dK,\wN}(t).
\label{key.inter.est}
\end{equation}
We will collect each of these last few estimates to prove \eqref{thm.energy.2}.

Now, we are ready to construct $\CE^{\rm h}_{\dK, \wN}(t)$. In fact, let us define
\begin{eqnarray*}
 \CE^{\rm h}_{\dK, \wN}(t) &\eqdef & \sum_{1\leq |\al|\leq \dK}\|\pa^\al \solU\|_{\CL^2}^2+\kappa_1\|\{\FI-\FP\}\solU\|_{\CL^2}^2
 \\
 &&
 +\kappa_2\sum_{\substack{|\al|+|\be|\leq \dK \\ |\be|\geq 1}}  \|w^{\wN-|\be|}\pa^\al_\be \{\FI-\FP\} \solU\|_{\CL^2}^2
 -\kappa_3 \tilde{\mathcal{I}}(t) 
   \\
 &&
 +
\kappa _4 \sum_{|\al| \le \dK-1} \|w^{\wN}\{\FI-\FP\}\pa^\al\solU\|_{\CL^2}^2
 +
\kappa _5 \sum_{|\al| = \dK} \|w^{\wN}\pa^\al\solU\|_{\CL^2}^2,
\end{eqnarray*}
for suitable constants $0<\kappa_5\ll\kappa_4\ll\kappa_3\ll \kappa_2\ll \kappa_1\ll 1$ to
be chosen sufficiently small, where $\tilde{\mathcal{I}}(t) $ satisfies \eqref{inner.t} and  \eqref{key.inter.est}.   Due to \eqref{key.inter.est}, notice that \eqref{def.eNm.h} holds true and so $\CE^{\rm h}_{\dK,\wN}(t)$ is a well-defined high-order instant energy functional. 

By choosing $0<\kappa_5\ll\kappa_4\ll\kappa_3\ll \kappa_2\ll \kappa_1\ll 1$
further small enough, the sum of \eqref{app.ho.p01},
\eqref{app.ho.p02}$\times \kappa_1$, \eqref{inner.t}$\times \kappa_3$,
\eqref{app.vw.p08}$\times \kappa_2$, 
\eqref{app.vw.p05}$\times \kappa_4$,
and
\eqref{app.vw.pp05}$\times \kappa_5$
 yields
\begin{equation}  \label{proof.ineq}
 \frac{d}{dt}\CE_{\dK,\wN}^{\rm h}(t)+\la \CD_{\dK,\wN}(t)
\lesssim
 \|\na_x\FP \solU\|_{\CL^2}^2
+
 \sqrt{\CE_{\dK,\wN}(t)}\CD_{\dK,\wN}(t).
\end{equation}
Here we have additionally added $\|\na_x\FP \solU\|^2$ to both sides of the inequality; while also recalling that $\| w^{\wN} \na_x\FP \solU\|^2 \lesssim\|\na_x\FP \solU\|^2$.  We thus establish the desired estimate \eqref{thm.energy.2} since $\CE_{\dK, \wN}(t)$ is small enough.  \qed

\subsection{The $L^2(\threed_\vel)$ nonlinear estimate}\label{sec.decayEST}  
In this subsection, we will prove the following velocity weighted $L^2(\threed_\vel)$ based norm estimates for \eqref{gamma0}.

\begin{proposition}\label{lem.h1h2}
Fix real numbers $\wB^+$, $\wB^-$, $\wB^\prime \ge 0$ with $\wB^-\ge  \wB^\prime$, and $\wB=\wB^+- \wB^-$.
We then have the the following uniform estimate 
\begin{equation}
\nsm w^{ \wB}\Ga(g,h)\nsm_{L^2} \lesssim 
\nsm w^{ \wB^+ - \wB^\prime} g\nsm_{L^2_{\gamma  + 2s}}  
\nsm w^{ \wB + \wB^\prime} h\nsm_{H^{\id}_{(\gamma+2s)+(\gamma+2s)^+}}.
\label{lem.est.g1}
\end{equation}
This will hold under \eqref{kernelP} or more generally $\gamma > -\Ndim$ combined with $\gamma+2s > -\frac{\Ndim}{2}$.

Furthermore for the soft potentials \eqref{kernelPsing} we have that  
\begin{equation}
\label{lem.est.g2}
\begin{split}
\nsm w^{\wB} \Gamma(g,h)\nsm_{L^2} \lesssim 
\nsm w^{ \wB^+ - \wB^\prime} g\nsm_{H^{\ksob}_{\gamma  + 2s}}  
\nsm w^{ \wB + \wB^\prime}  h\nsm_{H^{\id}_{(\gamma+2s)+(\gamma+2s)^+}},
\\
\nsm w^{\wB} \Gamma(g,h)\nsm_{L^2}
\lesssim 
\nsm w^{ \wB^+ - \wB^\prime} g\nsm_{L^{2}_{\gamma  + 2s}}  
\nsm w^{ \wB + \wB^\prime}  h\nsm_{H^{\id+\ksob}_{(\gamma+2s)+(\gamma+2s)^+}}.
\end{split}
\end{equation}
We again use $\ksob = \lfloor \frac{n}{2} +1 \rfloor$.  Furthermore, above and in the proof below we always have from \eqref{kernelQ} that $\id=1$ if $s\in(0,1/2)$ and $\id=2$ if $s\in [1/2,1)$.
\end{proposition}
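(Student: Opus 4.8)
The plan is to pass to the associated trilinear form by duality and then to discharge every singular factor --- the angular kernel $b(\cos\theta)$ and, in the soft case, the kinetic factor $\Phi(|v-v_*|)$ near the diagonal --- onto $g$ or $h$, never onto the dual function. Testing $w^{\wB}\Ga(g,h)$ against an arbitrary $\Psi\in L^2(\threed_\vel)$, it suffices to bound $\left|\ang{\Ga(g,h),w^{\wB}\Psi}\right|$ uniformly over $\nsm\Psi\nsm_{L^2}\le 1$. Since $\Psi$ then carries no velocity derivatives, the sharp $\spacen$-trilinear estimates of \cite{gsNonCut0} cannot be used directly, and one has to produce their non-sharp Sobolev-type versions, trading regularity of $h$ (and, in the soft case, of $g$) against the order $2s<2$ of the collisional singularity. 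I would start from $\Ga(g,h)(v)=\int_{\threed}dv_*\int_{\sph}d\sigma~\Phi(|v-v_*|)\,b(\cos\theta)\,\MM(v_*)\,[g'_*h'-g_*h]$, valid by \eqref{gamma0} and conservation of energy, so that a Gaussian in $v_*$ is present throughout and only weights in the $v$ variable are at issue.

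Next I would use the decomposition of the trilinear form underlying \cite[Lemmas 2.2 and 2.3]{gsNonCut0} (a Carleman-type representation plus a cancellation term), in which $h$ enters only through the second difference $h'-h$, while $g$ and $\Psi$ enter only through undifferentiated, velocity-weighted $L^2_\vel$ norms or through a convolution kernel built from $g$ and $\MM$. Where $\theta$ is bounded away from $0$, $b(\cos\theta)$ is integrable and Cauchy--Schwarz in $(v_*,\sigma)$ closes the estimate after transferring the output weight by $\ang{v'}\ang{v'_*}\lesssim\ang{v}\ang{v_*}$ and $\ang{v}\lesssim\ang{v'}\ang{v'_*}$. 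Where $\theta$ is small, I would Taylor expand $h'-h$ to order $\id$ --- first order if $2s<1$, second order if $2s\ge 1$, dropping the odd first-order term by sphere symmetry in the second case --- so that $|h'-h|$ is controlled (up to that symmetric term) by $\theta^{\id}|v-v_*|^{\id}$ times $\nsm\pa_v^{\le\id}h\nsm$ on the collisional segment, and the residual $\int_0^{\pi/2}\theta^{\id-1-2s}\,d\theta$ converges. A direct computation of the velocity-weight exponents --- using $\Phi(|v-v_*|)\lesssim\ang{v}^{\gamma}\ang{v_*}^{|\gamma|}$ with the $\ang{v_*}$-losses absorbed by $\MM(v_*)$, and $(\gamma+2s)\le(\gamma+2s)+(\gamma+2s)^+$ --- then yields exactly the weight $(\gamma+2s)+(\gamma+2s)^+$ on $h$; the slack in the hypothesis $\wB^-\ge\wB'\ge 0$ is what permits splitting the output weight $w^{\wB}$ as $w^{\wB^+-\wB'}$ on $g$ and $w^{\wB+\wB'}$ on $h$.

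The only genuinely case-dependent point is the kinetic factor near $v_*=v$. Under \eqref{kernelP}, or more generally when $\gamma>-\Ndim$ and $\gamma+2s>-\tfrac{\Ndim}{2}$, the effective kinetic powers that survive at the diagonal --- $\ind_{|v-v_*|\le 1}|v-v_*|^{\gamma+\id}$ from the Taylor remainder and $\ind_{|v-v_*|\le 1}|v-v_*|^{\gamma+2s}$ from the cancellation term --- are both in $L^2_{\mathrm{loc}}(\threed_{v_*})$, the borderline one being the latter; this is exactly where $\gamma+2s>-\tfrac{\Ndim}{2}$ is used, and then Cauchy--Schwarz in $v_*$ closes everything with $g$ only in $L^2_{\gamma+2s}$ and $h$ only in $H^{\id}$, giving \eqref{lem.est.g1}. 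For the soft potentials \eqref{kernelPsing} with $-\Ndim<\gamma<-2s$ this fails, since $\ind_{|v-v_*|\le 1}|v-v_*|^{\gamma}$ is only in $L^1_{\mathrm{loc}}(\threed_{v_*})$; there I would factor out one $L^\infty_\vel$ norm and invoke Young's inequality $L^1\ast L^2\to L^2$, the $L^\infty_\vel$ bound coming from $H^{\ksob}_\vel\hookrightarrow L^\infty_\vel$ (recall $\ksob>\tfrac{\Ndim}{2}$). Pulling $g$ out in $L^\infty_\vel$ gives the first line of \eqref{lem.est.g2}; pulling $h$ out (on both $h$ and $h'$ after the Taylor step) gives the second, the count $\id+\ksob$ being $\ksob$ from the embedding plus $\id$ from the angular expansion.

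I expect the main obstacle to be precisely this diagonal analysis for the soft potentials, where the two subcases must be organized --- via a short dyadic splitting of $|v-v_*|$ matched against $\theta$ --- subject to the constraint that $\Psi$ is never differentiated, so that every copy of the singular angular kernel and of the singular kinetic factor is absorbed into $g$ or $h$; the concurrent weight bookkeeping under the collisional change of variables (checking that the $\ang{v_*}$-losses are always dominated by $\MM(v_*)$ and that the exponent landing on $h$ is no worse than $(\gamma+2s)+(\gamma+2s)^+$) is the other place where care is needed. The small-$\theta$ analysis in the hard case is routine by comparison.
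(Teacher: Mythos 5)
Your high-level strategy coincides with the paper's --- duality, the Carleman dual representation, cancellations placed on $h$ alone, and the $H^{\ksob}_\vel\hookrightarrow L^\infty_\vel$ embedding to absorb the kinetic singularity for the soft potentials --- but the technical organization is genuinely different. Rather than redo the near-diagonal analysis, the paper expands $\ang{\Gamma(g,h),f}=\opGstar(g,h,f)+\sum_{j\ge 0}\sum_{k\in\Z}(\teePLUSop-\teeSTARop)(g,h_j,f)$ via an isotropic Littlewood--Paley decomposition $h=\sum_j h_j$ combined with a dyadic partition $\chi_k(|v-v'|)$ in the Carleman distance, and then inherits the estimates of \cite[Propositions 3.2--3.7, 4.2--4.6]{gsNonCut0} essentially verbatim after the single substitution $w^{2\ell}(v')\mapsto w^{\weT}(v')w^{-\weT}(v')$, which is what makes negative output weights admissible; the only truly new step is the summation over scales using $2sj=\id j+(\id-2s)j$ with $\id-2s<0$, downgrading the sharp order $2s$ to the integer order $\id$ in the statement. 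Your plan --- a direct Taylor expansion of $h'-h$ to order $\id$ for small $\theta$, a $\theta$/$|v-v_*|$ split, and Young's inequality with an $L^\infty_\vel$ factor in the soft range --- is more elementary and more self-contained, but it leaves implicit several things that the Littlewood--Paley route supplies for free: converting the Taylor remainder (a pointwise bound at an intermediate point of the collisional segment) into the $L^2_{(\gamma+2s)+(\gamma+2s)^+}$-based $H^{\id}_\vel$ norm requires a Minkowski/change-of-variables step you should spell out, since otherwise you only reach a $W^{\id,\infty}_\vel$ control; the $\opGstar$ remainder carries its own kinetic cancellation $1-\Phi(v'-v_*)|v'-v_*|^n/(\Phi(v-v_*)|v-v_*|^n)$ near $|v-v'|\ll|v-v_*|$ which your sketch does not address (the paper handles it via \cite[Propositions 3.4 and 4.4]{gsNonCut0}); and the exact weight $(\gamma+2s)+(\gamma+2s)^+$ landing on $h$ emerges from the specific choices $\weT^+=(\gamma+2s)+\wB^+,\ \weT^-=\wB^-,\ \weT'=\wB'$ in the hard case and $\weT^+=\wB^+,\ \weT^-=1+\wB^-,\ \weT'=1+\wB'$ in the soft case, which your ``direct computation of the weight exponents'' glosses over. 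None of these is a fatal flaw, but they are precisely the points where your $\theta$-based bookkeeping must be reconciled with the Carleman variable $|v-v'|$ and the hyperplane measure, and where the paper instead leans on the $L^2$-friendly form of the existing gsNonCut0 machinery.
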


In the proposition above, we note that $(\gamma+2s)^+ = \max\{ \gamma+2s, 0\}$.  Now, previous estimates of this type can be found in, for example, \cite{ChenHeSmoothing,MR2847536} and the references therein.  
We can not use these because they do not hold in particular under \eqref{kernelPsing}, and they do not allow negative decaying velocity weights at infinity.  
Our estimate above is not-optimal in terms of the order of differentiation, $\id$ instead of $2s$ from \eqref{kernelQ}.  It is also not optimal in terms of the order of the velocity weights, in particular because of the term $(\gamma+2s)+(\gamma+2s)^+$ above.  
However our ability to include negative decaying velocity weights allows us to obtain better dependence on the initial data in Theorem \ref{thm.ns} than would otherwise be possible (without the negative weights above, when $\wB^+ =0$ and $\wB^- > 0$, we would have to ``pay more'' with additional weights  for the decay in Theorem \ref{thm.ns}).  These estimates also have the advantage that they can be proven quickly using the machinery from \cite{gsNonCut0}.

We will use Proposition \ref{lem.h1h2} in two distinct cases. 
Firstly, for \eqref{def.eNm}, we have
\begin{equation}
 \| w^{-\ell}\Gamma(f,f) \|_{\CH^1}
 +
 \| w^{-\ell}\Gamma(f,f) \|_{ Z_1}
 \lesssim
\CE_{\dK}(t).
\label{main.u.e}
\end{equation}
This holds when $\wB^+=0$,  $\wB^- = \ell$ and $\wB^\prime = \ell /2$ where $\ell>0$ is sufficiently large.  The estimate \eqref{main.u.e} then follows from the embedding $H^{\ksob}_x \subset L^\infty_x$ and $\NgE$.
This works for either the hard or the soft potential estimates in Proposition \ref{lem.h1h2}. 

The second case is when $\wB=\wB^+\ge 0$ so that $\wB^- = \wB^\prime=0$ and we have 
\begin{equation}
 \| w^{\wB}\Gamma(f,f) \|_{\CH^{\ksob}}+
  \| w^{\wB}\Gamma(f,f) \|_{ Z_1}
 \lesssim
\CE_{\dK,\wN^\prime}(t).  
\label{main.u.e2}
\end{equation}
This now uses the $L^p-L^q$ Sobolev embeddings as in \cite[Remark of (6.9)]{gsNonCut0}.   
In \eqref{main.u.e2} for the hard potentials \eqref{kernelP} we need $\wN^\prime = b+2(\gamma+2s)$ and for the soft potentials \eqref{kernelPsing} we use $\wN^\prime = b+1$ as a result of the scaling in the weight \eqref{weigh}.

\begin{proof}
We will estimate the operator $\ang{\Gamma(g,h),f}$ and prove Proposition \ref{lem.h1h2} by duality.   We express the collision operator \eqref{gamma0} using its dual formulation from \cite[(A.1)]{gsNonCut0}.  This is explained in \cite[Proposition A.1 and (3.2)]{gsNonCut0}.  The point is that, after a transformation, we can put cancellations on the function $h$ as follows
\begin{equation}
\ang{ \Gamma(g,h),  f }
=
\int_{\threed} dv'   \int_{\threed} dv_* \int_{E_{v_*}^{v'}} d \pi_{v}  
~ \tilde{B} ~ g_*  f'    ~ \left( M_*' h - M_* h' \right)
+
\opGstar.
\label{dualOPdef}
\end{equation}
Here 
$
 v_*' = v+v_* - v'
$
and the kernel $\tilde{B}$ is given by
\begin{equation}
\tilde{B}
\eqdef
2^{n-1}
\frac{B\left(v-v_*, \frac{2v' - v- v_*}{|2v' - v- v_*|}\right) }{  |v'-v_*| ~ |v-v_*|^{n-2}}.
\label{kernelTILDE}
\end{equation}
Furthermore the operator $\opGstar=\opGstar(g,h,f)$ above  does not differentiate:
\begin{equation}
\opGstar
 \eqdef 
 \int_{\threed} dv'  f'  h'  ~    \int_{\threed} dv_*  ~  g_*   M_*  \int_{E_{v_*}^{v'}} d \pi_{v} ~  \tilde{B} 
\left(1 -  \frac{\Phi(v'-v_*) |v' - v_*|^n}{\Phi(v-v_*) |v-v_*|^n}  \right).
 \label{opGlabel}
\end{equation}
In these integrals $d\pi_{v}$ is the Lebesgue measure on the $(\Ndim - 1)$-dimensional hyperplane $E_{v_*}^{v'}$ 
defined by 
  $
  E^{v'}_{v_*} \eqdef \left\{ v\in \threed : \ang{v_*-v', v - v'} =0 \right\},
  $
and $v$ is the variable of integration.  
Furthermore let $\{ \chi_k \}_{k=-\infty}^\infty$ be a partition of unity on $(0,\infty)$ such that $\nsm \chi_k\nsm_{L^\infty} \leq 1$ and 
$\mbox{supp}\left( \chi_k \right)\subset [2^{-k-1},2^{-k}]$.  
For each $k$ we use the notation 
$$
\tilde{B}_k
\eqdef
\tilde{B}  ~ \chi_k (|v - v'|).
$$
Now we record the following decomposed pieces
\begin{equation}
\begin{split}
\teePLUSop(g,h,f)  & 
\eqdef  \int_{\threed} dv' ~   \int_{\threed} dv_* \int_{E_{v_*}^{v'}} d \pi_{v} ~ 
\tilde{B}_k ~  g_*  f' M_*' h, 
\\ 
\teeSTARop(g,h,f) & 
\eqdef
 \int_{\threed} dv' ~   \int_{\threed} dv_* \int_{E_{v_*}^{v'}} d \pi_{v} ~ 
\tilde{B}_k ~ g_* f'   M_* h'.
\end{split}
\notag
\end{equation}
We use the operators $\teePLUSop$ and $\teeSTARop$ without any weights in contrast to \cite{gsNonCut0}.  In \cite{gsNonCut0} the operators $T^{k,\ell}_{+}$, and $T^{k,\ell}_{*}$ are similar except that they include the weight $w^{2\ell}$.

Now we do a standard isotropic Littlewood-Paley decomposition 
as follows.  
Consider $\phi\in C_c^\infty(\threed)$ such that $\phi(\xi) = 0$ for $|\xi| \ge 2$ and $\phi(\xi) = 1$ for $|\xi| \le 1$.
Define $\varphi_0(\xi)=  \phi(\xi)$ and $\varphi_j(\xi)= \phi(2^{-j}\xi)- \phi(2^{-j+1}\xi)$ for $j\ge 1$ so that
$$
\sum_{j=0}^\infty \varphi_j(\xi)=1,\quad \xi\in \threed.
$$
We denote $\psi_j \eqdef \mathcal{F}^{-1}\varphi_j$ and define the notation  ``$g_j$'' 
 as follows 
$$
g_j  \eqdef (g * \psi_j)(\vel) = \int_{\threed }\psi_j(z)g(\vel-z)dz, \quad j\ge 0. 
$$
We thus have the standard expansion 
$
g  = \sum_{j=0}^\infty g_j.
$ 
Furthermore for any $\wN \in \R$ and say $m\in (0,2)$ we have the estimate 
\begin{equation}\label{l.s.equiv}
\left(\sum_{j=0}^\infty
2^{2(m-2)j}
\nsm w^\wN \left| \nabla \right|^2 g_j \nsm_{L^2(\threed_\vel)}^2 \right)^{1/2}
\lesssim
\nsm w^\wN g \nsm_{H^m(\threed_\vel)},
\end{equation}
with $\nabla$ the Euclidean gradient in $\threed$.   
Above and below we will be using the notation
$$
|\nabla|^2 g(\vel) \eqdef 
\max_{0\le j \leq 2}\sup_{|\xi| \leq 1} \left| \left(\xi \cdot \nabla \right)^j g(\vel) \right|.
$$
This will be our main tool to control the Littlewood-Paley sums below.

To proceed with our estimates, we expand the trilinear form as
\begin{align}
\ang{ \Gamma(g,h),f} & = 
 \opGstar(g,h,f)
+
\sum_{j=0}^\infty \sum_{k=-\infty}^\infty (\teePLUSop - \teeSTARop)(g,h_j,f) \nonumber \\
& = 
 \opGstar(g,h,f)
\label{sum1} 
+ \sum_{j=0}^\infty \sum_{k=-\infty}^j (\teePLUSop - \teeSTARop)(g,h_j,f) 
\\
& \hspace{30pt} + \sum_{j=0}^\infty \sum_{k=j+1}^\infty (\teePLUSop - \teeSTARop)(g,h_j,f) \label{sum2}.
\end{align}
As usual, all the sums can be rearranged because of the rapid convergence.

We first give the proof of \eqref{lem.est.g1} using several estimates from \cite{gsNonCut0}.  From \cite[Proposition 3.4]{gsNonCut0}, using 
$
\opGstar(g,h,f)
=
 \opGstar(g, w^{\weT}h,w^{-\weT}f),
$
with $\weT \in \R$ it follows that 
$$
\left| \opGstar(g,h,f) \right|
\lesssim
      \nsm g \FM^\delta \nsm_{L^2} 
  \nsm  h \FM^\delta \nsm_{H^{2s}} 
 \nsm  f \FM^\delta \nsm_{L^2} 
 + 
 \nsm g \nsm_{L^2_{-m}} 
  \nsm w^\weT h\nsm_{L^2_{\gamma}} 
 \nsm w^{-\weT} f\nsm_{L^2_{\gamma}}.
$$
Here $\delta>0$ is a small number, and $m\ge 0$ can be large.  (Note that in \cite[Proposition 3.4]{gsNonCut0}, using 
\cite[Proposition 3.5]{gsNonCut0}, it is legitimate to choose $\epsilon =s\in(0,1)$.)

To estimate the second term in \eqref{sum1} we use 
$
\teeSTARop(g,h,f)
=
\teeSTARop(g, w^{\weT} h, w^{-\weT} f)
$
again. 
Then from 
 \cite[Proposition 3.2]{gsNonCut0}
 we have the estimate
 $$
  \left| \teeSTARop(g,h,f) \right| 
  \lesssim 
  2^{2sk}  \nsm g\nsm_{L^2_{-m}} \nsm w^{\weT} h\nsm_{L^2_{\gamma+2s}}
 \nsm w^{-\weT} f\nsm_{L^2_{\gamma+2s}}.
 $$
 Unfortunately the estimate for $\teePLUSop$ can not use the same trick as easily.  However following 
the proof of \cite[Proposition 3.3]{gsNonCut0} we see that for any $\weT^+, \weT^-, \weT' \ge 0$ with $\weT = \weT^+ - \weT^-$ and $\weT^-\ge \weT'$ we have that
$$
\left|  \teePLUSop(g,h,f)  \right| \lesssim   
2^{2sk} 
 \nsm  w^{\weT^+ - \weT'} g\nsm_{L^2} 
\nsm w^{\weT + \weT'} h\nsm_{L^2_{\gamma + 2s}} 
\nsm w^{-\weT} f\nsm_{L^2_{\gamma + 2s}}.
$$
This can be derived quickly from the proof of \cite[Proposition 3.3]{gsNonCut0} as follows.  When $\gamma + 2s < 0$ as in \eqref{kernelP} in \cite[(3.14)]{gsNonCut0} we replace $w^{2\ell}(v')$ with $w^{\weT}(v') w^{-\weT}(v')$.  Then in the top factor of \cite[(3.15)]{gsNonCut0}  $w^{2\ell}(v')$ is replaced by  $w^{2\weT}(v')$ and in the bottom factor of \cite[(3.15)]{gsNonCut0} $w^{2\ell}(v')$ is replaced by  $w^{-2\weT}(v')$ and the rest of the proof is exactly the same under \eqref{kernelP}.  When alternatively $\gamma + 2s \ge 0$, in \cite[(3.16)]{gsNonCut0}
we can replace $w^{4\ell}(v')$ with $w^{2\weT}(v') w^{-2\weT}(v')$ and we replace $w^{2(\ell+\ell')}(v')$ in both places with 
$w^{2(\weT+\weT')}(v')$.  We put $w^{-2\weT}(v')$ with $|f'|^2$ in \cite[(3.16)]{gsNonCut0} and the rest of the proof does not change. This $\teePLUSop$ estimate is the largest of the two above.

Then for the second term in \eqref{sum1} we obtain the upper bound of
\begin{gather*}
\sum_{j=0}^\infty \sum_{k=-\infty}^j  \left| (\teePLUSop - \teeSTARop)(g,h_j,f) \right|
\lesssim  
\sum_{j=0}^\infty
2^{2sj} 
 \nsm  w^{\weT^+ - \weT'} g\nsm_{L^2} 
\nsm w^{\weT + \weT'} h_j\nsm_{L^2_{\gamma + 2s}} 
\nsm w^{-\weT} f\nsm_{L^2_{\gamma + 2s}}
\\
\lesssim  
 \nsm  w^{\weT^+ - \weT'} g\nsm_{L^2} 
\nsm w^{\weT + \weT'} h\nsm_{H^{\id}_{\gamma + 2s}} 
\nsm w^{-\weT} f\nsm_{L^2_{\gamma + 2s}}.
\end{gather*}
The last inequality used \eqref{l.s.equiv} as well as 
$2sj = \id j +(\id-2s)j$ with \eqref{kernelQ} and $(\id-2s) <0$.

To estimate \eqref{sum2} we have to exploit the cancellations.   Following the proof of \cite[Proposition 3.7]{gsNonCut0} for $\weT \in \R$ we obtain the estimate (for $m \ge 0$ large)
$$
\left| (\teePLUSop - \teeSTARop)(g,h_j,f) \right|
\lesssim 2^{(2s-2)k} \nsm g\nsm_{L^2_{-m}} 
\nsm  w^{\weT} \left| \nabla \right|^2 h_j \nsm_{L^2_{\gamma+2s}}
\nsm w^{-\weT} f\nsm_{L^2_{\gamma+2s}}.  
$$
To prove this estimate with the isotropic Euclidean derivative $\na$ as above follows directly the proof of \cite[Proposition 3.7]{gsNonCut0} except that we use 
$$
h_j(v')-h_j(v)=(v-v')\cdot(\na h_j)(v')+\int_0^1 d \Ctheta ~
(v'-v)\otimes(v'-v):(\na^2 h_j)(\CurveP(\Ctheta)), 
$$
where $\CurveP(\Ctheta)=\Ctheta v'+(1-\Ctheta)v$.  Note that 
$
\int_{E_{v_*}^{v'}} d \pi_{v} \tilde{B}_k ~ (v'-v)\cdot(\na h_j)(v')
=0
$
by symmetry, a longer explanation is given in \cite[(3.41)]{gsNonCut0}.  We further have
\begin{equation}
\left| h_j(v')-h_j(v)-(v-v')\cdot(\na h_j)(v')\right| 
 \lesssim |v-v'|^2 \int_0^1 d \Ctheta  ~ (| \nabla|^2 h_j) (\CurveP(\Ctheta)). 
\label{paradiff2}
\end{equation}
Therefore in the proof of \cite[Proposition 3.7]{gsNonCut0}
we replace the use of \cite[(3.25)]{gsNonCut0} by instead using \eqref{paradiff2}.
The only other difference is that we replace the weight $w^{2\ell}(v')$ with $w^{\weT}(v')w^{-\weT}(v')$.  The factor $w^{\weT}(v')$ follows the function $h_j$ and the factor $w^{-\weT}(v')$ goes with the function $f$.  Otherwise the proof is exactly the same as \cite[Proposition 3.7]{gsNonCut0}.   Now we estimate \eqref{sum2} with the upper bound of
\begin{gather*}
\sum_{j=0}^\infty \sum_{k=j+1}^\infty  \left| (\teePLUSop - \teeSTARop)(g,h_j,f) \right|
\lesssim  
\sum_{j=0}^\infty
 2^{(2s-2)j} \nsm g\nsm_{L^2_{-m}} 
\nsm  w^{\weT} \left| \nabla \right|^2 h_j \nsm_{L^2_{\gamma+2s}}
\nsm w^{-\weT} f\nsm_{L^2_{\gamma+2s}}
\\
\lesssim  
 \nsm   g\nsm_{L^2_{-m}} 
\nsm w^{\weT} h\nsm_{H^{\id}_{\gamma + 2s}} 
\nsm w^{-\weT} f\nsm_{L^2_{\gamma + 2s}}
\\
\lesssim  
 \nsm  w^{\weT^+ - \weT'} g\nsm_{L^2} 
\nsm w^{\weT + \weT'} h\nsm_{H^{\id}_{\gamma + 2s}} 
\nsm w^{-\weT} f\nsm_{L^2_{\gamma + 2s}}.
\end{gather*}
Of course, these inequalities used \eqref{l.s.equiv}.

To prove \eqref{lem.est.g1} it remains to choose the weights.   
First suppose that \eqref{kernelP} holds so that $\gamma + 2s \ge 0$.  In this case from \eqref{weigh} we have that $w(v)= \ang{v}$.  We choose
$$
\weT^+ = (\gamma+2s) +\wB^+,
\quad
\weT^- = \wB^-,
\quad\
\weT' = \wB'.
$$
Alternatively, when $\gamma + 2s < 0$ but $\gamma + 2s > -\frac{n}{2}$ with \eqref{kernelPsing}, since in this case under \eqref{weigh} we have
$
w=\ang{\vel}^{-\gamma-2s},
$
we can choose 
\begin{equation}
\weT^+ =  \wB^+,
\quad
\weT^- = 1+\wB^-,
\quad\
\weT' =1+ \wB^\prime.
\label{exp.neg}
\end{equation}
Then, in either case, collecting these upper bounds we have shown that
\begin{equation}
\left| \ang{\Gamma(g,h),f} \right|
\lesssim
\nsm w^{ \wB^+ - \wB^\prime}  g\nsm_{L^2_{\gamma  + 2s}}  
\nsm w^{ \wB + \wB^\prime}  h\nsm_{H^{\id}_{(\gamma+2s)+(\gamma+2s)^+ }}
\nsm w^{ -\wB}  f\nsm_{L^{2}}.
\notag
\end{equation}
The estimate in \eqref{lem.est.g1} then follows by duality.

We turn to the estimate for \eqref{lem.est.g2} which will hold under \eqref{kernelPsing}.  We sketch the arguments and the estimates that we use below.  The slight modifications are all similar to the case of \eqref{lem.est.g1}.  From \cite[Proposition 4.4]{gsNonCut0} we have for any $\weT\in\R$ that 
\begin{gather*}
\left| \opGstar (g,h,f)   \right|  \lesssim
      \nsm g\nsm_{H^{\ksob}_{-m}} 
  \nsm w^{\weT} h\nsm_{L^2_{\gamma}} 
 \nsm w^{-\weT} f\nsm_{L^2_{\gamma}},
 \label{tstarCf} 
 \\
  \left| \opGstar(g,h,f) \right|   \lesssim  
 \nsm g\nsm_{L^2_{-m}} 
| w^{\weT} h |_{H^{\ksob}_{\gamma}} | w^{-\weT} f |_{L^2_{\gamma}}.
 \label{tstarCh}
\end{gather*}
Similarly from \cite[Proposition 4.2]{gsNonCut0} we have for any $\weT\in\R$ that 
\begin{gather*}
  \left| \teeSTARop(g,h,f) \right|   \lesssim 
  2^{2sk}    \nsm g\nsm_{H^{\ksob}_{-m}} 
  \nsm w^{\weT} h\nsm_{L^2_{\gamma + 2s}} 
 \nsm w^{-\weT} f\nsm_{L^2_{\gamma + 2s}},
 \label{tstarg} 
 \\
  \left| \teeSTARop(g,h,f) \right|   \lesssim 2^{2sk} 
 \nsm g\nsm_{L^2_{-m}} 
| w^{\weT} h |_{H^{\ksob}_{\gamma + 2s}} 
| w^{-\weT} f |_{L^2_{\gamma + 2s}}.
 \label{tstarh}
\end{gather*}
Then again fix  any $\weT^+, \weT^-, \weT' \ge 0$,
with $\weT = \weT^+ - \weT^-$ and   $ \weT' \le \weT^-$.  Then from \cite[Proposition 4.3]{gsNonCut0} we have the uniform estimates 
\begin{gather*}
\left|  \teePLUSop(g,h,f)  \right| \lesssim    
2^{2sk} 
 \nsm w^{\weT^+ - \weT'} g\nsm_{H^{\ksob}} 
\nsm w^{\weT + \weT'} h\nsm_{L^2_{\gamma + 2s}} 
\nsm w^{-\weT} f\nsm_{L^2_{\gamma + 2s}},
 \label{tplussmallN}
\\
\left|  \teePLUSop(g,h,f)  \right| \lesssim    
2^{2sk} 
\nsm w^{\weT^+ - \weT'} g\nsm_{L^2} 
 \nsm w^{\weT + \weT'} h\nsm_{H^{\ksob}_{\gamma + 2s} }
\nsm w^{-\weT} f\nsm_{L^2_{\gamma + 2s}}.
 \label{tplussmall2N}
\end{gather*}
On the other hand from \cite[Proposition 4.6]{gsNonCut0} we have that
\begin{gather*}
\label{cancelh2g1}
 \left| (\teePLUSop- \teeSTARop)(g,h,f) \right| 
   \lesssim 2^{(2s-2)k} 
    \nsm g\nsm_{L^2_{-m}} 
\nsm w^\weT  |\tilde{\nabla}|^2 h \nsm_{H^{\ksob}_{\gamma+2s}} 
  \nsm w^{-\weT} f\nsm_{L^2_{\gamma+2s}},
\\
 \left| (\teePLUSop- \teeSTARop)(g,h,f) \right| 
   \lesssim 2^{(2s-2)k} 
\nsm  g\nsm_{H^{\ksob}_{-m}}
  \nsm w^\weT |\tilde{\nabla}|^2 h \nsm_{L^2_{\gamma+2s}}
  \nsm w^{-\weT} f\nsm_{L^2_{\gamma+2s}}.
  \label{cancelh2g2}
\end{gather*}
The (slight) modifications in the proofs from \cite{gsNonCut0} required to obtain each of these estimates is exactly the same as was explained in the analogous estimate from the prior proof of \eqref{lem.est.g1}.   Following the same summation procedure as in \eqref{lem.est.g1} yields
\begin{gather*}
\left| \ang{\Gamma(g,h),f} \right|
\lesssim
\nsm w^{ \weT^+ - \weT^\prime}  g\nsm_{H^{\ksob}}  
\nsm w^{ \weT + \weT^\prime}  h\nsm_{H^{\id}_{\gamma+2s}}
\nsm w^{ -\weT}  f\nsm_{L^2_{\gamma+2s}},
\\
\left| \ang{\Gamma(g,h),f} \right|
\lesssim
\nsm w^{ \weT^+ - \weT^\prime}  g\nsm_{L^2}  
\nsm w^{ \weT + \weT^\prime}  h\nsm_{H^{\id+\ksob}_{\gamma+2s}}
\nsm w^{ -\weT}  f\nsm_{L^2_{\gamma+2s}}.
\end{gather*}
To finish \eqref{lem.est.g2}, we choose the weights as in \eqref{exp.neg} and again use duality.
\end{proof}

We have now completed all of the Lyapunov energy estimates and $L^2$ norm estimates which are needed.  In the remaining subsections we will use these to prove the time decay estimates from Theorem \ref{thm.ns} and Corollary \ref{cor}.

\subsection{Decay rates of the full instant energy functional}\label{sec.decayNL.1}
This subsection is devoted to the proof of \eqref{thm.ns.2}.  
Recall \eqref{weigh} and \eqref{split.E}, where again $p^\prime\ge 0$ will be chosen later.   From \eqref{def.eNm} let 
$\CE_{\dK,\wN}^{low}(t)$ be the restriction of $\CE_{\dK,\wN}(t)$ to $E(t)$ and similarly define 
$\CE_{\dK,\wN}^{high}(t)$ as the restriction to  $E^c(t)$ in \eqref{split.E}.  We begin by studying only the case of the soft potentials \eqref{kernelPsing}.  Afterwards, we explain how to obtain the hard potential \eqref{kernelP} decay estimates.

From \eqref{def.dNm} and \eqref{def.eNm} for the 
soft potentials \eqref{kernelPsing}, with $p^\prime>0$, we have
\begin{equation}
\frac{\CE_{\dK,\wN}^{low}(t)}{ t^{p^\prime} }\lesssim \frac{\|\FP \solU\|_{\CL^2}^2}{ t^{p^\prime} }+  \CD_{\dK,\wN}(t).
\label{soft.up.d}
\end{equation}
Adding this into \eqref{thm.energy.1}, we therefore conclude 
\begin{equation*}
\frac{d}{dt}\CE_{\dK,\wN}(t)
+
\lambda p t^{p-1} \CE_{\dK,\wN}(t)
\lesssim
t^{p-1}
\left(
\|\FP \solU\|_{\CL^2}^2
+
 \CE_{\dK,\wN}^{high}(t)
 \right),
\end{equation*}
where we defined $\lambda= \frac{\la^\prime}{ p } $ with $p= -p^\prime+1>0$.  Use the factor $e^{-\lambda t^{p}}$ to obtain
\begin{equation}
\CE_{\dK,\wN}(t) 
\lesssim
e^{-\lambda t^{p}}\CE_{\dK,\wN}(0) +   
\int_0^t 
ds ~ e^{-\lambda (t^{p}-s^p)}
s^{p-1}
\left( 
\|\FP \solU\|_{\CL^2}^2
+
\Ac  \CE_{\dK,\wN}^{high}(s)
\right).
\label{main.gs.split}
\end{equation}
We suppose $\Ac\ge 0$ and $p>0$, equivalently $0< p^\prime<1$,  so that the integral is finite. 

In this case, 
since $\CE_{\dK,\wN}^{high}(s)$ is restricted to $E^c(s)$  we have that
\begin{equation}
\label{decay.high}
\CE_{\dK,\wN}^{high}(t)
\lesssim
(1+t)^{-\frac{\Ndim}{2}} \CE_{\dK,\wN+\AsoftI}(0),
\end{equation}
where we have used 
$
1  \lesssim w^{\frac{\Ndim}{2p^\prime}}(v) (1+t)^{-\frac{\Ndim}{2}}
$
on $E^c(s)$ and \eqref{thm.energy.1}.  For Theorem \ref{thm.ns}, note that we can choose
$
p^\prime
$
very close to one so that 
$
\AsoftI \eqdef \frac{\Ndim}{2p^\prime} > \frac{\Ndim}{2}.
$

For now, we leave that be and study the time decay under either \eqref{kernelP} or \eqref{kernelPsing}.
We consider the pointwise time decay estimates on $\|\FP \solU\|_{\CL^2}^2$. Formally, the solution $\solU$ to the Cauchy problem \eqref{Boltz} of the Boltzmann equation can be written as \eqref{ls.semi}
where
$\sourceG$ is given by \eqref{def.g.non}.
We conclude that
\begin{equation}  \label{Boltz.rep}
   \solU(t)=I_0(t)+I_1(t),
\end{equation}
with 
$$
  I_0(t) =  \semiG(t)\solU_0, \quad
  I_1(t) =\int_0^t\semiG(t-s)\Gamma(\solU,\solU)(s)ds.
$$
We recall the norms from \eqref{brief.norm}.  We now apply \eqref{thm.ls.1} with $m=0$, $r=1$ and $\wN=-\wB\le 0$ to be determined 
 to $I_0(t)$ and $I_1(t)$, respectively, to obtain
\begin{equation}  \notag
    \|w^{-\wB}I_0(t)\|_{\CL^2}
    \lesssim
    (1+t)^{-\frac{\Ndim}{4}} \|w^{-\wB+\wE} \solU_0\|_{{\CL^2}\cap Z_1},
\end{equation}
and
\begin{multline*}
    \|w^{-\wB} I_1(t)\|_{\CL^2}\leq \int_0^t\|w^{-\wB}\semiG(t-s)\Gamma(\solU,\solU)(s)\|_{\CL^2}ds
    \\
\lesssim
    \int_0^t(1+t-s)^{-\frac{\Ndim}{4}}\|w^{-\wB+\wE}\Gamma(\solU,\solU)(s)\|_{{\CL^2}\cap Z_1}ds.
\end{multline*}
Under  \eqref{kernelP} we can take $\wE=0$ and for \eqref{kernelPsing} we take any $\wE > \frac{\Ndim}{2}$.  Define
\begin{equation}\label{def.einfty}
    \CE_{\dK,\wN}^\infty(t) \eqdef \sup_{0\leq s\leq t}(1+s)^{\frac{\Ndim}{2}}\CE_{\dK,\wN}(s).
\end{equation}
For $I_1(t)$, from \eqref{main.u.e} and the definition \eqref{def.einfty} of $\CE_{\dK,\wN}^\infty(t)$, it holds that
\begin{multline*}
    \|w^{-\wB} I_1(t)\|_{\CL^2}\lesssim
    \int_0^t(1+t-s)^{-\frac{\Ndim}{4}}\CE_{\dK}(s)ds
    \lesssim
    \int_0^t(1+t-s)^{-\frac{\Ndim}{4}}\CE_{\dK,\wN}(s)ds\\
    \lesssim \CE_{\dK,\wN}^\infty(t)\int_0^t(1+t-s)^{-\frac{\Ndim}{4}}(1+s)^{-\frac{\Ndim}{2}}ds
    \lesssim \CE_{\dK,\wN}^\infty(t)(1+t)^{-\frac{\Ndim}{4}}.
\end{multline*}
Here we have chosen $\wB>0$ sufficiently large and used $\NgE$ in \eqref{main.u.e}.  We have also used the decay estimates for the time integrals as in \cite[Proposition 4.5]{strainSOFT}.

Collecting the estimates on $I_1(t)$ and $I_2(t)$ above, with \eqref{form.p}, implies 
\begin{multline}
\|\FP \solU(t)\|_{\CL^2}^2\lesssim\|w^{-\wB}\solU(t)\|_{\CL^2}^2\lesssim
\|w^{-\wB} I_0(t)\|_{\CL^2}^2
+
\|w^{-\wB} I_1(t)\|_{\CL^2}^2
\\
\lesssim
(1+t)^{-\frac{\Ndim}{2}} \|\solU_0\|_{\CL^2\cap Z_1}^2
+
 (1+t)^{-\frac{\Ndim}{2}}[\CE_{\dK,\wN}^\infty(t)]^2.\label{thm.ns.p2}
\end{multline}
Note that this holds under either either \eqref{kernelP} or \eqref{kernelPsing}.  

Now we plug \eqref{thm.ns.p2} and
\eqref{decay.high} 
into 
\eqref{main.gs.split} to obtain \eqref{thm.ns.2} in Theorem \ref{thm.ns} for the soft potentials \eqref{kernelPsing}
by a bootstrap argument since $\eps_{\dK,\wN+\AsoftI}$ is sufficiently small.  

For the hard potentials \eqref{kernelP}, 
we can replace \eqref{soft.up.d} with 
$$
\CE_{\dK,\wN}(t) \lesssim \|\FP \solU\|_{\CL^2}^2+  \CD_{\dK,\wN}(t).
$$
Then we obtain \eqref{main.gs.split} 
with $p=1$, except that in the upper bound $\Ac =0$.
As in the previous case, we plug in \eqref{thm.ns.p2}
and use a bootstrap argument with $\eps_{\dK,\wN}$ sufficiently small
to obtain \eqref{thm.ns.2} in Theorem \ref{thm.ns} for the hard potentials  \eqref{kernelP}.
 \qed

\subsection{Decay rates of the high-order instant energy functional}\label{sec.decayNL.2} We will now prove the faster time-decay estimate \eqref{thm.ns.3} for the high-order instant energy functional $\CE_{\dK,\wN}^{\rm h}(t)$ from \eqref{def.eNm.h}. We will use \eqref{thm.energy.2} from Theorem \ref{thm.energy}.  As in the last subsection, initially we restrict to the case of the soft potentials \eqref{kernelPsing}.

With \eqref{split.E} and \eqref{def.eNm.h} we define $\CE_{\dK,\wN}^{{\rm h},low}(t)$ to be the restriction of 
$\CE_{\dK,\wN}^{\rm h}(t)$ to $E(t)$ and similarly $\CE_{\dK,\wN}^{{\rm h},high}(t)$ is the restriction to $E^c(t)$.  Then with \eqref{def.dNm}:
\begin{equation}
\label{soft.up.h}
\frac{\CE_{\dK,\wN}^{{\rm h},low}(t)}{ t^{p^\prime} }\lesssim   \CD_{\dK,\wN}(t).
\end{equation}
Now we plug this into \eqref{thm.energy.2}  to conclude that 
\begin{equation}
\notag
    \frac{d}{dt} \CE_{\dK,\wN}^{\rm h}(t)+ \frac{\la^\prime}{ t^{p^\prime} }  \CE_{\dK,\wN}^{{\rm h}}(t)
    \lesssim
    \|\na_x\FP \solU(t)\|_{\CL^2}^2
    +
     \frac{\CE_{\dK,\wN}^{{\rm h},high}(t)}{ t^{p^\prime} }.
\end{equation}
Following the exact procedure used to obtain \eqref{main.gs.split} we achieve
\begin{multline}
\CE_{\dK,\wN}^{\rm h}(t)
\lesssim
e^{-\lambda t^{p}}\CE_{\dK,\wN}^{\rm h}(0) 
\\
+   
\int_0^t 
ds ~ e^{-\lambda (t^{p}-s^p)}
\left( 
\|\na_x\FP \solU(s)\|_{\CL^2}^2
+
\Ac
s^{p-1} 
 \CE_{\dK,\wN}^{{\rm h},high}(s)
\right).
\label{main.h.split}
\end{multline}
Recall $p = -p^\prime +1>0$ and $\Ac \ge 0$.
Again, on $E^c(s)$  we have
\begin{equation}
\label{decay.high.h}
\CE_{\dK,\wN}^{{\rm h},high}(t)
\lesssim
(1+t)^{-\frac{\Ndim+2}{2}} \CE_{\dK,\wN+\Asoftq}(0).
\end{equation}
We similarly used 
$
1  \lesssim w^{\frac{\Ndim+2}{2p^\prime}}(v) (1+t)^{-\frac{\Ndim+2}{2}}
$
on $E^c(s)$, \eqref{thm.energy.1}, and
$
\Asoftq \eqdef \frac{\Ndim+2}{2p^\prime} > \frac{\Ndim+2}{2}.
$

We will pause that line of reasoning for a moment, and consider time decay under both \eqref{kernelP} and \eqref{kernelPsing}.
It follows from Theorem \ref{thm.ls} and \eqref{form.p} for $\wB \ge 0$ that
\begin{multline}
\notag
\|\na_x\FP \solU(t)\|_{\CL^2}^2
\lesssim 
\|w^{-\wB}\na_x\FP \solU(t)\|_{\CL^2}^2
\lesssim 
(1+t)^{-\frac{\Ndim+2}{2}} \|w^{-\wB+\wE}\solU_0\|_{\CHd^{1}\cap Z_1}^2\\
 + \int_0^t (1+t-s)^{-\frac{\Ndim+2}{2}} \|w^{-\wB+\wE} \Gamma(\solU,\solU)(s)\|^2_{\CHd^{1}\cap Z_1}ds.
\end{multline}
Here $\wE$ is defined as in Theorem \ref{thm.ls}.  
We then use \eqref{main.u.e} to see that for $b>0$ sufficiently large one has  
\begin{equation}
 \|w^{-\wB+\wE} \Gamma(\solU,\solU)(s)\|^2_{\CHd^{1}\cap Z_1}
 \lesssim
[\CE_{ \dK}(s)]^2.
\notag
\end{equation}
Now using the notation in Theorem \ref{thm.ns}, we choose $\AsoftI \le \Asoftq$ so that we have $\eps_{\dK,\AsoftI}\le \eps_{\dK,\Asoftq}$ is sufficiently small as in \eqref{def.id.jm}.
We can then plug the upper bound for $\CE_{ \dK}(t)$ in  \eqref{thm.ns.2} into this chain of inequalities.  These estimates yield
\begin{equation}  \label{dr.he.p03}
\|\na_x\FP \solU(t)\|_{\CL^2}^2
\lesssim
\eps_{\dK,\Asoftq} (1+t)^{-\frac{\Ndim+2}{2}}.
\end{equation}
This holds either under \eqref{kernelP} or \eqref{kernelPsing}.  

For the soft potentials \eqref{kernelPsing}, we can plug \eqref{dr.he.p03} and \eqref{decay.high.h} into \eqref{main.h.split} to obtain \eqref{thm.ns.3} in Corollary \ref{cor} where $\varepsilon = \varepsilon(p^\prime)>0$ can be arbitrarily small.  It remains to consider the hard potentials \eqref{kernelP}.  Under \eqref{kernelP}, again, we can replace \eqref{soft.up.h} with 
$$
\CE_{\dK}^{{\rm h}}(t) \lesssim   \CD_{\dK}(t).
$$
Then we can obtain \eqref{main.h.split} with $p=1$ and $\Ac =0$.
As in the previous analysis, we plug in \eqref{dr.he.p03}  with $\eps_{\dK,0}$ sufficiently small and $\Asoftq =0$
to obtain \eqref{thm.ns.3}.
\qed

\subsection{Time decay rates of solutions in $L^r_x$}\label{sec.decayNL.3} In this final subsection, we will prove both 
\eqref{cor.1} from Theorem \ref{thm.ns} and 
\eqref{cor.2} from Corollary \ref{cor}. Notice already that the time decay rates stated in \eqref{cor.1} and \eqref{cor.2} are both true when $r=2$ due to the  assumptions of Corollary \ref{cor}, Theorem \ref{thm.ns} and the definitions \eqref{def.eNm} and \eqref{def.eNm.h} of $\CE_{\dK,\wN}(t)$ and $\CE_{\dK,\wN}^{\rm h}(t)$ respectively.  

It then remains to verify the following {\it claim}:
\begin{equation}\label{dr.inf.0}
  \|\solU(t)\|_{Z_\infty}= \|\solU(t)\|_{L^2_\vel(L^\infty_x)}\lesssim (1+t)^{-\frac{\Ndim}{2}}.
\end{equation}
Once this is done \eqref{cor.1} and \eqref{cor.2} follow from interpolation. We begin with

\begin{lemma}\label{lem.semi.inf}
Using \eqref{ls.semi}, we have the following estimates uniformly in $t\geq 0$:
\begin{equation}\notag
    \|\semiG (t)\solU_0\|_{Z_\infty}
    \lesssim
    (1+t)^{-\frac{\Ndim}{2}}\|w^{\jN}\solU_0\|_{{\CH^{\ksob}}\cap Z_1}.
\end{equation}
Above the weight power is given by any $\jN>\frac{\Ndim}{2}+\ksob$ for the soft potentials  \eqref{kernelPsing} and 
$\jN=0$ for the hard potentials  \eqref{kernelP}.  
Furthermore, it holds that
\begin{equation}\notag
    \|\{\FI - \FP\}\semiG (t)\solU_0\|_{Z_\infty}
    \lesssim
    (1+t)^{-\frac{\Ndim+1}{2}+\epsilon}\|w^{\jN'}\solU_0\|_{{\CH^{\ksob+1}}\cap Z_1}.
\end{equation}
In this case the weight power is given by
$\jN'=0$ (and $\epsilon=0$) for the hard potentials  \eqref{kernelP}.  
 And for the soft potentials  \eqref{kernelPsing} with any small $\epsilon >0$ we have
 $\jN' = \jN'(\epsilon)>0$ is sufficiently large.  We therefore observe that the microscopic part can have faster linear $L^\infty_x$ decay.  
\end{lemma}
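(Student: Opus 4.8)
The plan is to deduce the $Z_\infty = L^2_\vel(L^\infty_x)$ decay from the $\CHd^m$-decay already established in Theorem \ref{thm.ls} (and Corollary \ref{cor.ls}) by an optimized Sobolev embedding in the $x$-variable. First I would recall the sharp inequality of the type $\|g\|_{L^\infty_x} \lesssim \|g\|_{L^2_x}^{1-\theta}\|g\|_{\dot H^{\ksob}_x}^{\theta}$ (the optimized version referenced as \eqref{lem.semi.inf.p1}, in the spirit of \cite{2010arXiv1006.3605D}), valid since $\ksob > \Ndim/2$; applying it inside the $L^2_\vel$-integral and using Minkowski's inequality one gets $\|w^\wN \semiG(t)\solU_0\|_{Z_\infty} \lesssim \|w^\wN \semiG(t)\solU_0\|_{\CL^2}^{1-\theta}\|w^\wN\semiG(t)\solU_0\|_{\CHd^{\ksob}}^{\theta}$ for an appropriate $\theta\in(0,1)$ (with $\theta = \Ndim/(2\ksob)$, chosen so the interpolation exponents balance). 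Then I would plug in Theorem \ref{thm.ls}: the $\CL^2$-factor decays like $(1+t)^{-\sigma_{r,0}}$ and the $\CHd^{\ksob}$-factor like $(1+t)^{-\sigma_{r,\ksob}}$, taking $r=1$. With $\sigma_{1,0}=\Ndim/4$ and $\sigma_{1,\ksob}=\Ndim/4+\ksob/2$, the convex combination gives the exponent $-(1-\theta)\sigma_{1,0}-\theta\sigma_{1,\ksob} = -\Ndim/4 - \theta\ksob/2 = -\Ndim/4 - \Ndim/4 = -\Ndim/2$, which is the claimed rate. The required data norm is $\|w^{\jN}\solU_0\|_{\CH^{\ksob}\cap Z_1}$, where for the soft potentials \eqref{kernelPsing} Theorem \ref{thm.ls} costs an extra weight $\wE>2\sigma_{1,\ksob} = \Ndim/2+\ksob$, giving the stated $\jN > \Ndim/2+\ksob$; for the hard potentials \eqref{kernelP} no extra weight is needed and $\jN=0$.

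For the microscopic part, the argument is identical except that I would use Corollary \ref{cor.ls} in place of Theorem \ref{thm.ls} for the homogeneous Sobolev factor. Interpolating between $\|\{\FI-\FP\}\semiG(t)\solU_0\|_{\CL^2}\lesssim (1+t)^{-\sigma_{1,1}+\epsilon}\|w^{\wE}\solU_0\|_{\CHd^{1}\cap Z_1}$ (the $m=0$ case of Corollary \ref{cor.ls}) and the $m=\ksob$ case $\|\{\FI-\FP\}\semiG(t)\solU_0\|_{\CHd^{\ksob}}\lesssim (1+t)^{-\sigma_{1,\ksob+1}+\epsilon}\|w^{\wE}\solU_0\|_{\CHd^{\ksob+1}\cap Z_1}$ with the same $\theta=\Ndim/(2\ksob)$, the resulting exponent is $-(1-\theta)\sigma_{1,1}-\theta\sigma_{1,\ksob+1}+\epsilon = -\sigma_{1,1}-\theta\ksob/2+\epsilon = -(\Ndim/4+1/2)-\Ndim/4+\epsilon = -(\Ndim+1)/2 + \epsilon$, matching the statement; the data norm becomes $\|w^{\jN'}\solU_0\|_{\CH^{\ksob+1}\cap Z_1}$ with $\jN'=0,\ \epsilon=0$ for the hard potentials and $\jN'=\jN'(\epsilon)$ large for the soft potentials (absorbing both the $\wE$ from Corollary \ref{cor.ls} and the $\epsilon$-loss there).

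The main obstacle — really the only nontrivial point — is justifying the optimized Sobolev interpolation inequality in a way that keeps the constant uniform and, crucially, commutes properly with the $L^2_\vel$-norm: one must apply the scalar inequality pointwise in $\vel$ and then use Minkowski's integral inequality to pull the $L^2_\vel$-norm outside the product of the two $x$-norms raised to powers $1-\theta$ and $\theta$, which requires Hölder in $\vel$ with conjugate exponents $1/(1-\theta)$ and $1/\theta$. After that, everything reduces to inserting the already-proven linear decay rates and computing the convex combination of exponents; the bookkeeping of the velocity weights $w^{\wN}$ through the interpolation is routine since $w^{\wN}$ is $x$-independent and simply rides along on both factors. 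I would also remark, as the lemma does, that choosing $r=1$ is what yields the stated rates, and that any $\partial^\al_x$ with $|\al|=m$ could replace $\CHd^m$ throughout.
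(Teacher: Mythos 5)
Your proposal is correct and proves the lemma, but it uses a slightly different interpolation than the paper. The paper invokes Taylor's Sobolev inequality \eqref{lem.semi.inf.p1} (from \cite{MR1477408}), which bounds $\|\solU^I\|_{L^2_\vel(L^\infty_x)}$ by a product of two $\CL^2$-norms of spatial derivatives of nearly equal order (orders $\ksob$ and $\ksob-1$ for odd $\Ndim$, or $\ksob$ and $\ksob-2$ for even $\Ndim$), each with exponent $1/2$, and then uses Cauchy--Schwarz in $\vel$; you instead interpolate between the extremes $L^2_x$ and $\dot H^{\ksob}_x$ with the balanced exponents $1-\theta$, $\theta$ where $\theta=\Ndim/(2\ksob)$, followed by H\"older in $\vel$ with conjugate exponents $1/(1-\theta)$, $1/\theta$. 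Both are scaling-correct Gagliardo--Nirenberg inequalities, both plug into Theorem \ref{thm.ls} (resp.\ Corollary \ref{cor.ls}) with $r=1$, and both yield exactly the rate $-\Ndim/2$ (resp.\ $-(\Ndim+1)/2+\epsilon$) with the same weight costs $\jN>\Ndim/2+\ksob$ (soft) and $\jN=0$ (hard). The one advantage of your formulation is that it avoids the paper's odd/even case distinction, at the small cost of H\"older in $\vel$ with general conjugate exponents instead of plain Cauchy--Schwarz; the arithmetic you carry out for the convex combination of $\sigma_{1,0}$, $\sigma_{1,\ksob}$ (and for the microscopic part, $\sigma_{1,1}$, $\sigma_{1,\ksob+1}$) is correct, and your accounting of the $\wE>2\sigma_{r,m}$ weight hypothesis from \eqref{thm.ls.1.soft} matches the paper's.
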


Now we see that the nonlinear decay rate in \eqref{cor.2} for $\{\FI-\FP\}\solU(t)$ is not optimal in $Z_\infty$, at least in comparison to the linear decay in $Z_\infty$ from Lemma \ref{lem.semi.inf} above.

\begin{proof}
Set $\solU^I(t)=\semiG (t)\solU_0$. From the Sobolev inequality \cite[Proposition
3.8]{MR1477408}:  
\begin{equation}\label{lem.semi.inf.p1}
\begin{split}
  \|\solU^I\|_{L^2_\vel(L^\infty_x)} 
  &\lesssim 
   \|\na_x^{k+1} \solU^I\|_{\CL^2}^{1/2}\|\na_x^{k} \solU^I\|_{\CL^2}^{1/2}, \quad
   \Ndim = 2k+1,
   \\
     \|\solU^I\|_{L^2_\vel(L^\infty_x)}
     &\lesssim 
   \|\na_x^{k+1} \solU^I\|_{\CL^2}^{1/2}\|\na_x^{k-1} \solU^I\|_{\CL^2}^{1/2},
   \quad
   \Ndim = 2k.
   \end{split}
\end{equation}
Note that in either case $k+1 = \ksob = \lfloor \frac{n}{2} +1 \rfloor$.
We now apply Theorem \ref{thm.ls} with $m$, $r=1$, and $\wN=0$ to observe the following decay rates
\begin{equation*}
    \|\na_x^{m} \solU^I(t)\|_{\CL^2}
    \lesssim
     (1+t)^{-\frac{\Ndim+2m}{4}} \|w^{\wE}\solU_0\|_{{\CHd^{m}}\cap Z_1}.
\end{equation*}
Under \eqref{kernelP} we use $\wE=0$ and with \eqref{kernelPsing} we choose any $\wE>\frac{\Ndim+2m}{2}$.  
Collecting the appropriate estimates for the correct $m$ in \eqref{lem.semi.inf.p1} grants the first estimate in Lemma \ref{lem.semi.inf}.  For the second estimate, use $\solU^I=\{\FI - \FP\}\semiG (t)\solU_0$ and apply Corollary \ref{cor.ls}.  
\end{proof}

Now, by applying Lemma \ref{lem.semi.inf} to the representation \eqref{Boltz.rep} of $\solU(t)$, one has
\begin{eqnarray}
 \nonumber 
  \|\solU(t)\|_{Z_\infty} &\lesssim &  (1+t)^{-\frac{\Ndim}{2}}\|w^{\jN}\solU_0\|_{\CH^{\ksob}\cap Z_1}\\
  &&+\int_0^t(1+t-s)^{-\frac{\Ndim}{2}}\|w^{\jN} \Ga(f,f)(s)\|_{\CH^{\ksob}\cap Z_1} ds.
  \notag
\end{eqnarray}
Then \eqref{main.u.e2} tells us that 
\begin{equation*}
\|w^{\jN} \Ga(f,f)(t)\|_{\CH^{\ksob}\cap Z_1} \lesssim  \CE_{\dK,\lN}(t).
\end{equation*}
Here $\lN$ is defined in \eqref{exponent.ln}.  
Now since we suppose that $\eps_{\dK,\lN+\AsoftI}$ in \eqref{def.id.jm} is sufficiently small, it follows from \eqref{thm.ns.2} that  
\begin{equation*}
    \CE_{\dK,\lN}(t)
    \lesssim
    \eps_{\dK,\lN+\AsoftI}(1+t)^{-\frac{\Ndim}{2}}.
\end{equation*}
Collecting the estimates in this paragraph grants  \eqref{dr.inf.0}.   \hfill {\bf Q.E.D.}

\appendix
\section{Time decay from H{\"{o}}lder and Hausdorff-Young}
\label{secAPP:HHY}

It is now our goal to prove the following inequality (for $t \ge 1$, and $1\leq r \leq 2$):
\begin{equation}
\int_{\threed}d\vel~
 \int_{|k|\leq 1} dk ~ 
 |k|^{2m} h(|k|^2t) ~ |\hat{g}(k,\vel)|^2
 \leq  C(h) (1+t)^{-2\sigma_{r,m}}
\|g\|_{Z_r}^2.
\label{ineqW}
\end{equation}
Here $C(h)>0$ and $\sigma_{r,m}$  is defined in \eqref{rateLIN}.
We make the assumption that
\begin{equation}
\left| h(s) \right| 
\le
C(h)
(1+s)^{-j},
\quad
\forall s \ge 0,
\quad
\exists
j > 2\sigma_{r,m}.
\label{hASSUME}
\end{equation}
Inequalities such as this one can be seen in \cite{MR1057534} and e.g. \cite{MR1379589,arXiv:0912.1742}.  However we further establish \eqref{ineqW} under the more general assumption \eqref{hASSUME}.

To prove \eqref{ineqW} when $r=1$ notice that 
$$
\int_{\threed}d\vel~
\sup_{|k|\leq 1} |\hat{g}(k,\vel)|^2
 \leq  C
\|g\|_{Z_1}^2.
$$
On the other hand 
$$
 \int_{|k|\leq 1} dk ~ 
 |k|^{2m}
 h(|k|^2t)
\le  C(h) t^{-n/2-m}
= C\|h \|_{L^1_{2m}(\threed)}  t^{-\sigma_{1,m}}.
$$
This establishes \eqref{ineqW} for $r=1$.

Now to prove \eqref{ineqW} when $1<r< 2$ we use the following two inequalities.  The H\"{o}lder inequality is of course with $\frac{1}{p}+\frac{1}{q} = 1$ that
$$
\int_{\threed} |g(x) h(x)| ~ dx
\le 
\left(\int_{\threed} |g(x)|^q dx \right)^{1/q}
\left(\int_{\threed} |h(x)|^p dx \right)^{1/p}.
$$
The Hausdorff-Young inequality can be expressed as
$$
\left(\int_{\threed} |\hat{g}(k)|^q dk \right)^{1/q}
\le
C(p)
\left(\int_{\threed} |g(x)|^p dx \right)^{1/p},
$$
which will hold for any 
$1\le p \le 2$
and
 $\frac{1}{p}+\frac{1}{q} = 1$
 or $q=p/(p-1)$.  Now to get back to \eqref{ineqW}, when $1<r< 2$,  we apply the H\"{o}lder inequality as
$$
 \int_{|k|\leq 1} dk ~ 
 |k|^{2m}
 h(|k|^2t) ~ |\hat{g}(k,\vel)|^2
\lesssim
\left( \int_{|k|\leq 1} dk ~ |\hat{g}(k,\vel)|^{r/(r-1)} \right)^{(r-1)/r}
A.
$$
Above we are using the definition
\begin{multline*}
A \eqdef  
\left( 
\int_{|k|\leq 1} dk ~ 
 |k|^{2m\left( \frac{r}{2-r}\right) } \left|  h(|k|^2 t) \right|^{\left( \frac{r}{2-r}\right) }
 \right)^{\frac{2}{r}-1}
 \\
=
 t^{-\left(\frac{n}{2}+ m\left( \frac{r}{2-r}\right) \right)\left( \frac{2}{r}-1 \right)}
 \left( 
\int_{|k|\leq \sqrt{t}} dk ~ 
 |k|^{2m\left( \frac{r}{2-r}\right) } \left|  h(|k|^2) \right|^{\left( \frac{r}{2-r}\right) }
 \right)^{\frac{2}{r}-1}
 \\
 \le
 C(h) t^{-\left(\frac{n}{2}+ m\left( \frac{r}{2-r}\right) \right)\left( \frac{2}{r}-1 \right)}
 =
 C(h) t^{-2\sigma_{r,m}}.
\end{multline*}
The convergence of the above integral with a time dependent domain is guaranteed by \eqref{hASSUME}.  The case $r=2$ then follows directly from 
$$
\sup_{|k|\leq 1} 
 |k|^{2m}
 h(|k|^2t)
 \le 
 C(h) t^{-m}
=
 C(h) t^{-2\sigma_{2,m}}.
$$
This completes the proof of \eqref{ineqW}.

\subsection*{Acknowledgments}  
 The author gratefulfly thanks Keya Zhu for her careful and thorough reading of this paper.

\begin{bibdiv}
\begin{biblist}

\bib{MR2847536}{article}{
   author={Alexandre, R.},
   author={Morimoto, Y.},
   author={Ukai, S.},
   author={Xu, C.-J.},
   author={Yang, T.},
   title={The Boltzmann equation without angular cutoff in the whole space:
   qualitative properties of solutions},
   journal={Arch. Ration. Mech. Anal.},
   volume={202},
   date={2011},
   number={2},
   pages={599--661},
   doi={10.1007/s00205-011-0432-0},
}

\bib{MR575897}{article}{
    author={Caflisch, Russel E.},
     title={The Boltzmann equation with a soft potential. I, II},
   journal={Comm. Math. Phys.},
    volume={74},
      date={1980},
    number={1, 2},
     pages={71\ndash 95, 97\ndash 109},
}

\bib{ChenHeSmoothing}{article}{
   author={Chen, Yemin},
   author={He, Lingbing},
   title={Smoothing effect for Boltzmann equation with full-range interactions},
   journal={arXiv preprint},
   date={Jul 22, 2010},
   eprint={arXiv:1007.3892}
}

\bib{MR2116276}{article}{
   author={Desvillettes, L.},
   author={Villani, C.},
   title={On the trend to global equilibrium for spatially inhomogeneous
   kinetic systems: the Boltzmann equation},
   journal={Invent. Math.},
   volume={159},
   date={2005},
   number={2},
   pages={245--316},
   issn={0020-9910},
}

\bib{D-Hypo}{article}{
   author={Duan, Renjun},
   title={Hypocoercivity of linear degenerately dissipative
kinetic equations},
   journal={preprint},
   date={2009},
    eprint = {arXiv:0912.1733},
}

\bib{MR2420519}{article}{
   author={Duan, Renjun},
   title={On the Cauchy problem for the Boltzmann equation in the whole
   space: global existence and uniform stability in $L\sp 2\sb {\xi}(H\sp
   N\sb x)$},
   journal={J. Differential Equations},
   volume={244},
   date={2008},
   number={12},
   pages={3204--3234},
   issn={0022-0396},
}

\bib{arXiv:0912.1742}{article}{
   author={Duan, Renjun},
   author = {{Strain}, Robert~M.},
    title = {Optimal Time Decay of the Vlasov-Poisson-Boltzmann System in ${\mathbb{R}}^3$}, 
       journal={Arch. Rational Mech. Anal.},
   volume={199},
   date={2011},
   number={1},
   pages={291-328},
   eprint = {arXiv:0912.1742},
   doi={10.1007/s00205-010-0318-6},
}

\bib{2010arXiv1006.3605D}{article}{
   author={Duan, Renjun},
   author = {{Strain}, Robert~M.},
    title = {Optimal Large-Time Behavior of the Vlasov-Maxwell-Boltzmann System in the Whole Space},
       date={June 18, 2010},
       journal={Comm. Pure Appl. Math., in press},
          pages={1-38},
   eprint = {arXiv:1006.3605v2},
}

\bib{MR2357430}{article}{
   author={Duan, Renjun},
   author={Ukai, Seiji},
   author={Yang, Tong},
   author={Zhao, Huijiang},
   title={Optimal decay estimates on the linearized Boltzmann equation with
   time dependent force and their applications},
   journal={Comm. Math. Phys.},
   volume={277},
   date={2008},
   number={1},
   pages={189--236},
}

\bib{MR1379589}{book}{
   author={Glassey, Robert T.},
   title={The Cauchy problem in kinetic theory},
   publisher={Society for Industrial and Applied Mathematics (SIAM)},
   place={Philadelphia, PA},
   date={1996},
   pages={xii+241},
   isbn={0-89871-367-6},
}
		
\bib{gsNonCut0}{article}{
   author={Gressman, Philip T.},
      author={Strain, Robert M.},
   title={Global classical solutions of the Boltzmann equation without angular cut-off},
      journal={J. Amer. Math. Soc.},
   volume={24},
   date={2011},
   number={3},
   pages={771-847},
   doi={10.1090/S0894-0347-2011-00697-8},
    eprint = {arXiv:1011.5441v1},
}


\bib{gsNonCutA}{article}{
   author={Gressman, Philip T.},
      author={Strain, Robert M.},
   title={Global classical solutions of the Boltzmann equation with long-range interactions},
   date={March 30, 2010},
    journal={Proc. Nat. Acad. Sci. U. S. A.},
       volume={107},
   number={13},
   pages={5744-5749},
	eprint={doi: 10.1073/pnas.1001185107}
}


\bib{gsNonCutEst}{article}{
   author={Gressman, Philip T.},
      author={Strain, Robert M.},
   title={Sharp anisotropic estimates for the Boltzmann collision operator and its entropy production},
      journal={Advances in Math.},
   volume={227},
   date={August 20, 2011},
   number={6},
   pages={2349-2384},
   doi={10.1016/j.aim.2011.05.005},
    eprint = {arXiv:1007.1276v1},
}

\bib{MR1946444}{article}{
   author={Guo, Yan},
   title={The Landau equation in a periodic box},
   journal={Comm. Math. Phys.},
   volume={231},
   date={2002},
   number={3},
   pages={391--434},
   issn={0010-3616},
}

\bib{MR2000470}{article}{
   author={Guo, Yan},
   title={The Vlasov-Maxwell-Boltzmann system near Maxwellians},
   journal={Invent. Math.},
   volume={153},
   date={2003},
   number={3},
   pages={593--630},
   issn={0020-9910},
}

\bib{MR2095473}{article}{
   author={Guo, Yan},
   title={The Boltzmann equation in the whole space},
   journal={Indiana Univ. Math. J.},
   volume={53},
   date={2004},
   number={4},
   pages={1081--1094},
   issn={0022-2518},
}

\bib{MR1057534}{article}{
   author={Kawashima, Shuichi},
   title={The Boltzmann equation and thirteen moments},
   journal={Japan J. Appl. Math.},
   volume={7},
   date={1990},
   number={2},
   pages={301--320},
   issn={0910-2043},
}


\bib{MR2322149}{article}{
   author={Mouhot, Cl{\'e}ment},
   author={Strain, Robert M.},
   title={Spectral gap and coercivity estimates for linearized Boltzmann
   collision operators without angular cutoff},
   journal={J. Math. Pures Appl. (9)},
   volume={87},
   date={2007},
   number={5},
   pages={515--535},
   issn={0021-7824},
       eprint = {arXiv:math.AP/0607495},
}

\bib{MR2259206}{article}{
   author={Strain, Robert M.},
   title={The Vlasov-Maxwell-Boltzmann system in the whole space},
   journal={Comm. Math. Phys.},
   volume={268},
   date={2006},
   number={2},
   pages={543--567},
   issn={0010-3616},
}

\bib{strainSOFT}{article}{
    author={Strain, Robert M.},
     title={Asymptotic Stability of the Relativistic {B}oltzmann Equation for the Soft-Potentials},
   journal={Comm. Math. Phys.},
   volume={300},
   date={2010},
   number={2},
   pages={529--597},
   eprint={arXiv:1003.4893v1}
      doi={10.1007/s00220-010-1129-1},
}
		
\bib{MR2209761}{article}{
   author={Strain, Robert M.},
   author={Guo, Yan},
   title={Almost exponential decay near Maxwellian},
   journal={Comm. Partial Differential Equations},
   volume={31},
   date={2006},
   number={1-3},
   pages={417--429},
   issn={0360-5302},
}

\bib{MR2366140}{article}{
   author={Strain, Robert M.},
   author={Guo, Yan},
   title={Exponential decay for soft potentials near Maxwellian},
   journal={Arch. Ration. Mech. Anal.},
   volume={187},
   date={2008},
   number={2},
   pages={287--339},
   issn={0003-9527},
}

\bib{ZhuStrain}{article}{
   author={Strain, Robert M.},
   author={Zhu, Keya},
   title={Large-Time Decay of the Soft Potential relativistic Boltzmann equation in $\R^3_x$}
    journal={Kinetic and Related Models},
   volume={in press},
   date={2012},
 	eprint = {arXiv:1106.1579v1},
}

\bib{MR1477408}{book}{
   author={Taylor, Michael E.},
   title={Partial differential equations. III},
   series={Applied Mathematical Sciences},
   volume={117},
   note={Nonlinear equations;
   Corrected reprint of the 1996 original},
   publisher={Springer-Verlag},
   place={New York},
   date={1997},
   pages={xxii+608},
}

\bib{MR0363332}{article}{
   author={Ukai, Seiji},
   title={On the existence of global solutions of mixed problem for
   non-linear Boltzmann equation},
   journal={Proc. Japan Acad.},
   volume={50},
   date={1974},
   pages={179--184},
   issn={0021-4280},
}

\bib{MR677262}{article}{
   author={Ukai, Seiji},
   author={Asano, Kiyoshi},
   title={On the Cauchy problem of the Boltzmann equation with a soft
   potential},
   journal={Publ. Res. Inst. Math. Sci.},
   volume={18},
   date={1982},
   number={2},
   pages={477--519 (57--99)},
   issn={0034-5318},
   review={\MR{677262 (84h:82048)}},
   doi={10.2977/prims/1195183569},
}

\bib{MR1942465}{article}{
    author={Villani, C{\'e}dric},
     title={A review of mathematical topics in collisional kinetic theory},
 booktitle={Handbook of mathematical fluid dynamics, Vol. I},
     pages={71\ndash 305},
          book={
 publisher={North-Holland},
     place={Amsterdam},
        },
      date={2002},
}

\bib{villani-2006}{article}{
    author={Villani, C{\'e}dric},
  title = {Hypocoercivity},
       pages={iv+141},
          journal={Mem. Amer. Math. Soc.},
                  number={202},
      date={2009},
         eprint = {arXiv:math/0609050v1},
}

\end{biblist}
\end{bibdiv}

\end{document}